\theoremstyle{plain}
\newtheorem{thm}{Theorem}[section]
\newtheorem{prop}[thm]{Proposition}
\newtheorem{cor}[thm]{Corollary}
\newtheorem{lem}[thm]{Lemma}
\newtheorem{rem}[thm]{Remark}
\theoremstyle{definition}
\newtheorem{Def}[thm]{Definition}
\newtheorem{convention}[thm]{Convention}
\title{Time complexity of the conjugacy problem in relatively hyperbolic groups}
\author[I.~Bumagin]{Inna Bumagin}
      \address{ School of Mathematics and Statisics\\
               Carleton University \\
               Ottawa, ON, Canada  K1S 5B6}
      \email{bumagin@math.carleton.ca}
\thanks{Research supported by NSERC grant}
      \subjclass[2010]{ %2000 Classification!
      %20F28, % Automorphism groups of groups
      20F06, %Cancellation theory; application of van Kampen diagrams
%      20E08,   %	Groups acting on trees
      20F65,  % 	Geometric group theory
      20F67,   % Hyperbolic groups and nonpositively curved groups
      57M07,   %	Topological methods in group theory
%      57M10,   %	Covering spaces
%      57M20.  %	Two-dimensional complexes
      05C25   %	Graphs and abstract algebra (groups, rings, fields, etc.)
%      20E26, % Residual properties and generalizations
      %20E22, %Extensions, wreath products, and other compositions
      }
      \keywords{Algorithmic problems, conjugacy problem, relatively hyperbolic groups, time complexity}
\begin{document}

%\date{version of: \today}

\begin{abstract}
If $u$ and $v$ are two conjugate elements of a hyperbolic group then the length of a shortest conjugating element for $u$ and $v$ can be bounded by a linear function of the sum of their lengths, as was proved by Lysenok in~\cite{Lysenok90}. Bridson and Haefliger showed in~\cite{BridsonHaefliger} that in a hyperbolic group the conjugacy problem can be solved in polynomial time. We extend these results to relatively hyperbolic groups. In particular, we show that both the conjugacy problem and the conjugacy search problem can be solved in polynomial time in a relatively hyperbolic group, whenever the corresponding problem can be solved in polynomial time in each parabolic subgroup. We also prove that if $u$ and $v$ are two conjugate hyperbolic elements of a relatively hyperbolic group then the length of a shortest conjugating element for $u$ and $v$ is linear in terms of their lengths.
\end{abstract}

\maketitle

\tableofcontents

\section{Introduction}
%\subsection{Conjugacy problem and Conjugacy Search problem}\label{sec:conjproblem}
The conjugacy problem is one of the three classical algorithmic problems formulated by Max Dehn in 1912. It asks the following. 

\textbf{The Dehn Conjugacy Problem.} Find an algorithm that takes as input a finite presentation of a group $G=\left\langle S\mid \mathcal{R} \right\rangle $ and two freely reduced products of the generators $u,v\in F(S)$, and decides whether or not the elements of $G$, defined by the given products of generators, are conjugate in $G$.

When stated in this generality, the conjugacy problem is known to be unsolvable, which was first shown by Novikov in 1954~\cite{NovikovConj}. The next year Novikov proved in ~\cite{Novikov} that the word problem is unsolvable for finitely presented groups in general, and two years later Boone published his proof in~\cite{Boone}. Therefore, to obtain positive results, one needs to restrict to subclasses of the class of finitely presented groups.

 Gromov stated in his seminal paper~\cite{Gromov} that the conjugacy problem is solvable in hyperbolic groups. The fundamental idea is that if the given elements $u$ and $v$ are conjugate in the group then the length of a shortest conjugating element can be bounded in terms of the lengths of $u$ and $v$. A rough bound on the length of a conjugating element is exponential, and a straightforward algorithm to find a conjugating element (or to verify that the given elements are not conjugate) is double exponential. Later on, a linear bound on the shortest length of a conjugating element was found by Lysenok~\cite[Lemma 10]{Lysenok90} (see also Lemma~\ref{lem:linearbound} in the present paper); this bound leads to an algorithm with exponential time complexity. Better bounds and a much more elegant, polynomial time, algorithm for hyperbolic groups can be found in the book by Bridson and Haefliger~\cite{BridsonHaefliger}; this algorithm is cubic-time. This result was improved by Bridson and Howie in~\cite{BridsonHowie} and then improved even further by Epstein and Holt, who found a linear time algorithm ~\cite{EpsteinHolt}.
 
Closely related to the conjugacy problem is the following question.
 
\textbf{The Conjugacy Search Problem.} Find an algorithm that takes as input a finite presentation of a group $G=\left\langle S\mid \mathcal{R} \right\rangle $ and two freely reduced products of the generators $w_u,w_v\in F(S)$ that define elements $u$ and $v$ conjugate in $G$, and finds a conjugating element $g\in G$ for $u$ and $v$. 
 
Clearly, in a countable group with solvable word problem the conjugacy search problem can be solved by enumerating the group elements $g_1,g_2,\dots$ and deciding for every $i=1,2,\dots$ whether or not the equality $g_iug_i^{-1}v^{-1}=1$ holds in $G$. A conjugating element will be found, eventually. In a hyperbolic group, due to the linear bound on the length of a conjugating element, the straightforward procedure is exponential time. 

Both the conjugacy problem and the conjugacy search problem can be stated in the language of equations over groups. Namely, the conjugacy problem asks to find out whether the orientable quadratic equation $xux^{-1}=v$ has a solution in $G$. The conjugacy search problem asks to find a solution, when we know that it exists. The estimates obtained in~\cite{Lysenok90} and in~\cite{BridsonHaefliger}  for the length of a conjugating element can be compared with the estimate of $O(n^4)$ on the length of a minimal solution to a system of orientable quadratic equations over a torsion-free hyperbolic group, obtained by Mohajeri in her Ph.D. thesis~\cite{Mohajeri}, and with the linear estimates for the length of solutions to quadratic equations in free groups, due to Kharlampovich and Vdovina~\cite{KVLinearEst}.  At the same time, one notes the striking difference between the time complexity obtained in~\cite{EpsteinHolt} and the result due to Kharlampovich, Mohajeri, Taam and Vdovina~\cite{KMTV} that solving quadratic equations in hyperbolic groups is NP-hard.

% % % % % % % % % % % % % % % %
We consider the class of relatively hyperbolic groups, defined in Section~\ref{sec:defnsnotation}. In this class, a reasonable question is as follows.

\textbf{The Conjugacy Problem for Relatively Hyperbolic Groups.} Find an algorithm that takes as input 
\begin{itemize}
\item a finite relative presentation of a group $G=\left\langle S_0,P_1,P_2,\dots,P_m\mid \mathcal{R} \right\rangle $, hyperbolic relative to the set of subgroups $\mathcal{P}=\{P_1,P_2,\dots,P_m\}$, and finite generating sets $S_1,S_2,\dots, S_m$ such that $P_i=\left\langle S_i\right\rangle,\ \forall i $,
\item solutions to the word problem and to the conjugacy problem in each one of the subgroups in $\mathcal{P}$ and 
\item two freely reduced products of the generators $w_u,w_v\in F(S)$, where $S=\cup_{i=0}^m S_i$,
\end{itemize}  
 and decides whether or not the elements $u,v\in G$, defined by the products $w_u$ and $w_v$, are conjugate in $G$.

Solvability of the conjugacy problem in relatively hyperbolic groups, also declared by Gromov in~\cite{Gromov}, was proved by the author in~\cite{BumaginConj},~ and also by Osin in~\cite{OsinRelHypGps} for hyperbolic elements. Both papers provide estimates that lead to inefficient algorithms. Better estimates on the length of conjugating elements in relatively hyperbolic groups were obtained by Ji, Ogle and Ramsey, who gave a polynomial bound on the length in~\cite{JiOgleRamsey}. The degree of the polynomial bound was improved considerably by O'Connor in~\cite{OConnor}.

A key to obtaining a polynomial time algorithm to solve the conjugacy problem is an observation due to Bridson and Haefliger~\cite{BridsonHaefliger} that we generalize in Theorem~\ref{thm:upperboundconst}(1). Essentially, it says the following. One can compute shorter representatives $\bar{w}_u\in[u]_G$ and $\bar{w}_v\in[v]_G$ of the conjugacy classes of the given elements $u$ and $v$ and obtain a bound on the length of an element $g$ conjugating $\bar{w}_u$ and  $\bar{w}_v$. In particular, if $u$ and $v$ are hyperbolic elements (see Definition~\ref{def:parabolicelt}) conjugate in $G$, then the length of $g$ will be bounded by a universal constant which is independent of $u$ and $v$. As a consequence of Theorem~\ref{thm:upperboundconst}(1) and Proposition~\ref{cor:Lemma431}, we obtain the following linear estimate on the length of a conjugating element.
\begin{thm}\label{thm:IntroEstimate} Let $G$ be a group, hyperbolic relative to the set of subgroups $\mathcal{P}=\{P_1,\dots,P_m\}$. Let $S=\cup_{i=0}^{m} S_i$ be a finite generating set for $G$ such that $P_i=\left\langle S_i\right\rangle $ for $i=1,2,\dots,m$. Let $F(S)$ be the free group on $S$. Let $u,v\in G$ be defined by products of generators $w_u,w_v\in F(S)$. Suppose that $u$ and $v$ are hyperbolic elements conjugate in $G$. Then there is a conjugating element $g\in G$ such that $|g|_S\leq N(\delta)(|w_u|_S+|w_v|_S)+M(\delta)$, where $N(\delta)$ and $M(\delta)$ are constants, computable from the given presentation of $G$.
\end{thm}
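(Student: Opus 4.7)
The plan is to factor a conjugator $g$ satisfying $g^{-1}ug = v$ through short conjugacy representatives $\bar u$ of $u$ and $\bar v$ of $v$, writing $g = h_u\, g_0\, h_v^{-1}$, where $h_u$ conjugates $u$ to $\bar u$, $h_v$ conjugates $v$ to $\bar v$, and $g_0$ conjugates $\bar u$ to $\bar v$. I would bound the outer two pieces linearly in $|w_u|_S$ and $|w_v|_S$ by means of Proposition~\ref{cor:Lemma431}, and the middle piece by a universal constant by means of Theorem~\ref{thm:upperboundconst}(1); the latter is the step where the hypothesis that $u$ and $v$ are hyperbolic is essential, since it is precisely what prevents the conjugator between the two short representatives from acquiring a length linear in $|w_u|_S + |w_v|_S$.

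Concretely, I would first apply Proposition~\ref{cor:Lemma431} to produce elements $h_u, h_v \in G$ and conjugacy representatives $\bar u := h_u^{-1} u h_u$, $\bar v := h_v^{-1} v h_v$ satisfying $|h_u|_S \le c(\delta)\,|w_u|_S$ and $|h_v|_S \le c(\delta)\,|w_v|_S$ for a constant $c(\delta)$ computable from the presentation, with $\bar u$ and $\bar v$ short in a sense compatible with the second input. Since hyperbolicity of an element is conjugacy-invariant, $\bar u$ and $\bar v$ are again hyperbolic, and they remain conjugate in $G$. Theorem~\ref{thm:upperboundconst}(1) then supplies a conjugator $g_0 \in G$ with $g_0^{-1}\bar u g_0 = \bar v$ and $|g_0|_S \le K(\delta)$ for a constant $K(\delta)$ depending only on the given data.

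Setting $g := h_u\, g_0\, h_v^{-1}$, a direct substitution yields $g^{-1}ug = v$, and the triangle inequality for word length gives
\[
|g|_S \le |h_u|_S + |g_0|_S + |h_v|_S \le c(\delta)\bigl(|w_u|_S + |w_v|_S\bigr) + K(\delta),
\]
which is the required estimate with $N(\delta) = c(\delta)$ and $M(\delta) = K(\delta)$. The substantive difficulty has been absorbed into the two cited inputs: Proposition~\ref{cor:Lemma431} is a cyclic-reduction-type shortening adapted to the relative setting, in which one must track how parabolic syllables of $w_u$ interact with short conjugations in the relative Cayley graph, while Theorem~\ref{thm:upperboundconst}(1) bounds a conjugator between two short hyperbolic representatives using thin-quadrilateral geometry together with the existence of a periodic quasi-axis for a hyperbolic element. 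Granted those two inputs, the theorem above reduces to the concatenation and triangle-inequality step just displayed, and the only genuinely new thing to verify is the elementary observation that $\bar u$ and $\bar v$ inherit hyperbolicity from $u$ and $v$.
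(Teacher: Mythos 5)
Your proposal is correct and follows essentially the same route as the paper, which obtains Theorem~\ref{thm:IntroEstimate} precisely as a consequence of Proposition~\ref{cor:Lemma431} (linear-length conjugators $h_u,h_v$ onto cyclic relative $(8\delta+1)$-local geodesic representatives) and Theorem~\ref{thm:upperboundconst}(1) (a constant bound on the middle conjugator for hyperbolic elements). The one detail worth making explicit is that Theorem~\ref{thm:upperboundconst}(1) bounds a shortest conjugator between \emph{cyclic permutations} of $\bar u$ and $\bar v$ rather than between $\bar u$ and $\bar v$ themselves, so $g_0$ must also absorb the prefixes realizing those cyclic permutations; since these prefixes have $\Gamma$-length at most $C(2)|w_u|_S$ and $C(2)|w_v|_S$ by Corollary~\ref{cor:lengthofrho}, the estimate remains linear and only the constant $N(\delta)$ changes.
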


If $u$ and $v$ are parabolic elements (see Definition~\ref{def:parabolicelt}) conjugate in $G$ then estimates on the length of a shortest conjugating element will depend on the corresponding estimates in the parabolic subgroups of $G$. Note that parabolic subgroups of $G$ do not have to belong to the same class as abstract groups. A  general statement is given in Theorem~\ref{thm:upperboundconst}(2). For some classes of parabolic subgroups our estimates can be made more precise, as we discuss below.

The following theorem refers to a particular case of relatively hyperbolic groups with abelian parabolic subgroups. These groups have been attracting a great deal of attention lately; we just mention some examples. For instance, $\mathbb{R}^n$-free groups are hyperbolic relative to abelian subgroups, which was proved by Guirardel in~\cite{Guirardel}, using a theorem proved by Dahmani in~\cite{Dahmani}. Also, finitely generated groups acting properly and cocompactly on $CAT(0)$ spaces with isolated flats are hyperbolic relative to abelian subgroups; we refer the interested reader to Hruska's thesis~\cite{HruskaThesis} and to the paper by Hruska and Kleiner~\cite{HruskaKleiner} for details. It follows from the results of Kharlampovich and the author~\cite{BumaginKharlampovich} that $\mathbb{Z}^n$-free groups belong to the class defined by Hruska. An important subclass of $\mathbb{Z}^n$-free groups consists of finitely generated fully residually free groups, also known as limit groups introduced by Sela, who also proved the equivalence of the two definitions in~\cite{SelaMRDiagrams01}; these groups were shown to be  $\mathbb{Z}^n$-free by Kharlampovich and Miasnikov in~\cite{KharlampovichMyasnikov98}. Alternatively, the above inclusions for limit groups follow from the combination theorems for relatively hyperbolic groups proved by Alibegovi\v{c} in~\cite{Alibegovic} and by Dahmani in~\cite{Dahmani}, and from the theorem proved by Alibegovi\v{c} and Bestvina in~\cite{AlibegovicBestvina}.

If $G$ is a group hyperbolic relative to abelian subgroups then, according to Theorem~\ref{thm:EstimateAbelian} below, a linear bound on the length of a conjugating element applies to arbitrary $u,v\in G$. Theorem~\ref{thm:EstimateAbelian} follows from Theorem~\ref{thm:upperboundconst} and Remark~\ref{rem:conjparabolic}.
\begin{thm}\label{thm:EstimateAbelian}
Let $G$ be a finitely generated group, hyperbolic relative to the set of abelian subgroups $\mathcal{P}=\{P_1,\dots,P_m\}$. Let $S=\cup_{i=0}^{m} S_i$ be a finite generating set for $G$ such that $P_i=\left\langle S_i\right\rangle $ for $i=1,2,\dots,m$. Let $F(S)$ be the free group on $S$. Let $u,v\in G$ be defined by products of generators $w_u,w_v\in F(S)$. If $u$ and $v$ are conjugate in $G$ then there is a conjugating element $g\in G$ such that $|g|_S\leq N(\delta)(|w_u|_S+|w_v|_S)+M_{ab}(\delta)$, where $N(\delta)$ and $M_{ab}(\delta)$ are constants, computable from the given presentation of $G$.
\end{thm}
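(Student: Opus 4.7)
The plan is to invoke the dichotomy supplied by Theorem~\ref{thm:upperboundconst}, which splits any pair of conjugate elements into the hyperbolic case and the parabolic case, and then to specialize the parabolic case to abelian parabolics using Remark~\ref{rem:conjparabolic}. Since being hyperbolic (resp.\ parabolic) is invariant under conjugation in $G$, the conjugates $u$ and $v$ must be of the same type, so only these two cases arise.

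In the hyperbolic case, Theorem~\ref{thm:IntroEstimate} (itself a consequence of Theorem~\ref{thm:upperboundconst}(1)) immediately yields a conjugating element of length at most $N(\delta)(|w_u|_S+|w_v|_S)+M(\delta)$, which is of the desired form with $M_{ab}(\delta)\geq M(\delta)$. So the real content lies in the parabolic case.

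For the parabolic case I would apply Theorem~\ref{thm:upperboundconst}(2), which one expects to produce a conjugator $g$ of the shape $g = g_1\, h\, g_2$, where $|g_1|_S,|g_2|_S$ are linear in $|w_u|_S+|w_v|_S$ with coefficients depending only on $\delta$, while $h$ is an element of some parabolic subgroup $P_i$ that realizes a conjugation between the reduced representatives of $u$ and $v$ inside $P_i$. In general the length of $h$ depends on the conjugator complexity of $P_i$; however, Remark~\ref{rem:conjparabolic} applies because each $P_i$ is abelian, so two elements of $P_i$ conjugate inside $P_i$ must in fact be equal, and therefore $h$ may be taken to be trivial. Substituting $h = 1$ leaves only the linear contributions from $g_1$ and $g_2$, which can be absorbed into constants $N(\delta)$ and $M_{ab}(\delta)$ depending only on the given relative presentation of $G$.

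The main obstacle I anticipate is bookkeeping: one must ensure that the additive constants arising from the reductions in Theorem~\ref{thm:upperboundconst}(2) (in particular, from conjugating the given words into their parabolic representatives) depend only on $\delta$ and the fixed generating set $S$, and not on the individual words $w_u,w_v$ or on any conjugator complexity internal to the $P_i$. Once this is verified, the abelian hypothesis is precisely what is needed to eliminate the only term that could depend on the parabolic subgroups, yielding a single uniform constant $M_{ab}(\delta)$ that works for both the hyperbolic and the parabolic branches.
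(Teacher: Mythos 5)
Your proposal is correct and follows essentially the same route as the paper, which derives Theorem~\ref{thm:EstimateAbelian} directly from Theorem~\ref{thm:upperboundconst} together with Remark~\ref{rem:conjparabolic} (with the linear term coming from the reduction to cyclic local geodesic representatives in Proposition~\ref{cor:Lemma431}). You correctly identify the key point: for abelian parabolics the quantities $K_i$, $M_u$, $M_v$ all vanish because conjugacy inside $P_i$ is equality, so the only potentially uncontrolled contribution to the conjugator disappears.
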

Our solution to the conjugacy problem uses Farb's solution to the word problem in relatively hyperbolic groups.
\begin{thm} \label{thm:Farb} \cite[Theorem 3.7]{Farb}
Suppose $G$ is a group hyperbolic relative to the set of subgroups $\mathcal{P}=\{P_1,P_2,\dots,P_m \}$, defined by a Dehn presentation, and suppose $P_i$ has word problem solvable in time $O(f(n))$, $\forall i$. Then there is a curve-shortening algorithm that gives an $O(f(n)\log n)$-time solution to the word problem for $G$.
\end{thm}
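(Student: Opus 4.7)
The plan is to implement a generalized Dehn algorithm. The defining property of a relative Dehn presentation is that any freely reduced $w\in F(S)$ with $w=_G 1$ contains a subword $u$ of one of two types: (a) $u$ is more than half of some finite relator $r\in\mathcal{R}$, and so admits a strictly shorter replacement $r_1$ with $ur_1=_G 1$; or (b) $u$ lies entirely in some parabolic generating set $S_i$ and $|u|_{S_i}<|u|_S$. In either case $u$ is replaced by a shorter equivalent and the process iterates until either the word is empty (equivalent to $w=_G 1$) or no such $u$ exists (whence $w\ne_G 1$).

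First I would record the Dehn property itself as a lemma, deducing it from $\delta$-hyperbolicity and the bounded coset penetration property of the coned-off Cayley graph $\hat\Gamma(G)$: a geodesic bigon in $\hat\Gamma(G)$ with one side labelled $w$ must be thin, forcing a long substring of $w$ either to match a bounded-length relator (case (a)) or to label a path through a single parabolic coset whose $\hat\Gamma$-length is at most one (case (b)). This reduction is purely geometric and contains no algorithmic content.

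For the algorithmic side, I would maintain $w$ in a balanced binary tree of letters supporting substring access, splice, and delete in $O(\log n)$ amortized time, together with a list of pointers to the maximal $S_i$-subwords of $w$ for each $i$. Each reduction alters only a bounded neighborhood of positions, so updating these auxiliary structures costs $O(\log n)$ per step. Detection of type (a) is a constant-length scan, hence $O(\log n)$; detection of type (b) reduces, via enumeration of short $S_i$-words, to a bounded number of $P_i$-word problem queries on inputs of length $O(|u|)$, and so costs $O(f(|u|))$. Since each reduction strictly shortens $w$, there are at most $n$ reductions overall, and a careful amortization — keyed to the observation that the cumulative length of inputs passed to the parabolic oracles is $O(n\log n)$ rather than $O(n^2)$ — yields the claimed $O(f(n)\log n)$ bound.

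The main obstacle is precisely this amortized analysis for case (b): one must charge each oracle call of length $\ell$ against letters being permanently removed, invoking bounded coset penetration to ensure that no letter participates in type-(b) queries across more than $O(\log n)$ distinct reduction steps. The naive bound of one oracle call per reduction on (essentially) the full remaining word would yield only $O(nf(n))$, losing the logarithmic improvement; threading the geometry through the data-structural accounting is where the difficulty lies.
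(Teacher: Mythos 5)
First, a point of context: the paper does not prove Theorem~\ref{thm:Farb} at all --- it is imported verbatim from Farb, and the only in-paper material resembling a proof is the sketch in Lemma~\ref{lem:Farb}, which itself defers the $O(f(n)\log n)$ bound back to \cite[Theorem 3.7]{Farb} with the remark that the worst case is a path that backtracks often. Your overall strategy (a relative Dehn algorithm: repeatedly shorten either by a more-than-half piece of a finite relator or by collapsing/shortening parabolic subwords via the $P_i$-oracles) is the same curve-shortening scheme as Farb's and as the sketch in Lemma~\ref{lem:Farb}, so at the level of architecture you are on the right track.

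There are, however, two genuine gaps. The first is in your case (b). As stated, you want to detect a parabolic subword $u$ over $S_i$ admitting a strictly shorter $S_i$-representative; but a word-problem oracle for $P_i$ running in time $O(f(n))$ does not let you compute geodesic (or even merely shorter) representatives of arbitrary parabolic words within the stated time budget --- enumerating candidate replacements of length $|u|-1$ is exponential in $|u|$. What the algorithm actually needs, and what Lemma~\ref{lem:Farb} does, is much weaker: a parabolic component contributes relative length $1$ no matter how long it is over $S_i$, so one only tests whether a maximal parabolic component is trivial or equals one of the \emph{finitely many precomputed} short elements (the list $\mathcal{L}_1$ of parabolic elements of $\Gamma$-length at most $C(2)$); this is a bounded number of oracle calls per component. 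Longer components are left untouched. Your ``enumeration of short $S_i$-words'' gestures at this, but your reduction rule $|u|_{S_i}<|u|_S$ is not implementable from the given data. The second and more serious gap is the amortization: the entire content of the $O(f(n)\log n)$ bound is the claim that the cumulative length of inputs fed to the parabolic oracles is $O(n\log n)$, i.e.\ that each letter participates in at most $O(\log n)$ merge-and-requery events when same-coset components become adjacent after a deletion. You assert this and say it follows from bounded coset penetration, but you give no argument, and BCP by itself is a statement about quasi-geodesics without backtracking, not about the combinatorics of the merge tree produced by the algorithm; some additional accounting (e.g.\ a smaller-half charging scheme on the merge forest) is required and is precisely the non-routine step. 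As written, your proposal establishes only the naive $O(nf(n))$ bound, which is weaker than the theorem claims.
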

The curve-shortening algorithm is designed to obtain a relative $k$-local geodesic representing a given element of $G$. We sketch Farb's argument in Lemma~\ref{lem:Farb} and in Proposition~\ref{cor:Lemma431} adapt it to obtain cyclic relative $k$-local geodesics.

The conditions of Theorem~\ref{thm:Farb} could be relaxed to allow the group $G$ be defined by an arbitrary finite presentation; the word problem in $G$ will remain solvable. This is due to Dahmani~\cite[Theorem 0.1]{DahmaniFindingRelHypStructures}, who gave an algorithm to compute a relative Dehn presentation for $G$ and a factor $N$ for a relative linear isoperimetric inequality for $G$ from a finite presentation for $G$, generating sets $S_1,S_2,\dots,S_m$ for the parabolic subgroups and solution to the word problem in the parabolic subgroups (see Theorem~\ref{prop:precomputation} for details). Then one can find a hyperbolicity constant $\delta$, also used in Farb's curve-shortening algorithm.  However, the complexity of those additional computations exceeds by far the complexity of the  solution to the word problem in $G$. This is why we also assume that $G$ is defined by a relative Dehn presentation when discussing the complexity of our algorithms. Moreover, in Section~\ref{sec:Preliminary} we explain in detail what additional data, that does not depend on the given elements $u$ and $v$ and is used in our algorithms, is considered to be computed in advance.

The following theorem shows that we do not need solution to the conjugacy problem in parabolic subgroups to solve the conjugacy problem for hyperbolic elements of $G$ and even to find a conjugating element. Moreover, we do not need to know \textit{a priori} whether given elements are hyperbolic, because we can determine this using only solution to the word problem. The statement also provides estimates for the time complexity of these computations.
\begin{thm}\label{thm:IntroConj} Let $G$ be a group, hyperbolic relative to the set of subgroups $\mathcal{P}=\{P_1,\dots,P_m\}$. The following data is considered input of our algorithms.
\begin{enumerate}
\item[(i)] A relative Dehn presentation $\left\langle S_0,P_1,\dots,P_m \mid \mathcal{R} \right\rangle $ for $G$, along with finite generating sets $S_i$ for parabolic subgroups: $P_i=\left\langle S_i\right\rangle $ for $i=1,2,\dots,m$. We denote by $S$ the finite generating set $S=\cup_{i=0}^{m} S_i$ for $G$;
\item[(ii)] Solution(s) to the word problem in the parabolic subgroups; let $O(C_w^{(par)}(n))$ denote the (maximum) complexity of these procedures;
\item[(iii)]  Two words $w_u, w_v\in F(S)$, where $F(S)$ is the free group on $S$. We consider the maximum length $\bar{L}=\max\{|w_u|_S,|w_v|_S \}$ of these words the length of the input. Denote by $u$ and $v$ the elements of $G$, defined by the words $w_u$ and $w_v$, respectively.
\end{enumerate}
There are algorithms as follows:
\begin{enumerate}
\item  An algorithm that decides whether $u$ (or $v$) is a hyperbolic or a parabolic element of $G$. The time complexity of the algorithm is $O(\bar{L}C_w^{(par)}(\bar{L}))$.
\item \label{ItemConjSearch} For $u$ and $v$ hyperbolic in $G$, an algorithm that decides whether or not $u$ and $v$ are conjugate in $G$. Moreover, if $u$ and $v$ are conjugate then a conjugating element will be found. The time complexity of the algorithm is $O(\bar{L}^2 C_w^{(par)}(\bar{L}) \log \bar{L})$. 
\end{enumerate}
\end{thm}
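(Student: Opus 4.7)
The plan is to reduce both statements to a common preprocessing step---computation, via a cyclic adaptation of Farb's curve-shortening procedure, of short conjugacy-class representatives---after which Part~(1) becomes a syntactic check and Part~(2) becomes a constant-size search. Concretely, I would first apply Proposition~\ref{cor:Lemma431} to $w_u$ and $w_v$ to obtain cyclic relative $k$-local geodesic representatives $\bar{w}_u, \bar{w}_v$ of the conjugacy classes $[u]_G$ and $[v]_G$, together with conjugating words $h_u, h_v \in F(S)$ satisfying $h_u u h_u^{-1} = \bar u$ and $h_v v h_v^{-1} = \bar v$. The cyclic shortening calls Farb's word-problem algorithm (Theorem~\ref{thm:Farb}) on subwords; each such call costs $O(\bar L \log \bar L \cdot C_w^{(par)}(\bar L))$, the number of shortening rounds is linear in $\bar L$, and the total preprocessing cost therefore fits inside the bound claimed for Part~(2).

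For Part~(1), I would exploit the fact that, by Definition~\ref{def:parabolicelt}, $u$ is parabolic if and only if it has a conjugate lying in a single parabolic subgroup $P_i$. On the level of a cyclic relative $k$-local geodesic this translates into the syntactic condition that $\bar{w}_u$ lies entirely in $F(S_i)$ for some $i$, i.e., reduces to a single $S_i$-syllable. Since only the syllable structure of the cyclic geodesic (rather than a full geodesic) is needed, a single pass of the Farb shortening procedure followed by a linear scan of $\bar{w}_u$ suffices, which accounts for the stated $O(\bar L \cdot C_w^{(par)}(\bar L))$ bound without an extra $\log \bar L$ factor.

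For Part~(2), assume $u$ and $v$ are hyperbolic. Theorem~\ref{thm:upperboundconst}(1) asserts that if $\bar u$ and $\bar v$ are conjugate in $G$ then they are conjugate by some $g_0 \in G$ whose $S$-length is bounded by a universal constant $C = C(\delta)$ depending only on the precomputed presentation data. Enumerate all words in $F(S)$ of length at most $C$---a set of cardinality $|S|^{C+1} = O(1)$---and for each candidate $g$ call Farb's algorithm on $g\bar{w}_u g^{-1}\bar{w}_v^{-1}$, a word of length $O(\bar L)$; each such call costs $O(\bar L \log \bar L \cdot C_w^{(par)}(\bar L))$, and the whole search therefore contributes only $O(\bar L \log \bar L \cdot C_w^{(par)}(\bar L))$. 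If some $g$ yields the identity in $G$, return the conjugator $h_v^{-1} g h_u$ for $u$ and $v$; otherwise declare $u$ and $v$ non-conjugate, since a hypothetical longer conjugator for $\bar u,\bar v$ would contradict Theorem~\ref{thm:upperboundconst}(1). Combining the $O(\bar L^2 \log \bar L \cdot C_w^{(par)}(\bar L))$ preprocessing with the $O(\bar L \log \bar L \cdot C_w^{(par)}(\bar L))$ search yields the stated total complexity.

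The main obstacle is the preprocessing step itself: one must establish a cyclic analogue of Farb's curve-shortening algorithm whose output is a conjugacy-class-minimal relative $k$-local geodesic, whose associated conjugators $h_u, h_v$ have $S$-length controlled by $O(\bar L)$, and which terminates in a linear number of rounds. Correctness and complexity rest on the relative-hyperbolic version of Farb shortening (Lemma~\ref{lem:Farb}), but controlling the syllable bookkeeping across the cyclic seam, preventing length blow-up of $h_u, h_v$, and bounding the number of rounds by $O(\bar L)$ rather than $O(\bar L^2)$ is delicate and is exactly the content of Proposition~\ref{cor:Lemma431}. Once that proposition is in place, the two algorithms here follow from it together with Theorem~\ref{thm:upperboundconst}(1) and the word-problem bound of Theorem~\ref{thm:Farb}.
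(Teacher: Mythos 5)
There are two genuine gaps. The more serious one is in your Part~(1): you claim that $u$ being parabolic ``translates into the syntactic condition that $\bar{w}_u$ lies entirely in $F(S_i)$ for some $i$.'' This is false, and it is precisely the distinction the paper draws after Definition~\ref{def:parabolicelt} between parabolic elements written as words in some $S_i$ and parabolic elements ``written as hyperbolic words.'' The cyclic relative $(8\delta+1)$-local geodesic $\alpha$ produced by Proposition~\ref{cor:Lemma431} is only a $(3,2\delta)$-quasi-geodesic; if $u$ is conjugate into $P_i$ by a long conjugator, nothing forces $\alpha$ to collapse to a single $S_i$-syllable, so a linear scan of the syllable structure does not decide parabolicity. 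The paper's actual argument (Theorem~\ref{thm:parabolic}) needs real work here: it shows via Theorem~\ref{thm:conjugatorforlongelts} that a parabolic element must have $\ell_{\alpha}\leq 86\delta+3$, then uses Lemmas~\ref{lem:veryshortg}, \ref{lem:isolated} and \ref{lem:noselfintersection} to bound the parabolic components of a relative geodesic representative by $2C(3)$, placing it in the finite precomputed set $\mathcal{L}_{8}$, and finally decides parabolicity by finitely many word-problem comparisons against the precomputed list $\mathcal{L}_{10}$ of elements of $\mathcal{L}_8$ conjugate into a parabolic subgroup. Without this (or something equivalent) your algorithm gives wrong answers on parabolic elements of the second type.

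The second gap is in Part~(2): Theorem~\ref{thm:upperboundconst}(1), and all the length bounds feeding into it, concern a shortest element conjugating a \emph{cyclic permutation} of $u$ to a \emph{cyclic permutation} of $v$ (this is how ``shortest conjugating element'' is defined in Section~\ref{sec:notation}). Your search tests only the single equation $g\bar{w}_u g^{-1}\bar{w}_v^{-1}=1$, so conjugate pairs whose short conjugator only links nontrivial cyclic permutations would be declared non-conjugate. The fix is to range over all pairs of cyclic permutations $\tilde{u},\tilde{v}$ of $\bar{w}_u,\bar{w}_v$, which is exactly where the paper's $\bar{L}^2$ factor comes from (Theorem~\ref{thm:alglongelts}: constantly many candidates $x\in\mathcal{L}_4$, at most $L^2$ products $x\tilde{u}x^{-1}\tilde{v}^{-1}$, each checked in $O(C_w^{(par)}(L)\log L)$); your accounting instead charges $\bar{L}^2\log\bar{L}$ to the preprocessing, whereas Proposition~\ref{cor:Lemma431} runs in $O(\bar{L}C_w^{(par)}(\bar{L}))$. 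A smaller point: in the short case $L<86\delta+3$ the paper does not enumerate conjugators at run time but falls back on the precomputed conjugacy data $\mathcal{L}_{88}$, $\mathcal{L}_{12}$ on the finite set $\mathcal{L}_8$ (Lemma~\ref{lem:replacebygeodesics}); your uniform enumeration up to the universal constant of Theorem~\ref{thm:upperboundconst}(1) could be made to work there, but only once the cyclic-permutation issue is repaired.
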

Theorem \ref{thm:IntroConj}(1) follows from Theorem~\ref{thm:parabolic}, and Theorem \ref{thm:IntroConj}(2) follows from Theorem~\ref{thm:algorithm}(2) and Theorem~\ref{thm:Search}(1).  The statement of Theorem~\ref{thm:algorithm}(3), not included in Theorem~\ref{thm:IntroConj}, provides solution to the conjugacy problem for parabolic elements in $G$, and this is where we use solution(s) to the conjugacy problem in parabolic subgroups. Let $O(C_c^{(par)}(n))$ denote the (maximum) complexity of these solutions.
As an immediate corollary of Theorem~\ref{thm:algorithm}, we have the following estimate for the time complexity of the conjugacy problem.
\begin{thm} \label{thm:ComplexityConj} There is an algorithm which takes as input all of the data listed in Theorem~\ref{thm:IntroConj} (cf. Convention~\ref{convention51}) and solution to the conjugacy problem in each one of the parabolic subgroups of $G$, and decides whether or not $u$ and $v$ are conjugate in $G$. The time complexity of the algorithm is 
$$T_c(\bar{L})=\max\{O(C^{(par)}_c(\bar{L})),O(\bar{L}^2C^{(par)}_w(\bar{L})\log \bar{L})\}.$$
\end{thm}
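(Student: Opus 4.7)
The plan is to decompose the algorithm into three stages based on the type (hyperbolic or parabolic) of the input elements, using the fact that each of these two classes is invariant under conjugation in $G$.

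First, I would apply the algorithm of Theorem~\ref{thm:IntroConj}(1) (obtained from Theorem~\ref{thm:parabolic}) to each of $u$ and $v$, classifying each as hyperbolic or parabolic in time $O(\bar{L}\,C_w^{(par)}(\bar{L}))$. If exactly one of $u$, $v$ is hyperbolic while the other is parabolic, I output ``not conjugate'' and halt; this is justified by the conjugation-invariance of the two classes. Thus after this preprocessing the remaining work splits into two cases that match the structure of Theorem~\ref{thm:algorithm}.

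In the case that both $u$ and $v$ are hyperbolic, I invoke the algorithm of Theorem~\ref{thm:IntroConj}(2) (assembled from Theorem~\ref{thm:algorithm}(2) and Theorem~\ref{thm:Search}(1)), which decides conjugacy in time $O(\bar{L}^2\,C_w^{(par)}(\bar{L})\log\bar{L})$ and, importantly, needs only the word-problem oracles in the parabolic subgroups. In the remaining case where both $u$ and $v$ are parabolic, I invoke the algorithm of Theorem~\ref{thm:algorithm}(3), which (after conjugating each element into an appropriate parabolic subgroup and normalising its representative) reduces the question to finitely many conjugacy queries in the parabolic subgroups and contributes the term $O(C_c^{(par)}(\bar{L}))$ to the total cost.

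Summing the three stage complexities yields
\[
T_c(\bar{L}) \;=\; O(\bar{L}\,C_w^{(par)}(\bar{L})) \;+\; \max\bigl\{O(\bar{L}^{2}\,C_w^{(par)}(\bar{L})\log\bar{L}),\, O(C_c^{(par)}(\bar{L}))\bigr\},
\]
and the first summand is absorbed into the second, giving the claimed bound. The only real obstacle is the complexity bookkeeping: I would have to verify that the parabolic-case algorithm of Theorem~\ref{thm:algorithm}(3) really fits within $O(C_c^{(par)}(\bar{L}))$ without a hidden additional polynomial factor coming from repeated calls to the word-problem oracle, and that the reductions used in that algorithm (finding a conjugator into a parabolic subgroup, producing a normal form, matching parabolic subgroups up to conjugacy) all fit within the stated maximum. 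Since the introduction advertises the present theorem as an immediate corollary of Theorem~\ref{thm:algorithm}, no new ideas beyond this accounting should be needed.
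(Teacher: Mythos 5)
Your proposal is correct and follows essentially the same route as the paper, which presents Theorem~\ref{thm:ComplexityConj} as an immediate corollary of Theorem~\ref{thm:algorithm}: classify $u$ and $v$ via Theorem~\ref{thm:parabolic}, branch into the three cases of Theorem~\ref{thm:algorithm}, and observe that the classification cost $O(\bar{L}\,C_w^{(par)}(\bar{L}))$ and the case-(3) term $O(\bar{L}\,C_w^{(par)}(\bar{L}))$ are both absorbed by the case-(2) bound, leaving the stated maximum. Your worry about a hidden factor in the parabolic case is already settled by the explicit bound $\max\{O(C^{(par)}_c(\bar{L})),O(\bar{L}C^{(par)}_w(\bar{L}))\}$ stated in Theorem~\ref{thm:algorithm}(3).
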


Note that the algorithm from the statement of Theorem~\ref{thm:IntroConj}(\ref{ItemConjSearch}) solves also the conjugacy search problem for hyperbolic elements of $G$. 
 According to Theorem~\ref{thm:IntroConj} (see also  Theorem~\ref{thm:parabolic}), one does not have to specify whether or not $u$ and $v$ are hyperbolic elements. If the elements $u$ and $v$ happen to be hyperbolic in $G$ then a conjugating element will be found in polynomial time, whenever the word problem in parabolic subgroups has polynomial time solution. This result applies to hyperbolic groups (where the parabolic subgroups are all trivial) and should be contrasted with the straightforward exponential time algorithm to solve the conjugacy search problem in a hyperbolic group, mentioned earlier. 
 The algorithm that we present in Theorem~\ref{thm:Search} solves the conjugacy search problem for all the elements of $G$. However, if $u$ and $v$ are parabolic then the running time $T_{search}(\bar{L})$ of the algorithm will depend on the complexity $O(C_{search}^{(par)}(n))$ of solution to the conjugacy search problem in parabolic subgroups. Theorem~\ref{thm:conjsearch} provides the following estimate:
 $$T_{search}(\bar{L})=\max\{T_c(\bar{L}),O(C^{(par)}_{search}(\bar{L}))\}.$$ 
 Here $T_c(\bar{L})$ is the complexity of the conjugacy problem in $G$, see Theorem~\ref{thm:ComplexityConj}.

As an application of the results obtained in the paper, we prove the following theorem.
\begin{thm}\label{thm:abeliansolvable}
Let $G$ be a finitely generated group hyperbolic relative to subgroups $P_1,\dots,P_m$. Then the word problem, the conjugacy problem and the conjugacy search problem in $G$ can be solved in polynomial time if the parabolic subgroups are abelian, free solvable or Artin groups of extra-large type. More precisely, we have the following.
\begin{enumerate}
\item If the parabolic subgroups $P_1,\dots,P_m$ are free solvable then the time complexity of the word problem is $O(n^3\log n)$. The time complexity of the conjugacy and of the conjugacy search problem is $O(n^5\log n)$ for hyperbolic elements and $O(n^8)$ for parabolic elements of $G$. 
\item If the parabolic subgroups $P_1,\dots,P_m$ are Artin groups of extra-large type then the time complexity of the word problem is $O(n^2\log n)$. The time complexity of the conjugacy and of the conjugacy search problem is $O(n^4\log n)$ for hyperbolic elements and $O(n^3)$ for parabolic elements of $G$. 
\item If the parabolic subgroups $P_1,\dots,P_m$ are abelian then the time complexity of the word problem is $O(n\log n)$, and the time complexity of the conjugacy and of the conjugacy search problem is $O(n^3\log n)$.
\end{enumerate}
\end{thm}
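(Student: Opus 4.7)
The plan is to view Theorem~\ref{thm:abeliansolvable} as a direct application of the machinery already developed in the paper, specialized to three well-studied classes of parabolic groups. The general scheme is to invoke Theorem~\ref{thm:Farb} for the word problem, Theorem~\ref{thm:ComplexityConj} for the conjugacy problem, and the analogous bound $T_{search}(\bar{L})=\max\{T_c(\bar{L}),O(C^{(par)}_{search}(\bar{L}))\}$ for the conjugacy search problem. Once these master formulas are in place, the proof reduces to substituting the best known upper bounds for $C^{(par)}_w(n)$, $C^{(par)}_c(n)$ and $C^{(par)}_{search}(n)$ in each of the three families listed, and evaluating the resulting max.

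First I would collect the relevant complexities from the literature. For free solvable groups of finite rank, the word problem is known (via Magnus embedding and Fox derivative techniques) to be solvable in polynomial time, with the currently recorded bound cubic; the conjugacy and conjugacy search problems in free solvable groups are polynomial of higher degree (I will cite the papers of Vassileva and of Myasnikov--Roman'kov--Ushakov--Vershik). For Artin groups of extra-large type, the work of Peifer and the geodesic/biautomatic structure yield a quadratic word problem and a cubic (conjugacy search-compatible) conjugacy problem. For finitely generated abelian groups the word problem is essentially linear and conjugacy coincides with equality, so $C^{(par)}_c=C^{(par)}_{search}=C^{(par)}_w$. Each of these inputs plugs straight into the master bounds.

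Next I would carry out the three arithmetic substitutions. Part (1): feed the cubic word problem bound into Theorem~\ref{thm:Farb} to obtain $O(n^3\log n)$, then into the formulas for $T_c(\bar{L})$ and $T_{search}(\bar{L})$; the dominant term in the conjugacy/search bound for hyperbolic elements is $O(\bar{L}^2\cdot \bar{L}^3\log \bar{L})=O(\bar{L}^5\log\bar{L})$, while for parabolic elements the bound inherited from the parabolic conjugacy/search problem gives $O(\bar{L}^8)$. Part (2): the quadratic word problem yields $O(n^2\log n)$ for the word problem in $G$, the max formula gives $O(\bar{L}^4\log\bar{L})$ for hyperbolic elements, and the $O(n^3)$ parabolic conjugacy bound dominates for parabolic elements. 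Part (3): abelian parabolics give $O(n\log n)$ for the word problem, and, because of Theorem~\ref{thm:EstimateAbelian} and Remark~\ref{rem:conjparabolic}, we may use the linear-in-$\bar{L}$ bound on a conjugating element uniformly, so the hyperbolic estimate $O(\bar{L}^2 C^{(par)}_w(\bar{L})\log\bar{L})=O(\bar{L}^3\log\bar{L})$ applies to all conjugate pairs, absorbing the parabolic branch.

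The main obstacle is really bookkeeping rather than mathematics: one must check that for each of the three families the published word- and conjugacy-problem algorithms actually produce, within the claimed complexity, the kind of output (a certificate or explicit conjugator) that the master algorithms of Theorem~\ref{thm:ComplexityConj} and Theorem~\ref{thm:conjsearch} consume, and that the abelian case genuinely falls under the improved estimate of Theorem~\ref{thm:EstimateAbelian} so that the $O(C^{(par)}_c(\bar{L}))$ term in the max does not dominate. Once these compatibility checks are in place, the three stated complexity bounds follow by direct substitution.
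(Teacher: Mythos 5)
Your proposal is correct and follows essentially the same route as the paper: the author likewise derives the theorem by feeding the known parabolic complexities (cubic word problem for free solvable groups via Myasnikov--Roman'kov--Ushakov--Vershik and $O(n^8)$ conjugacy via Vassileva; quadratic word problem via Peifer's biautomaticity and cubic conjugacy via Holt--Rees for extra-large Artin groups; linear for abelian groups) into Theorem~\ref{thm:Farb} and the master complexity bounds of Theorem~\ref{thm:algorithm}, with the abelian case absorbed by Remark~\ref{rem:conjparabolic}. The only cosmetic discrepancy is that the cubic conjugacy bound for Artin groups of extra-large type is due to Holt and Rees rather than to Peifer's biautomatic structure.
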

\begin{proof} This is a consequence of Theorem~\ref{thm:algorithm}, proved in Section~\ref{sec:Alg}. The assertion (1) follows from the results of Myasnikov, Roman'kov, Ushakov and Vershik~\cite{MRUV10} who showed that the word problem in free solvable groups can be solved in cubic time, and from the algorithm due to Vassileva~\cite{Vassileva11} solving the conjugacy problem in free solvable groups in $O(n^8)$. In the assertion (2), the complexity of the word problem follows from the theorem by Peifer in~\cite{Peifer} stating that the Artin groups of extra-large type are bi-automatic, and the complexity of the conjugacy problem for parabolic elements is a result by Holt and Rees in~\cite{HoltRees}. The complexity of the word problem in the assertion (3) is a particular  case of Theorem~\ref{thm:Farb}.
\end{proof}
In a recent preprint~\cite{AntolinCiobanu}, Antolin and Ciobanu provide a cubic-time algorithm for solving the conjugacy problem in groups hyperbolic relative to abelian subgroups.

Another application was suggested by Ashot Minasyan. In~\cite{RipsConstruction} Rips gave an example of a finitely generated normal subgroup $K$ of a hyperbolic group $H$ such that $K$ is not hyperbolic as an abstract group. Moreover, the membership problem is not solvable for $K$ in $H$. The proof of the following theorem shows that $K$ is not a relatively hyperbolic group. Note that $\partial H$ is the compact space on which $K$
acts as a convergence group. Therefore, Theorem~\ref{thm:Ashot} gives an example of a finitely generated convergence group which is not relatively hyperbolic with respect to any
family of proper subgroups. Moreover, since $K$ is normal in $H$, its limit set coincides with the entire
Gromov boundary $\partial H$ of $H$. This can be contrasted with the characterization of relatively hyperbolic groups as geometrically finite convergence groups by Asli Yaman in~\cite{Yaman}. 

\begin{thm} (Minasyan)\label{thm:Ashot}
There is a finitely generated subgroup of a hyperbolic group that is not hyperbolic relative to any finite set of its own proper subgroups.
\end{thm}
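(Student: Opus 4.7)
The plan is to apply the Rips construction from \cite{RipsConstruction} to a carefully chosen quotient and to rule out any hypothetical peripheral structure on $K$ by exploiting the $H$-equivariance that normality provides.

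First, I would choose $Q$ to be a finitely presented, infinite, simple group containing a copy of $\mathbb{Z}^{2}$ (for instance, a Burger--Mozes simple lattice in a product of trees). Rips' construction produces a short exact sequence $1\to K\to H\to Q\to 1$ with $H$ word-hyperbolic and $K$ finitely generated. A normal virtually cyclic subgroup of a non-elementary hyperbolic group would force the ambient group to be virtually cyclic, so $K$ is automatically non-elementary; were $K$ itself hyperbolic, Mosher's theorem on hyperbolic extensions would force $Q$ to be hyperbolic, which it is not since $Q\supseteq\mathbb{Z}^{2}$. Therefore $K$ is not hyperbolic.

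Assume toward a contradiction that $K$ is hyperbolic relative to a family $\mathcal{Q}=\{Q_{1},\dots,Q_{m}\}$ of proper subgroups. After discarding any finite $Q_{i}$, at least one $Q_{i}$ remains infinite (otherwise $K$ would itself be hyperbolic). Since $K\trianglelefteq H$, each $h\in H$ induces an automorphism $\phi_{h}$ of $K$, and by the standard rigidity of the peripheral structure in relatively hyperbolic groups, $\phi_{h}$ permutes the $K$-conjugacy classes of maximal peripheral subgroups. The resulting homomorphism $H\to\mathrm{Sym}(m)$ factors through $Q=H/K$, and simplicity of the infinite group $Q$ forces it to be trivial. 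Consequently, for every $h\in H$ the subgroup $hQ_{i}h^{-1}$ is $K$-conjugate to $Q_{i}$, so $H=K\cdot N_{H}(Q_{i})$. Almost malnormality of the infinite peripheral subgroup $Q_{i}$ in $K$ yields $N_{K}(Q_{i})=Q_{i}$, giving the short exact sequence
\[
1\longrightarrow Q_{i}\longrightarrow N_{H}(Q_{i})\longrightarrow Q\longrightarrow 1.
\]

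To finish, I would pick an element $q\in Q_{i}$ of infinite order (such $q$ exists because torsion subgroups of hyperbolic groups are finite) and observe that $C_{H}(Q_{i})\subseteq C_{H}(q)$ is virtually cyclic inside the hyperbolic group $H$. The conjugation action $N_{H}(Q_{i})\to\mathrm{Out}(Q_{i})$ has kernel $Q_{i}\cdot C_{H}(Q_{i})$; the simple quotient $Q=N_{H}(Q_{i})/Q_{i}$ then forces either $C_{H}(Q_{i})\twoheadrightarrow Q$ (impossible since $Q$ is not virtually cyclic) or an embedding $Q\hookrightarrow\mathrm{Out}(Q_{i})$. From the latter I would lift a copy of $\mathbb{Z}^{2}\subset Q$ back to $N_{H}(Q_{i})$, obtaining $\tilde{a},\tilde{b}\in N_{H}(Q_{i})$ of infinite order with $[\tilde{a},\tilde{b}]\in Q_{i}$, and the main obstacle is to upgrade this to a pair of commuting infinite-order elements in $H$, which would produce a $\mathbb{Z}^{2}\leq H$ and contradict hyperbolicity of $H$. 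The difficulty is that $Q_{i}$ need not be quasi-convex in $H$, so standard commensurator arguments do not apply directly; I expect the delicate step to be trapping the iterated commutators $[\tilde{a}^{n},\tilde{b}^{m}]$ inside a finite subset of $Q_{i}$, using the almost malnormality of $Q_{i}$ in $K$ together with the virtual cyclicity of centralizers of infinite-order elements in $H$, and then extracting commuting powers by a pigeonhole argument.
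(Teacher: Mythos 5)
Your proposal takes a genuinely different, structural route from the paper, which argues algorithmically: the paper chooses the Rips quotient $P=H/K$ to be a finitely presented group with \emph{unsolvable word problem}, so that by~\cite[Corollary]{RipsConstruction} the membership problem for $K$ in $H$ is undecidable; it then shows that if $K$ were hyperbolic relative to proper subgroups, membership of $g$ in $K$ could be decided by testing (via Theorem~\ref{thm:IntroConj}, which needs only word-problem oracles --- available because $K$ and the $Q_i$ sit inside the hyperbolic group $H$) whether a fixed infinite-order hyperbolic element $x\in K$ is conjugate in $K$ to $gxg^{-1}$. Your plan instead feeds an infinite simple group containing $\mathbb{Z}^2$ into Rips' construction and tries to contradict hyperbolicity of $H$ directly. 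The idea is attractive, but as written it is not a proof; there are two genuine gaps.

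First, the step ``$\phi_h$ permutes the $K$-conjugacy classes of maximal peripheral subgroups'' is not available for an arbitrary peripheral structure $\mathcal{Q}$. Automorphisms of a relatively hyperbolic group preserve the peripheral structure only under extra hypotheses on the peripherals (e.g.\ that each $Q_i$ is itself not relatively hyperbolic with respect to proper subgroups); in general a group admits many inequivalent peripheral structures --- a free group is hyperbolic relative to $\langle w\rangle$ for any suitably malnormal $w$, and an automorphism can carry $w$ off its conjugacy class --- so the homomorphism $H\to\mathrm{Sym}(m)$ and everything downstream of it (the splitting $H=K\cdot N_H(Q_i)$, the exact sequence $1\to Q_i\to N_H(Q_i)\to Q\to 1$) is not justified. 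Second, and as you yourself concede, the endgame is missing: from $\tilde a,\tilde b\in N_H(Q_i)$ of infinite order with $[\tilde a,\tilde b]\in Q_i$ there is no argument given that produces a $\mathbb{Z}^2$ in $H$, and the suggested pigeonhole on iterated commutators $[\tilde a^n,\tilde b^m]$ is a hope rather than a proof --- precisely because, as you note, $Q_i$ need not be quasi-convex in $H$, there is no a priori finiteness to pigeonhole on. Until both points are repaired the contradiction with hyperbolicity of $H$ is not established; the paper's decidability argument avoids both issues entirely.
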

\begin{proof} Let $K$ be the subgroup of the hyperbolic group $H$, both as constructed in~\cite[Theorem]{RipsConstruction}. Let $g\in H$ be arbitrary, we want to decide whether $g\in K$. 

Suppose that $K$ is hyperbolic relative to a set $\mathcal{P}=\{P_1,\dots, P_m \}$ of its proper finitely generated subgroups. Since $K$ and all of $P_i\in\mathcal{P}$ are subgroups of a hyperbolic group, the word problem is solvable in $K$ and in $P_i,\ \forall i$. Therefore, Theorem~\ref{thm:IntroConj} applies.  Enumerating elements of $K$ as words in the generators and applying Theorem~\ref{thm:IntroConj}(1) to each one of them, we can find a hyperbolic element $x\in K$ of infinite order (see Definition~\ref{def:parabolicelt} below). Since $K\lhd H$, $y=gxg^{-1}\in K$, and we can apply Theorem~\ref{thm:IntroConj}(1) to determine whether $y$ is parabolic or hyperbolic in $K$. If $y$ is a parabolic element of $K$ then it cannot be conjugate to $x$ in $K$, and if $y$ is hyperbolic in $K$ then by Theorem~\ref{thm:IntroConj}(2), we can decide whether $x$ and $y$ are conjugate in $K$. If not, then clearly, $g\notin K$. Otherwise, there is $k\in K$ such that $y=kxk^{-1}$, and $g\in K$ if and only if $gk^{-1}\in C_K(x)$. The proof of~\cite[Proposition 3.3]{MartinoMinasyan} shows that the latter inclusion can be decided effectively. 
Alternatively, the case when $x$ and $y$ are conjugate in $K$ could be handled as follows. We choose a f.p. torsion-free group $P$ with
unsolvable word problem, and apply Rips' construction to it. By the construction, $H$ is a
torsion-free hyperbolic group. Therefore, the
centralizer of the element $x$ is cyclic, and since $x \in K$ and $H/K$ is
torsion-free, it follows that $C_H(x)$ must be completely contained in $K$. 
Hence, $x$ and $y$ are conjugate in $K$ iff $g \in K$. The latter is
undecidable as the word problem in $P \cong H/K$ is unsolvable.
Thus, we can decide, whether or not $g\in K$, which contradicts~\cite[Corollary]{RipsConstruction}. Therefore, $K$ is not hyperbolic relative to the set $\mathcal{P}=\{P_1,\dots, P_m \}$.

\end{proof}

The paper is organized as follows. In Section~\ref{sec:defnsnotation} we collect the definitions and introduce the notation used throughout the paper. In Section~\ref{HypGpsEstimates} we establish the estimates on the length of a conjugating element, for both hyperbolic and parabolic elements. In Section~\ref{sec:Preliminary} we discuss preliminary computations. The algorithms are presented in Section~\ref{sec:Alg}.

\subsection*{Acknowledgment} Parts of this paper were written during my visit to the Research Centre Erwin Schr\"{o}dinger International Institute for Mathematical Physics (ESI) of the University of Vienna. I am grateful to the organizers of the research program ``Computation in groups'' Goulnara Arzhantseva, Olga Kharlampovich and Alexey Miasnikov for their kind invitation and to the ESI for the support.

\section{Definitions and Notation} \label{sec:defnsnotation} 

Let $G$ denote a group, generated by a finite set $S$.  We denote by $F(S)$ the free group with basis $S$. Let $\Gamma=\Gamma(G;S)$ be the Cayley graph of $G$ with respect to $S$. We assign length 1 to each edge of $\Gamma$ and denote by $d_{\Gamma}$ the obtained metric on $\Gamma$.

Let $I=[0,s]$ be an interval,  $\gamma: I\rightarrow\Gamma$ be an arbitrary path in $\Gamma$, and let $A$ and $B$ be two points in the image $im(\gamma)$ of $\gamma$. If $t_a$ and $t_b$ in $I$ are such that $\gamma(t_a)=A$ and $\gamma(t_b)=B$ then we denote by $l_{\gamma}(A,B)$ the length of the image of the subpath of $im(\gamma)$ joining $A$ and $B$: $l_{\gamma}(A,B)=l(im(\gamma|_{[t_a,t_b]}))$. If $t_a=0$ and $t_b=s$ are the enpoints of $I$, then we write $l_{\gamma}$ instead of $l_{\gamma}(A,B)$. A path $\gamma$ is called \textit{geodesic} if $l_{\gamma}(A,B)=d_{\Gamma}(A,B)$, for any two points $A,B\in im(\gamma)$. In what follows, abusing notation, we denote by $\gamma$ both the path and its image in $\Gamma$. We denote by $\gamma_-$ the origin $\gamma(0)$ and by $\gamma_+$ the terminus $\gamma(s)$ of the path $\gamma: [0,s]\rightarrow\Gamma$.

Let $V_g$ denote the unique vertex in $\Gamma$ that corresponds to $g$, and let $V_1\in\Gamma$ correspond to the identity of $G$. We denote by $|g|_{\Gamma}=d_{\Gamma}(V_1,V_g)$ the length of a geodesic path $\gamma\subset\Gamma$ representing $g$.

\begin{Def} \label{def:localgeodesic} Let $k>1$ be an integer. A path $\gamma$ is called a $k$-\textit{local geodesic} if every subpath of $\gamma$ of length at most $k$ is a geodesic. We say that $\gamma$ is a \textit{cyclic} ($k$-\textit{local}) \textit{geodesic} if  the concatenation $\gamma\circ\gamma$ is a ($k$-local) geodesic.
\end{Def}

\begin{Def}\label{def:hypspace} Let $\delta\geq 0$ be a real number. A geodesic metric space  $(X,d)$ is called $\delta$-\textit{hyperbolic} if for every geodesic triangle in $X$, each side of the triangle is contained in the $\delta$-neighborhood of the union of the other two sides.  A geodesic metric space  $(X,d)$ is called \textit{hyperbolic} if it is $\delta$-hyperbolic for some $\delta\geq 0$.
\end{Def}

\subsection{Relatively hyperbolic groups}\label{sec:defrelhyp}  In this section we give basic definitions and state some facts about the properties of relatively hyperbolic groups. We refer an interested reader to the papers by Farb~\cite{Farb}, Osin~\cite{OsinRelHypGps}, Bowditch~\cite{BowditchRelHypGps}, Hruska~\cite{HruskaRelHypQConvexity}, and the author~\cite{BumaginDefns} for various definitions and more detailed discussion of this class of groups.

Let $G$, $S$ and  $\Gamma=\Gamma(G;S)$ be as above. Let $P_1,P_2,\dots,P_m$ be finitely generated subgroups of the group $G$ and suppose that $P_i=\left\langle S_i\right\rangle $ for $i=1,2,\dots,m$, such that $S=S_0\cup S_1\cup S_2\cup\dots\cup S_m$. We assume that the sets $S_i$ $(i=0,1,\dots,m)$ are all finite. Denote by  $\Gamma_i=\Gamma(P_i;S_i)$ the Cayley graph of $P_i$ with respect to the generating set $S_i$ and consider the following presentation for $G$: 
\[
G=\left\langle S_0,P_1,\dots,P_m\mid R=1,\ R\in\mathcal{R}\right\rangle,
\]
where $\mathcal{R}$ is such that $G\cong F(S_0)*P_1*\dots *P_m/\left\langle \left\langle \mathcal{R}\right\rangle \right\rangle $. Note that in the latter presentation for $G$ the generating set $\hat{S}=S_0\cup P_1\dots\cup P_m$ is infinite if some of the subgroups $P_i$ are infinite. We denote by $\hat{\Gamma}=\hat{\Gamma}(G;\hat{S})$ the Cayley graph of $G$ with respect to $\hat{S}$. The graph $\hat{\Gamma}$ can be obtained from $\Gamma(G;S)$ as follows: add an edge between every pair of vertices in each $\Gamma_i$ and extend equivariantly with respect to the action of $G$. That is, add an edge between every pair of vertices in each left coset of $P_i$, for all $i=1,2,\dots,m$. Alternatively, one can add a vertex $v(gP_i)$ joined by an edge of length $1/2$ to every element in $gP_i$ for each left coset of $P_i$, for all $i$; see~\cite{Farb} for details. We call $\hat{\Gamma}$ a \textit{coned-off Cayley graph} and denote by $d_{\hat{\Gamma}}$ the (relative) metric associated with it. (Quasi-) geodesic paths in $\hat{\Gamma}$ are called \textit{relative (quasi-)geodesics}.

\begin{Def}\label{def:relhypgp}
The group $G$ is called \textit{weakly hyperbolic relative to subgroups} $\{P_1,\dots,P_m\}$ if the coned-off Cayley graph $\hat{\Gamma}$ is a hyperbolic metric space.
\end{Def}

Examples of weakly relatively hyperbolic groups, that can be found in~\cite{OsinWeaklyHyp} and in~\cite{BumaginConj}, show that in general, weakly relatively hyperbolic groups do not possess particularly nice properties. The problem is as follows. While a hyperbolic space can be characterized by the property that $(\lambda,\epsilon)$-quasi-geodesics with common endpoints are uniformly Hausdorff-close to one another, in a Cayley graph of a weakly relatively hyperbolic group this property no longer holds. We need to gain some control over quasi-geodesics to be able to draw interesting algebraic consequences.  This is why Farb~\cite{Farb} introduced an additional property, which he called the Bounded Coset Penetration (or BCP) property, see Definition~\ref{def:BCP}. We need to introduce some more terminology to explain it.

\begin{Def} \label{def:parabolicelt} An element $a\in G$ is called a \textit{parabolic element} if $a$ is conjugate into a subgroup $P_i\in\mathcal{P}$. Otherwise, $a$ is called a \textit{hyperbolic element}.
\end{Def}
In what follows, we distinguish two types of parabolic elements. Namely, if $a$ is written as a word in $S_i$, which we sometimes write as $a\in F(S_i)$ with slight abuse of notation, then clearly, $a\in P_i$. Parabolic elements of the other type are written as hyperbolic words and so are not ``obviously'' parabolic.
\begin{Def} \label{def:paraboliccomponent}
Let $h\in G$, and let $\alpha_h$ be the path in $\Gamma$ labelled by $h$, so that $h=lab(\alpha_h)$. A nonempty subword $p$ of $h$ is called a \textit{parabolic component of} $h$ if $p$ is an element of $P_i\in\mathcal{P}$, written as a word in $S_i$, and is a maximal parabolic subword of $h$ with respect to inclusion. If $h=h_1ph_2$, where $p\in P_i$ and $|p|_{\Gamma}\geq 1$ then we say that the path $\alpha_h$ in $\Gamma$ labelled by $h$ \textit{penetrates the coset} $h_1P_i$ \textit{along} $p$. We denote by $p_-$ the vertex of $\Gamma$ where $\alpha_h$ first enters $h_1P_i$ and by $p_+$ the vertex of $\Gamma$ where $\alpha_h$ last exits $h_1P_i$. The path $\alpha_p$ joining $p_-$ and $p_+$ inside the coset is a parabolic component of $\alpha_h$. We always assume that $\alpha_p$ is a geodesic path in $\Gamma_i$.
\end{Def}
Note that the relative length of each parabolic component of a path $\alpha$ equals 1.
\begin{Def} \label{def:connectedcomponents} Let $\alpha$ be a path in $\Gamma$, and let $gP_i$ be a coset of a parabolic subgroup of $G$. We say that $\alpha$ \textit{backtracks to} $gP_i$ if $\alpha$  joins two points in $gP_i$ but $\alpha$ is labeled by a non-parabolic word. A path $\alpha$ \textit{backtracks} if there are $g\in G$ and $i\in\{1,2,\dots,m\}$ such that $\alpha$ backtracks to $gP_i$. We say that $\beta$ is a \textit{path without backtracking} if no subpath of $\beta$ backtracks. 

Two distinct parabolic components $p$ and $q$ of a path $\alpha$ are \textit{connected} if $\alpha$  penetrates a coset $gP_i$ along $p$, exits $gP_i$, then backtracks to it and penetrates $gP_i$ along $q$. 
A parabolic component of a path $\alpha$ is called \textit{ isolated} if it is not connected to any other parabolic component of $\alpha$. In particular, if $\alpha$ is a path without backtracking then all the parabolic components of $\alpha$ are isolated.
\end{Def}
It can be readily seen that the following definition is equivalent to Farb's definition of bounded coset penetration \cite[Section 3.3]{Farb}.
\begin{Def}\label{def:BCP}
The group $G$ is said to satisfy the \textit{Bounded Coset Penetration (or BCP) property} if there is a constant $C=C(\lambda,\epsilon)$ such that the following condition holds. Let $\alpha$ and $\beta$ be  $(\lambda,\epsilon)$-quasi-geodesic paths without backtracking with common endpoints and distinct images in $\hat{\Gamma}$.  If $p$ is an isolated component of the closed path $\alpha\circ\beta^{-1}$ then $l_{\Gamma}(p)\leq C$.

We omit $\epsilon$ if $\epsilon=0$ and write $C(\lambda)$ instead of $C(\lambda, 0)$.
\end{Def}

\subsection{Notation} \label{sec:notation}
Let $u$ and $v$ be two elements of $G$, given as products of generators, $w_u$ and $w_v$ in $F(S)$, where $F(S)$ is the free group on $S$. Let $\bar{L}=\max\{ |w_u|_S,|w_v|_S \}$ be the maximum length of $w_u$ and $w_v$ in the word metric of $F(S)$; the length $\bar{L}$ is considered the length of the input of our algorithms. Throughout the paper, we denote by $\gamma_u$ and $\gamma_v$ two relative cyclic geodesics and by $\alpha$ and $\beta$ two relative cyclic $(8\delta+1)$-local geodesics that represent the elements $u$ and $v$, correspondingly. We denote by $L_g=\max\{l_{\gamma_u},l_{\gamma_v}\}$ the maximum relative lehgth of $\gamma_u$ and $\gamma_v$, and by $L=\max\{l_{\alpha},l_{\beta} \}$ the maximum relative lehgth of $\alpha$ and $\beta$. We obtain $\alpha$ and $\beta$ by shortening the paths labelled by $w_u$ and $w_v$ (see Proposition~\ref{cor:Lemma431} for details); therefore, we shall always assume that $L\leq \bar{L}$.

Suppose that $u$ and $v$ are conjugate in $G$, so that $v=gug^{-1}$ for some $g\in G$.  We denote by $\sigma$ and $\tau$ two relative geodesics representing $g$, such that  the concatenation $\sigma\circ\gamma_u\circ\tau^{-1}\circ\gamma_v^{-1}$ is a geodesic quadrilateral $Q_g$  and the concatenation $\sigma\circ\alpha\circ\tau^{-1}\circ\beta^{-1}$ is a (quasi-geodesic) quadrilateral $Q$, both are closed paths at the identity in the coned-off Cayley graph $\hat\Gamma$, see Figure~\ref{fig:F1}(Left). Note that the vertices of the quadrilaterals $Q_g$ and $Q$ coincide. We denote the vertices as follows: $A_0=\sigma_-=(\gamma_v)_-=\beta_-$ is the identity, $A_1=\sigma_+=(\gamma_u)_-=\alpha_-$, $A_2=\tau_+=(\gamma_u)_+=\alpha_+$, and $A_3=\tau_-=(\gamma_v)_+=\beta_+$. We refer to $\alpha$ and $\beta$ (or $\gamma_u$ and $\gamma_v$) as the \textit{horizontal} edges of $Q$ (or $Q_g$), and to $\sigma$ and $\tau$ as the \textit{vertical} edges.

If $u,v$ and $g$ are written as parabolic words in $S_i$ for some $i$ then each one of the paths $\alpha$, $\beta$, $\gamma_u$, $\gamma_v$, $\sigma$ and $\tau$ consists of a unique parabolic component in $P_i$. In this case, we say that $Q$ and $Q_g$ are \textit{parabolic} (or $P_i$-\textit{parabolic}) quadrilaterals. If $u\in P_i$ and $v\in P_j$ are written as parabolic words but $g\notin P_i\cup P_j$ then we say that $Q$ and $Q_g$ are \textit{semi-parabolic} quadrilaterals. We say that $Q$ and $Q_g$ are \textit{hyperbolic} quadrilaterals if none of $u$ and $v$ is written as a parabolic word.

\begin{Def} \label{def:synchronouscomponents}
Let $\sigma$ and $\tau$ be two distinct paths in the Cayley graph $\Gamma$ of $G$, both labelled by $g$. Let $g=g_1pg_2$, where $p$ is a parabolic component. The corresponding parabolic components $\sigma_p$ of $\sigma$ and $\tau_p$ of $\tau$ are called \textit{synchronous}.
\end{Def}
Throughout the paper, we are interested in shortest conjugating elements. In what follows, we are always looking for a\textit{ shortest conjugating element} (or a \textit{shortest conjugator}) for $u$ and $v$.  What we mean is an element $g$ conjugating a cyclic permutation of $u$ to a cyclic permutation of $v$, such that $g$ is \textit{shortest with respect to the relative length.} Unless stated otherwise, we say that $g$ is a \textit{shortest conjugating element for} $u$ and $v$ if in the quadrilateral $Q$ the relative geodesic $\sigma$ (or $\tau$) is a shortest path connecting a point on $\alpha$ to a point on $\beta$.
%\textbf{Explain why cyclic permutations of $\alpha$ and $\beta$ correspond to cyclic permutations of $u$ and $v$. Why is $\sigma$ a shortest path in both $Q_g$ and $Q$?}

 The following subsets of the elements of $G$ play a role in our computations in sections~\ref{sec:Preliminary} and~\ref{sec:Alg}: 
\begin{itemize}
\item $B(r_1,r_2)=\{w\in F(S)\mid |w|_{\hat{\Gamma}}\leq r_1, |p|_{\Gamma}\leq r_2,\ \text{for each parabolic component}\ p\ \text{of}\ w \}$ is a subset of the ball $B_{r_1r_2}$  of radius $r_1r_2$ in $\Gamma$. This is a proper subset of $B_{r_1r_2}$, whenever $r_1r_2>1$.
\item $B_i=\{p\in P_i\mid |p|_{\Gamma}\leq C(3) \}$ is the set of ``very short'' elements of $P_i$, for each $i=1,2,\dots,m$.
\end{itemize}
\begin{figure}
\begin{subfigure}{0.3\textwidth}
%\label{fig:QgQ}
\setlength{\unitlength}{.4cm}
\begin{picture}(15, 12)
%RECTANGLE
	%LEFT SIDE
		\qbezier(2, 2)(3, 6)(2, 10)					%curve
		\put(2.5, 6){\vector(0, 1){0}}				%arrowhead
		\put(1.5, 6){$\sigma$}						%labeling
	%BOTTOM SIDE
		\qbezier(2, 2)(7, 3)(12, 2)					%curve
		\put(7, 2.5){\vector(1, 0){0}}				%arrowhead
		\put(4.5, 2.75){$\gamma_v$}			%labeling
	%TOP SIDE
		\qbezier(2, 10)(7, 9)(12, 10)				%curve
		\put(8.25, 9.55){\vector(1, 0){0}}				%arrowhead
		\put(9, 9){$\gamma_u$}					%labeling
	%RIGHT SIDE
		\qbezier(12, 2)(11, 6)(12, 10)				%curve
		\put(11.5, 6){\vector(0, 1){0}}				%arrowhead
		\put(11.75, 6){$\tau$}						%labeling
%SQUIGGLY LINES
	%TOP
		\qbezier(2, 10)(4.5, 8)(7, 10)				%left part
		\qbezier(7, 10)(9.5, 12)(12, 10)			%right part
		\put(7, 10){\vector(1, 1){0}}					%arrowhead
		\put(9, 11.25){$\alpha$}					%labeling
	%BOTTOM
		\qbezier(2, 2)(4.5, 0)(7, 2)				%left part
		\qbezier(7, 2)(9.5, 4)(12, 2)				%right part
		\put(7, 2){\vector(1, 1){0}}					%arrowhead
		\put(6.5, 0.5){$\beta$}						%labeling
%VERTEX LABELING
	\put(.75, 1.5){$A_0$}						%bottom left
	\put(.75, 10.0){$A_1$}						%top left
	\put(12.25, 9.75){$A_2$}					%top right
	\put(12.25, 1.75){$A_3$}					%bottom right
\end{picture}
%	\caption{The quadrilaterals \newline $Q=\sigma\circ\alpha\circ\tau^{-1}\circ\beta^{-1}$ and \newline $Q_g=\sigma\circ\gamma_u\circ\gamma_v^{-1}\circ\beta^{-1}$.}
\end{subfigure}
\qquad \qquad \qquad
\begin{subfigure}{0.3\textwidth}
%\label{fig:SelfInter}
\setlength{\unitlength}{.4cm}
\begin{picture}(16, 12)
%lines
	%curved sides - rectangle
	\qbezier(2, 2)(3, 6)(2, 10)
  	\qbezier(2.5, 7)(0.8, 8.5)(2, 10)
%	\qbezier(2, 10)(7, 9)(12, 10)
	\qbezier(12, 2)(11, 6)(12, 10)
	%squiggly lines
		%top
		\qbezier(2.5, 7)(4.5, 8)(7, 10)
		\qbezier(7, 10)(9.5, 12)(12, 10)
		%bottom
		\qbezier(2, 2)(4.5, 0)(7, 2)
		\qbezier(7, 2)(9.5, 4)(12, 2)
	%arrowheads
		%on rectangle sides
		\put(2.5, 6){\vector(0, 1){0}}
		\put(11.5, 6){\vector(0, 1){0}}
		%on squiggly lines
		\put(7, 10){\vector(1, 1){0}}
		\put(7, 2){\vector(1, 1){0}}
		%on x and x_\alpha
		\put(2.4, 8.3){\vector(0, 1){0}}
		\put(1.5, 8.5){\vector(0, -1){0}}
%labeling
	%vertices
	\put(.75, 1.5){$A_0$}
	\put(.75, 10.0){$A_1$}
	\put(12.25, 9.75){$A_2$}
	\put(12.25, 1.75){$A_3$}
	%lines
		%rectangle sides
		\put(1.5, 6){$\sigma$}
		\put(11.75, 6){$\tau$}
		\put(0, 8.5){$x_{\alpha}$}
		\put(2.5, 8.75){$x$}
		%squiggly lines
		\put(9, 11.25){$\alpha$}
		\put(6.5, 0.5){$\beta$}
\end{picture}
%	\caption{This figure illustrates the proof of Lemma.}
\end{subfigure}
\caption{(Left) The quadrilaterals $Q=\sigma\circ\alpha\circ\tau^{-1}\circ\beta^{-1}$ and $Q_g=\sigma\circ\gamma_u\circ\tau^{-1}\circ\gamma_v^{-1}$. (Right) This figure illustrates the proof of Lemma~\ref{lem:noselfintersection}(1).}\label{fig:F1}
\end{figure}
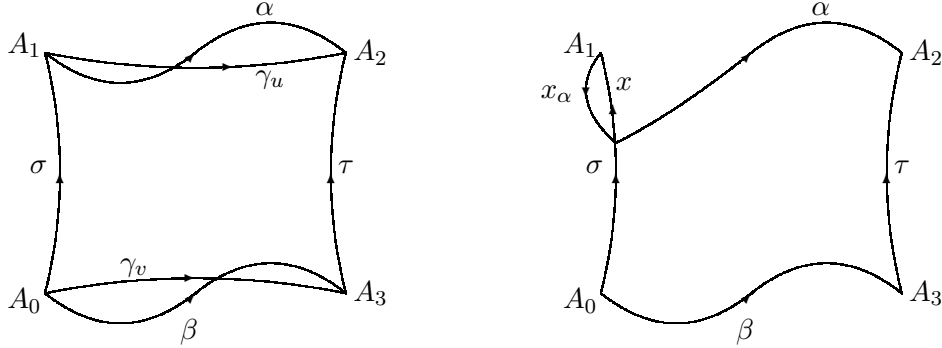

\section{Estimates}\label{HypGpsEstimates}
\subsection{Hyperbolic spaces}
 The following theorem provides useful properties of $k$-local geodesics in a $\delta$-hyperbolic space.
\begin{thm}   \cite[III.H. Theorem1.13]{BridsonHaefliger} \label{thm:klocalgeodesics} Let $X$ be a $\delta$-hyperbolic geodesic space and let $\gamma\colon[a,b]\rightarrow X$ be a $k$-local geodesic, where $k>8\delta$. Then:
\begin{enumerate}
\item $im(\gamma)$ is contained in the $2\delta$-neighbourhood of any geodesic segment $[\gamma(a),\gamma(b)]$ connecting its endpoints.
\item $[\gamma(a),\gamma(b)]$ is contained in the $3\delta$-neighbourhood of $im(\gamma)$, and
\item $\gamma$ is a $(\lambda,\varepsilon)$-quasi-geodesic, where $\varepsilon=2\delta$ and $\lambda=(k+4\delta)/(k-4\delta)$.
\end{enumerate}
\end{thm}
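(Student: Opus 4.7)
The plan is to tackle the three parts in order, using $k>8\delta$ decisively in each; the main obstacle is the delicate constant-juggling in part (3).

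For part (1), I argue by contradiction. Suppose some point of $\gamma$ lies more than $2\delta$ from $c':=[\gamma(a),\gamma(b)]$, and pick $p=\gamma(t_0)$ at which $r:=d(\gamma(t_0),c')$ attains its maximum. For $t_0$ bounded away from the endpoints, the subarc $\gamma|_{[t_0-k/2,\,t_0+k/2]}$ has length $k$ and is therefore a genuine geodesic; let $q_1,q_2$ be its endpoints and $c_1,c_2\in c'$ points realizing $d(q_i,c_i)=d(q_i,c')\leq r$. The geodesic quadrilateral with sides $[q_1,q_2]$, $[q_2,c_2]$, $[c_2,c_1]\subset c'$ and $[c_1,q_1]$ is $2\delta$-thin, so $p$ lies within $2\delta$ of one of the other three sides. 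If $p$ is within $2\delta$ of $c'$ itself, that contradicts $r>2\delta$. Otherwise, say $p$ is within $2\delta$ of some $w\in[q_2,c_2]$; then the estimate $k/2=d(p,q_2)\leq d(p,w)+d(w,q_2)$ forces $d(w,q_2)\geq k/2-2\delta$, hence $d(w,c_2)\leq r-k/2+2\delta$, and so $d(p,c')\leq 2\delta+(r-k/2+2\delta)=r+4\delta-k/2<r$ because $k>8\delta$, again contradicting the maximality of $r$. When $t_0$ lies within $k/2$ of $a$ or $b$, one replaces the corresponding $q_i$ by $\gamma(a)$ or $\gamma(b)$, which already lies on $c'$; the side $[q_i,c_i]$ then degenerates and the argument goes through unchanged.

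For part (2), I use a balance-point argument. Fix $x\in c'$, split $c'$ at $x$ into subgeodesics $c_1=[\gamma(a),x]$ and $c_2=[x,\gamma(b)]$, and consider the continuous function $f(t):=d(\gamma(t),c_1)-d(\gamma(t),c_2)$. Since $f(a)\leq 0\leq f(b)$, there is $t^*$ with $f(t^*)=0$; then $d^*:=d(\gamma(t^*),c_1)=d(\gamma(t^*),c_2)$ equals $d(\gamma(t^*),c')$ and is at most $2\delta$ by part (1). Picking $y_i\in c_i$ realizing these minima, the geodesic $[y_1,y_2]$ may be taken to lie inside $c'$ and hence to pass through $x$. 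By $\delta$-slimness of the geodesic triangle $\gamma(t^*)y_1y_2$, the point $x$ on the side $[y_1,y_2]$ lies within $\delta$ of some $w\in[\gamma(t^*),y_i]$, and $d(w,\gamma(t^*))\leq d^*\leq 2\delta$ then gives $d(x,\gamma(t^*))\leq\delta+2\delta=3\delta$, as required.

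For part (3), the upper bound $d(\gamma(s),\gamma(t))\leq|s-t|$ is immediate from the arc-length parametrization, so only the lower bound requires work; I induct on $\ell:=b-a$. When $\ell\leq k$, $\gamma$ itself is a geodesic and there is nothing to prove. For $\ell>k$, set $L:=(k+4\delta)/2$, which satisfies $L<k$ since $k>4\delta$, so $\gamma|_{[a,a+L]}$ is a genuine geodesic of length $L$. Part (1) furnishes $z\in c'$ with $d(\gamma(a+L),z)\leq 2\delta$; since $d(\gamma(a),z)\geq L-2\delta$, this forces $d(z,\gamma(b))\leq d(\gamma(a),\gamma(b))-L+2\delta$ and hence $d(\gamma(a+L),\gamma(b))\leq d(\gamma(a),\gamma(b))-L+4\delta$. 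Applying the inductive hypothesis to $\gamma|_{[a+L,b]}$ and rearranging yields
\[
\ell\leq\lambda\,d(\gamma(a),\gamma(b))+\lambda\varepsilon+\bigl(L(1-\lambda)+4\lambda\delta\bigr),
\]
and the combination $L=(k+4\delta)/2$ with $\lambda=(k+4\delta)/(k-4\delta)$ makes the parenthesized term vanish identically. The identity $L(\lambda-1)=4\lambda\delta$ that forces $L$ to take exactly this value is where the hypothesis $k>8\delta$ is really used (it is precisely what makes $L<k$, so that the chopped-off initial subarc is a geodesic); getting the constants to saturate the recursion is the delicate step.
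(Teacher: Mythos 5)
The paper does not prove this statement at all: it is imported verbatim from Bridson--Haefliger \cite[III.H.~Theorem~1.13]{BridsonHaefliger} as a black box, so there is no in-paper argument to compare against. Judged on its own, your reconstruction is essentially the standard proof and is correct: part (1) is the usual ``farthest point plus a length-$k$ geodesic subarc centred on it inside a $2\delta$-thin quadrilateral'' contradiction, part (2) is the usual intermediate-value balance-point argument, and part (3) repackages the ``definite progress along $[\gamma(a),\gamma(b)]$'' iteration as an induction with step $L=(k+4\delta)/2$. Three small points should be tidied up. First, the displayed recursion should read $\ell\leq\lambda\,d(\gamma(a),\gamma(b))+\varepsilon+\bigl(L(1-\lambda)+4\lambda\delta\bigr)$, with $\varepsilon$ rather than $\lambda\varepsilon$: the inductive hypothesis contributes its additive constant unscaled, and this matters, since with $\lambda\varepsilon$ the induction would not close on the statement $\ell\leq\lambda d+\varepsilon$. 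The algebra you actually need does give $+\varepsilon$, so this is a transcription slip, not a gap. Second, the quasi-geodesic property requires $|s-t|\leq\lambda\,d(\gamma(s),\gamma(t))+\varepsilon$ for \emph{all} $s<t$, not just the endpoints; this follows by running the identical induction on each restriction $\gamma|_{[s,t]}$, which is again a $k$-local geodesic, but it should be said. Third, your closing remark misattributes the role of $k>8\delta$: the inequality $L<k$ needs only $k>4\delta$, and the hypothesis $k>8\delta$ enters part (3) solely through the appeal to part (1), where it is genuinely needed to make $r+4\delta-k/2<r$ a strict improvement.
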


\begin{cor} \label{localgeodesicquadril}  Every side of the quadrilateral $Q$, defined in Section~\ref{sec:notation}, is contained in the $7\delta$-neighbourhood of the other three sides.
\end{cor}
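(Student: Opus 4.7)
The plan is to replace the $(8\delta+1)$-local geodesic sides $\alpha$ and $\beta$ of $Q$ by honest geodesic segments $\alpha'$ and $\beta'$ in $\hat\Gamma$ with the same endpoints, obtaining a genuine geodesic quadrilateral $Q'=\sigma\circ\alpha'\circ\tau^{-1}\circ(\beta')^{-1}$, apply the usual slimness of geodesic quadrilaterals in a $\delta$-hyperbolic space to $Q'$, and then transfer the resulting estimate back to $Q$ using Theorem~\ref{thm:klocalgeodesics}(1)--(2). Since $8\delta+1>8\delta$, Theorem~\ref{thm:klocalgeodesics} does apply to $\alpha$ and $\beta$.

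First I would record the standard fact that in a $\delta$-hyperbolic geodesic space every side of a geodesic quadrilateral lies in the $2\delta$-neighbourhood of the union of the other three sides. One chooses a diagonal of $Q'$, splits it into two $\delta$-slim triangles, and chases: a point on one side is within $\delta$ of either the diagonal or the adjacent side of its triangle, and the diagonal itself is within $\delta$ of the two sides of the opposite triangle, giving $2\delta$ overall.

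Next I would transfer this side by side. For a point $x$ on a vertical side, say $x\in\sigma$, slimness of $Q'$ yields $y'\in\alpha'\cup\tau\cup\beta'$ with $d_{\hat\Gamma}(x,y')\le 2\delta$; if $y'\in\tau$ we are done, and if $y'\in\alpha'$ (respectively $y'\in\beta'$) then Theorem~\ref{thm:klocalgeodesics}(2) supplies $y\in\alpha$ (respectively $y\in\beta$) with $d_{\hat\Gamma}(y',y)\le 3\delta$, giving a total of at most $5\delta$. The argument for $x\in\tau$ is symmetric. For a point $x$ on a horizontal side, say $x\in\alpha$, Theorem~\ref{thm:klocalgeodesics}(1) first produces $x'\in\alpha'$ with $d_{\hat\Gamma}(x,x')\le 2\delta$; slimness of $Q'$ then supplies $y'\in\sigma\cup\tau\cup\beta'$ within $2\delta$ of $x'$, and if $y'\in\beta'$ a further application of Theorem~\ref{thm:klocalgeodesics}(2) yields $y\in\beta$ within $3\delta$ of $y'$. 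Summing the three detours gives at most $2\delta+2\delta+3\delta=7\delta$, which is the worst case and accounts for the $7\delta$ in the statement.

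There is essentially no obstacle here: the constant $7\delta$ is exactly the bookkeeping cost of the three back-and-forth detours (one use of Theorem~\ref{thm:klocalgeodesics}(1), then quadrilateral slimness, then at most one use of Theorem~\ref{thm:klocalgeodesics}(2)) needed to translate between a $k$-local geodesic and an actual geodesic sharing its endpoints at each end of the comparison.
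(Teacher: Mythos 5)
Your proposal is correct and is essentially the paper's own argument: the auxiliary geodesic quadrilateral $Q'$ you construct is exactly the quadrilateral $Q_g=\sigma\circ\gamma_u\circ\tau^{-1}\circ\gamma_v^{-1}$ already set up in Section~\ref{sec:notation}, and the paper likewise combines its $2\delta$-slimness with parts (1) and (2) of Theorem~\ref{thm:klocalgeodesics} to get $2\delta+2\delta+3\delta=7\delta$. Your bookkeeping of the worst case matches the paper's.
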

\begin{proof}  In the geodesic quadrilateral $Q_g$ every side is contained in the $2\delta$-neighbourhood of the other three sides. Now we apply the first two assertions of Theorem~\ref{thm:klocalgeodesics} to obtain the claim.
\end{proof}

\begin{lem} \label{lem:noselfintersection} Let $g$ be a shortest conjugating element for (cyclic permutations of) $u$ and $v$. Then:
\begin{enumerate}
\item Each one of the intersections $\alpha\cap\sigma$, $\alpha\cap\tau$, $\beta\cap\sigma$ and $\beta\cap\tau$ is a single point, and this point is a vertex of the quadrilateal $Q$.
\item The paths $\sigma\circ\gamma_u$ and $\tau^{-1}\circ\gamma_v^{-1}$ are $(3,0)$- quasi-geodesics. The paths $\sigma\circ\alpha$ and $\tau^{-1}\circ\beta^{-1}$ are $(2\lambda+1,\varepsilon)$- quasi-geodesics, where $\lambda$ and $\epsilon$ are as in Theorem~\ref{thm:klocalgeodesics}.
\end{enumerate}
\end{lem}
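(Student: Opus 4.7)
The common tool for both assertions is the following minimality principle: any group element conjugating some cyclic permutation of $u$ to some cyclic permutation of $v$ has relative length at least $|g|_{\hat{\Gamma}}$. Two families of auxiliary conjugators read off from the quadrilateral $Q$ will do all the work. For $u_1$ a prefix of the label of $\gamma_u$ or $\alpha$, and $u=u_1u_2$, the element $gu_1$ conjugates the cyclic permutation $u_2u_1$ of $u$ to $v$; for $v_2$ a suffix of the label of $\gamma_v$ or $\beta$, and $v=v_1v_2$, both $v_2g$ and $v_1^{-1}g$ conjugate $u$ to the cyclic permutation $v_2v_1$ of $v$. All three identities are immediate from $gug^{-1}=v$ by direct computation. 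For part (1), I would suppose toward contradiction that $\sigma\cap\alpha$ contains a vertex $x\neq A_1$; the two descriptions of $x$ along $\sigma$ and along $\alpha$ give $gu_1=g_x$ in $G$ for $g_x$ a proper prefix of $g$, and since $\sigma$ is a relative geodesic, $|gu_1|_{\hat{\Gamma}}\leq d_\sigma(A_0,x)<|g|_{\hat{\Gamma}}$, contradicting minimality. The other three intersections follow the same template, each producing a sub-path of $\sigma$ or $\tau$ of length strictly less than $|g|$ that represents an auxiliary conjugator: for $\beta\cap\sigma$, the sub-path $\sigma|_{x\to A_1}$ represents $v_1^{-1}g$; for $\beta\cap\tau$, the sub-path $\tau|_{x\to A_2}$ represents $v_2g$; and for $\alpha\cap\tau$, the labels of $\alpha|_{x\to A_2}$ and $\tau|_{x\to A_2}$ coincide in $G$, which forces the prefix $g_1$ of $g$ along $\tau$ to be itself a conjugator of a cyclic permutation of $u$ to $v$.

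For (2), take $P\in\sigma$ and $Q\in\gamma_u$ with $s=d_\sigma(P,A_1)$, $t=d_{\gamma_u}(A_1,Q)$, and $d=d_{\hat{\Gamma}}(P,Q)$. Writing $u=u_1u_2$ with $u_1$ the label of $\gamma_u|_{A_1\to Q}$, the concatenation of $\sigma|_{A_0\to P}$ with a geodesic shortcut $[P,Q]$ represents $gu_1$ in relative length $|\sigma|-s+d$, so minimality forces $|g|_{\hat{\Gamma}}\leq|\sigma|-s+d$, i.e., $s\leq d$. Because $\gamma_u$ is a relative geodesic, $t=d_{\hat{\Gamma}}(A_1,Q)\leq s+d\leq 2d$ by the triangle inequality; therefore $s+t\leq 3d$, proving the $(3,0)$-quasi-geodesic bound. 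The proof for $\sigma\circ\alpha$ is identical except that $\alpha$ is only $(\lambda,\varepsilon)$-quasi-geodesic by Theorem~\ref{thm:klocalgeodesics}, giving $t\leq\lambda(s+d)+\varepsilon\leq 2\lambda d+\varepsilon$ and hence $s+t\leq(2\lambda+1)d+\varepsilon$.

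For $\tau^{-1}\circ\gamma_v^{-1}$, equivalently $\gamma_v\circ\tau$, the main obstacle is that the element $vg_1$ at a point $R_2\in\tau$ is not itself a conjugator of cyclic permutations, so the $\sigma$-side template does not apply verbatim. My fix is to left-translate the shortcut by $v_1^{-1}$, where $R_1=V_{v_1}$ is the chosen point on $\gamma_v$ and $v=v_1v_2$. Left multiplication is an isometry of $\hat{\Gamma}$ sending $R_1\mapsto V_1$ and $A_2\mapsto V_{v_2g}$, and concatenating the translated shortcut with $v_1^{-1}\tau|_{R_2\to A_2}$ produces a path of relative length $c+|\tau|-b$ from $V_1$ to $V_{v_2g}$, where $a=d_{\gamma_v}(R_1,A_3)$, $b=d_\tau(A_3,R_2)$, and $c=d_{\hat{\Gamma}}(R_1,R_2)$. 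Since $v_2g$ conjugates $u$ to the cyclic permutation $v_2v_1$ of $v$, minimality gives $|\tau|\leq c+|\tau|-b$, hence $b\leq c$. Combining with $a=d_{\hat{\Gamma}}(A_3,R_1)\leq b+c\leq 2c$ (from $\gamma_v$ being a relative geodesic and the triangle inequality along $A_3\to R_2\to R_1$) yields $a+b\leq 3c$. The argument for $\tau^{-1}\circ\beta^{-1}$ replaces the geodesic bound on $a$ by $a\leq\lambda(b+c)+\varepsilon\leq 2\lambda c+\varepsilon$ from Theorem~\ref{thm:klocalgeodesics}, producing the $(2\lambda+1,\varepsilon)$ constant.
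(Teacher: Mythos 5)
Your proof is correct and follows essentially the same route as the paper's: part (1) by reading off a strictly shorter conjugator of a cyclic permutation from a proper sub-path of $\sigma$ or $\tau$, and part (2) by using minimality to get $d_\sigma(P,A_1)\leq d_{\hat{\Gamma}}(P,Q)$ and then the (quasi-)geodesic property of the horizontal side. You merely make explicit what the paper leaves as ``the other cases are similar,'' including the left-translation bookkeeping on the $\tau$-side.
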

\begin{proof} (1) Suppose $\alpha\cap\sigma$ contains more than just one point. Let $B$ be a point in  $\alpha\cap\sigma$, different from $A_1$. Denote by $x$ the label of the segment of $\sigma$ joining $B$ and $A_1$. Note that whereas the label $x_{\alpha}$ of the subpath of $\alpha$ joining $A_1$ and $B$ may differ from $x$ in $F(S)$, necessarily $x^{-1}=x_{\alpha}$ in $G$. Let $g_1$ and $u_1$ be the labels of the remaining subpaths of $g$ and $u$, correspondingly. We have $1=gug^{-1}v^{-1}=g_1xx_{\alpha}u_1x^{-1}g_1^{-1}v^{-1}=g_1u_1x_{\alpha}g_1^{-1}v^{-1}$. It follows that $g_1$ is a shorter conjugator for a conjugate of $u$ and $v$, which is a contradiction. The other cases are similar.

(2) Let $C$ be an arbitrary point on $\sigma$, and let $D$ be a point on $\alpha$. Note that $d(C,D)\geq d(C,A_1)$ because $g$ is a shortest conjugating element. Also, note that $$l_{\alpha}(A_1,D)\leq \lambda d(A_1,D)+\varepsilon\leq \lambda(d(A_1,C)+d(C,D))+\varepsilon\leq 2\lambda d(C,D)+\varepsilon.$$ It follows from the latter inequality and from the statement (1) of this lemma that $$l_{\sigma\circ\alpha}(C,D) = d(C,A_1)+l_{\alpha}(A_1,D)\leq (2\lambda+1) d(C,D)+\varepsilon.$$ 

To prove the assertion for $\sigma\circ\gamma_u$ and $\tau^{-1}\circ\gamma_v^{-1}$, note that $\gamma_u$ and $\gamma_v$ are $(1,0)$-quasi-geodesics.
\end{proof}

\begin{cor} \label{cor:72delta}  If $\alpha$ and $\beta$ are $(8\delta+1)$-local geodesics then $\alpha$ and $\beta$ are $(3,2\delta)$-quasi-geodesics and the paths $\sigma\circ\alpha$ and $\tau^{-1}\circ\beta^{-1}$ are $(7,2\delta)$- quasi-geodesics.
\end{cor}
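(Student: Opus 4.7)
The plan is to read off both assertions by specializing the general statements that have already been proved, namely Theorem~\ref{thm:klocalgeodesics}(3) and Lemma~\ref{lem:noselfintersection}(2), to the value $k = 8\delta+1$.

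First I would verify the quasi-geodesic constants for $\alpha$ and $\beta$ themselves. Since $k = 8\delta + 1 > 8\delta$, Theorem~\ref{thm:klocalgeodesics}(3) applies directly and produces the parameters
\[
\varepsilon = 2\delta, \qquad \lambda = \frac{k + 4\delta}{k - 4\delta} = \frac{12\delta + 1}{4\delta + 1} = 3 - \frac{2}{4\delta+1} < 3.
\]
Hence $\alpha$ and $\beta$ are $(\lambda,2\delta)$-quasi-geodesics with $\lambda < 3$, and monotonicity in the multiplicative constant of the quasi-geodesic inequalities then upgrades this to $(3,2\delta)$-quasi-geodesics. This gives the first assertion.

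Second, I would feed the same constants into Lemma~\ref{lem:noselfintersection}(2), whose conclusion for $\sigma\circ\alpha$ and $\tau^{-1}\circ\beta^{-1}$ is that they are $(2\lambda+1,\varepsilon)$-quasi-geodesics, with $\lambda$ and $\varepsilon$ taken from Theorem~\ref{thm:klocalgeodesics}. Substituting $\lambda < 3$ and $\varepsilon = 2\delta$ yields $2\lambda + 1 < 7$, and once again monotonicity gives that $\sigma\circ\alpha$ and $\tau^{-1}\circ\beta^{-1}$ are $(7,2\delta)$-quasi-geodesics.

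There is essentially no obstacle here: both assertions are numeric consequences of results already in hand. The only mild care needed is to observe that passing from a $(\lambda,\varepsilon)$-quasi-geodesic bound to a $(\lambda',\varepsilon)$-quasi-geodesic bound with $\lambda' \geq \lambda$ is automatic, so that the non-integer constant $(12\delta+1)/(4\delta+1)$ arising from Theorem~\ref{thm:klocalgeodesics}(3) may be replaced by the cleaner value $3$, and correspondingly $2\lambda+1$ by $7$, without changing the additive constant $\varepsilon = 2\delta$.
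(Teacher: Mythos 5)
Your proposal is correct and is exactly the computation the paper intends: the corollary is stated without proof as an immediate consequence of Theorem~\ref{thm:klocalgeodesics}(3) and Lemma~\ref{lem:noselfintersection}(2), and substituting $k=8\delta+1$ gives $\lambda=(12\delta+1)/(4\delta+1)<3$, $\varepsilon=2\delta$, and $2\lambda+1<7$, with the monotonicity remark justifying the rounding up of the multiplicative constants.
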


The following lemma is the first step to the proof of the existence of a linear bound on the length of a shortest conjugating element, stated in Theorem~\ref{thm:IntroEstimate}. The argument in the proof is due to Lysenok~\cite[Lemma 10]{Lysenok90}.
\begin{lem}\label{lem:linearbound} (Lysenok) Let $u$ and $v$ be conjugate in $G$, and let $g\in G$ be such that $v=gug^{-1}$, and $g$ has the smallest possible relaive length among all the elements that conjugate a cyclic conjugate of $u$ and a cyclic conjugate of $v$. If $|g|_{\hat{\Gamma}}>|u|_{\hat{\Gamma}}+|v|_{\hat{\Gamma}}+4\delta+2$ then both $u$ and $v$ are conjugate in $G$ to an element $z\in G$ with $|z|_{\hat{\Gamma}}\leq 4\delta$.
\end{lem}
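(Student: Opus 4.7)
The plan is, under the hypothesis $|g|_{\hat{\Gamma}}>|u|_{\hat{\Gamma}}+|v|_{\hat{\Gamma}}+4\delta+2$, to exhibit an element $z\in G$ conjugate in $G$ to both $u$ and $v$ with $|z|_{\hat{\Gamma}}\leq 4\delta$. I would work inside the geodesic quadrilateral $Q_g=\sigma\circ\gamma_u\circ\tau^{-1}\circ\gamma_v^{-1}$ based at the identity in $\hat{\Gamma}$, whose sides have lengths $|g|,|u|,|g|,|v|$. Two pieces of hyperbolic geometry drive the argument: (i) $Q_g$ is $2\delta$-thin; and (ii) $\sigma$ together with the left-translate $\tau_0:=v^{-1}\tau$ form a bigon of two geodesics from $1$ to $g$ in $\hat{\Gamma}$, so $d(\sigma(s),\tau_0(s))\leq 2\delta$ for every $s\in[0,|g|_{\hat{\Gamma}}]$.

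First I would pick a parameter $k$ with $|v|_{\hat{\Gamma}}+2\delta<k<|g|_{\hat{\Gamma}}-|u|_{\hat{\Gamma}}-2\delta$; the hypothesis makes this range nonempty. Setting $C=\sigma(k)$, the distance inequalities $d(A_0,C)>|v|+2\delta$ and $d(A_1,C)>|u|+2\delta$ preclude $C$ from being within $2\delta$ of either horizontal side, so the thin-quadrilateral property forces $C$ to be within $2\delta$ of some $D=\tau(t)\in\tau$. Writing $C=g_1$ and $D=vv_1$ as elements of $G$, with splittings $g=g_1h_1=v_1v_2$, the short connector yields $z_0:=g_1^{-1}vv_1$ of length at most $2\delta$.

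The candidate element is
\[
z:=g_1^{-1}vg_1=h_1 u h_1^{-1},
\]
which is manifestly a conjugate in $G$ of both $v$ and $u$. Via the identity $z=z_0\cdot(v_1^{-1}g_1)$, the desired bound $|z|_{\hat{\Gamma}}\leq 4\delta$ reduces to showing $|v_1^{-1}g_1|_{\hat{\Gamma}}\leq 2\delta$. The bigon estimate (ii) delivers exactly this bound whenever $v_1=\tau_0(k)$, that is, whenever $D$ is the point $\tau(k)$ sitting at the same parameter on $\tau$ as $C$ does on $\sigma$.

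The main obstacle is that the closest point $D$ supplied by thin-quadrilateral a priori sits at a parameter $t\neq k$, so the two $2\delta$-estimates concern different choices of $v_1$. I would reconcile them by an intermediate-value argument on the map $k\mapsto t(k)$ recording the parameter on $\tau$ of a closest point to $\sigma(k)$: this map is coarsely monotone with $t(0)\approx 0$ and $t(|g|)\approx |g|$, so it must meet the diagonal at some $k_0$ in the admissible middle range. At such $k_0$ the closest point is $\tau(k_0)$, and taking $v_1=\tau_0(k_0)$ lets both $2\delta$-bounds apply to the same $v_1$; summing them gives $|z|_{\hat{\Gamma}}\leq 4\delta$, completing the construction.
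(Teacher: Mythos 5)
Your construction of the candidate element $z=g_1^{-1}vg_1=h_1uh_1^{-1}$ and the decomposition $z=z_0\cdot(v_1^{-1}g_1)$ match the paper's: everything reduces to showing that the point of $\tau$ \emph{at the same parameter} $k$ as $C=\sigma(k)$ lies within $2\delta$ of the closest point $D=\tau(t)$. But the way you try to close that gap --- the bigon estimate plus an intermediate-value argument on $k\mapsto t(k)$ --- does not work, and the telltale sign is that your argument never uses the hypothesis that $g$ has minimal relative length. Without minimality the lemma is simply false: take $u=v$ a long element not conjugate to anything of $\hat\Gamma$-length $\le 4\delta$ and $g=u^n$ in its centralizer. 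In that example the quadrilateral degenerates onto the axis of $u$, the closest point on $\tau$ to $\sigma(k)$ sits at parameter $t(k)\approx k-|v|_{\hat\Gamma}$ for \emph{every} $k$, and in particular $t(|g|)\approx |g|-|u|_{\hat\Gamma}$, not $\approx|g|$. Your boundary claims ``$t(0)\approx 0$, $t(|g|)\approx|g|$'' are only true up to errors of size $|v|_{\hat\Gamma}$ and $|u|_{\hat\Gamma}$ respectively, which are not controlled by $\delta$; so $t(k)-k$ can stay uniformly negative and never meet the diagonal. (Even where a discrete sign change did occur, coarse continuity only bounds the jump by roughly $4\delta$ per unit step, so you would land at $|t(k_0)-k_0|\le 4\delta$ rather than $0$, and the final estimate would overshoot $4\delta$.)

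The missing idea is exactly where the paper spends its effort: minimality of $\sigma$ as a path from $\gamma_v$ to $\gamma_u$ forces $|t-k|\le 2\delta$. Concretely, if $t<k-2\delta$ then $[A_3,\tau(t)]\circ[\tau(t),\sigma(k)]\circ\sigma|_{[k,|g|]}$ is a path from a point of $\gamma_v$ to a point of $\gamma_u$ of length $<t+2\delta+(|g|-k)<|g|$, and if $t>k+2\delta$ then $\sigma|_{[0,k]}\circ[\sigma(k),\tau(t)]\circ[\tau(t),A_2]$ is such a path of length $<k+2\delta+(|g|-t)<|g|$; either contradicts the choice of $g$. With $|t-k|\le 2\delta$ in hand, $d(\sigma(k),\tau(k))\le d(\sigma(k),\tau(t))+|t-k|\le 4\delta$ and your $z$ has the required length, with no bigon or intermediate-value step needed. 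I would also note that the thinness you invoke for the quadrilateral should be routed through the diagonal $\mu$ from $A_0$ to $A_2$ (two $\delta$-thin triangles), using the inequalities $k>|v|_{\hat\Gamma}+2\delta$ and $|g|-k>|u|_{\hat\Gamma}+2\delta$ to rule out proximity to $\gamma_v$ and $\gamma_u$ in the respective triangles; that part of your sketch is correct in substance.
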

\begin{proof}   We consider the quadrilateral $Q$ corresponding to the equality $v=gug^{-1}$, as before. Assume that $|g|_{\hat{\Gamma}}>|u|_{\hat{\Gamma}}+|v|_{\hat{\Gamma}}+4\delta+2$. 
Let $\mu$ be the diagonal in the quadrilateral $Q$ joining the identity $A_0$ and the vertex $A_2$; $\mu$ is labeled by a word $w_{\mu}$ such that $w_{\mu}=gu=vg$, see Figure~\ref{fig:F2}(Left). Let $\sigma_1$ and $\sigma_2$ be such that $\sigma=\sigma_1\circ\sigma_2$, $|\sigma_1|_{\hat{\Gamma}}>|v|_{\hat{\Gamma}}+2\delta$ and  $|\sigma_2|_{\hat{\Gamma}}>|u|_{\hat{\Gamma}}+2\delta$. Then the endpoint $D=t(\sigma_1)$ of $\sigma_1$ is $\delta$-close to a point $M$ on $\mu$, and $M$ is $\delta$-close to a point $T_0$ on $\tau$. Let $\tau_1$ and $\tau_2$ be such that $\tau=\tau_1\circ\tau_2$ and  $|\sigma_1|_{\hat{\Gamma}}=|\tau_1|_{\hat{\Gamma}}$, and let $T_1=t(\tau_1)$. Then necessarily $d_{\hat{\Gamma}}(T_0,T_1)\leq 2\delta$, for if not then either $\sigma_1\circ[D,T_0]\circ[T_0,A_2]$, or $[A_3,T_0]\circ[T_0,D]\circ\sigma_2$ was a path shorter than $\sigma$ joining a point on $\beta$ and a point on $\alpha$, which would be a contradiction. It follows that $d_{\hat{\Gamma}}(D,T_1)\leq 4\delta$. If $z$ is the label of a relative geodesic joining $D$ and $T_1$ then $z$ is conjugate to both $u$ and $v$ and has the required relative length.
\end{proof}
%
% Figure 3 and 4
\begin{figure}
\begin{subfigure}{0.4\textwidth}
%\label{fig:LemmaLysenok}
\setlength{\unitlength}{.4cm}
\begin{picture}(16, 22)
%RECTANGLE
	%LEFT SIDE
		\qbezier(2, 2)(3, 6)(2, 20)					%curve
		\put(2.5, 6){\vector(0, 1){0}}				%arrowhead
		\put(1.5, 6){$\sigma_1$}						%labeling
		\put(2.25, 16){\vector(0, 1){0}}				%arrowhead
		\put(1.1, 16){$\sigma_2$}						%labeling
		\put(2.4,12.9){\circle*{.2}}	                          % bold vertex
		\put(1.25, 13.15){$D$}	                         % labeling of bold vertex
	%RIGHT SIDE
		\qbezier(12, 2)(11, 6)(12, 20)				%curve
		\put(11.5, 6){\vector(0, 1){0}}				%arrowhead
		\put(11.75, 6){$\tau_1$}						%labeling
		\put(11.75, 16){\vector(0, 1){0}}				%arrowhead
		\put(12.25, 16){$\tau_2$}						%labeling
		\put(11.5,9.7){\circle*{.2}}	                          % bold vertex
		\put(11.85, 9.25){$T_0$}	                         % labeling of bold vertex
		\put(11.55,12.9){\circle*{.2}}	                          % bold vertex
		\put(11.95, 13.15){$T_1$}	                         % labeling of bold vertex
   %DIAGONAL
       	\qbezier(2, 2)(7, 11)(12, 20)					%curve
     	\put(9.6, 15.5){\vector(1, 1){0}}				%arrowhead
        \put(8.2, 16){$\mu$}						%labeling
        \put(7.0,11){\circle*{.2}}	                          % bold vertex
        \put(6, 11.45){$M$}	                         % labeling of bold vertex
     %LITTLE SEGMENTS ACROSS
     	\qbezier(2.4, 12.9)(4, 11)(7.0,11)					%curve from D to M
     	\qbezier(7.0,11)(9.3, 10.2)(11.5,9.7)		%curve from M to T_0	
%SQUIGGLY LINES
	%TOP
		\qbezier(2, 20)(4.5, 18)(7, 20)				%left part
		\qbezier(7, 20)(9.5, 22)(12, 20)			%right part
		\put(7, 20){\vector(1, 1){0}}					%arrowhead
		\put(9, 21.25){$\alpha$}					%labeling
	%BOTTOM
		\qbezier(2, 2)(4.5, 0)(7, 2)				%left part
		\qbezier(7, 2)(9.5, 4)(12, 2)				%right part
		\put(7, 2){\vector(1, 1){0}}					%arrowhead
		\put(6.5, 0.5){$\beta$}						%labeling
%VERTEX LABELING
	\put(.75, 1.5){$A_0$}						%bottom left
	\put(.75, 20.0){$A_1$}						%top left
	\put(12.25, 19.75){$A_2$}					%top right
	\put(12.25, 1.75){$A_3$}					%bottom right
\end{picture}
%	\caption{If $d(T_0,T_1)>2\delta$ then $[A_3,T_0]\circ[T_0,D]\circ\sigma_2$ is shorter than $\sigma$, see Lemma.}
\end{subfigure}
\qquad \qquad 
\begin{subfigure}{0.4\textwidth}
%\label{fig:Lemma312}
\setlength{\unitlength}{.4cm}
\begin{picture}(16, 22)
% TOP RECTANGLE
		\put(6.75, 17.3){$Q_i$}						%labeling
	%LEFT SIDE
		\qbezier(2, 14)(3, 16)(2, 20)					%curve
		\put(2.5, 17){\vector(0, 1){0}}				%arrowhead
		\put(1.5, 16){$f_i$}						%labeling
	%BOTTOM SIDE
		\qbezier(2, 14)(7, 13)(12, 14)					%curve
		\put(4, 13.7){\vector(1, 0){0}}				%arrowhead
		\put(3.5, 12.75){$a$}			%labeling
		\put(10, 13.7){\vector(1, 0){0}}				%arrowhead
		\put(9.5, 12.75){$b$}			%labeling
		\put(5,13.6){\circle*{.3}}	                          % bold vertex
	%TOP SIDE
			\qbezier(2, 20)(7, 19)(12, 20)					%curve
				\put(7, 19.5){\vector(1, 0){0}}				%arrowhead
				\put(6.5, 20.75){$c_i$}			%labeling
	%RIGHT SIDE
		\qbezier(12, 14)(11, 16)(12, 20)				%curve
		\put(11.6, 17.5){\vector(0, 1){0}}				%arrowhead
		\put(11.75, 16){$f_i$}						%labeling
% BOTTOM RECTANGLE
	\put(6.75, 6.3){$Q_{i+1}$}						%labeling
    %LEFT SIDE
		\qbezier(2, 2)(3, 5)(2, 8)					%curve
		\put(2.5, 5){\vector(0, 1){0}}				%arrowhead
		\put(.5, 5){$f_{i+1}$}						%labeling
	%RIGHT SIDE
		\qbezier(12, 2)(11, 7)(12, 8)				%curve
		\put(11.6, 5.5){\vector(0, 1){0}}				%arrowhead
		\put(11.75, 6){$f_{i+1}$}						%labeling
	%BOTTOM SIDE
		\qbezier(2, 2)(7, 3)(12, 2)					%curve
			\put(7, 2.5){\vector(1, 0){0}}				%arrowhead
			\put(6.5, .75){$c_{i+1}$}			%labeling
	% TOP SIDE
		\qbezier(2, 8)(7, 9)(12, 8)					%curve
		\put(6, 8.5){\vector(1, 0){0}}				%arrowhead
		\put(5, 8.75){$b$}			%labeling
		\put(10.5, 8.3){\vector(1, 0){0}}				%arrowhead
		\put(10, 8.75){$a$}			%labeling
		\put(9,8.45){\circle*{.3}}	                          % bold vertex
\end{picture}
%	\caption{Need to adjust $Q_i$ before gluing, see Lemma.}
\end{subfigure} 
\caption{(Left) If $d(T_0,T_1)>2\delta$ then $[A_3,T_0]\circ[T_0,D]\circ\sigma_2$ is shorter than $\sigma$, see Lemma~\ref{lem:linearbound}. (Right) Need to adjust $Q_i$ before gluing, see Lemma~\ref{lem:conjugatorforhyperbolics}.}\label{fig:F2}
\end{figure}
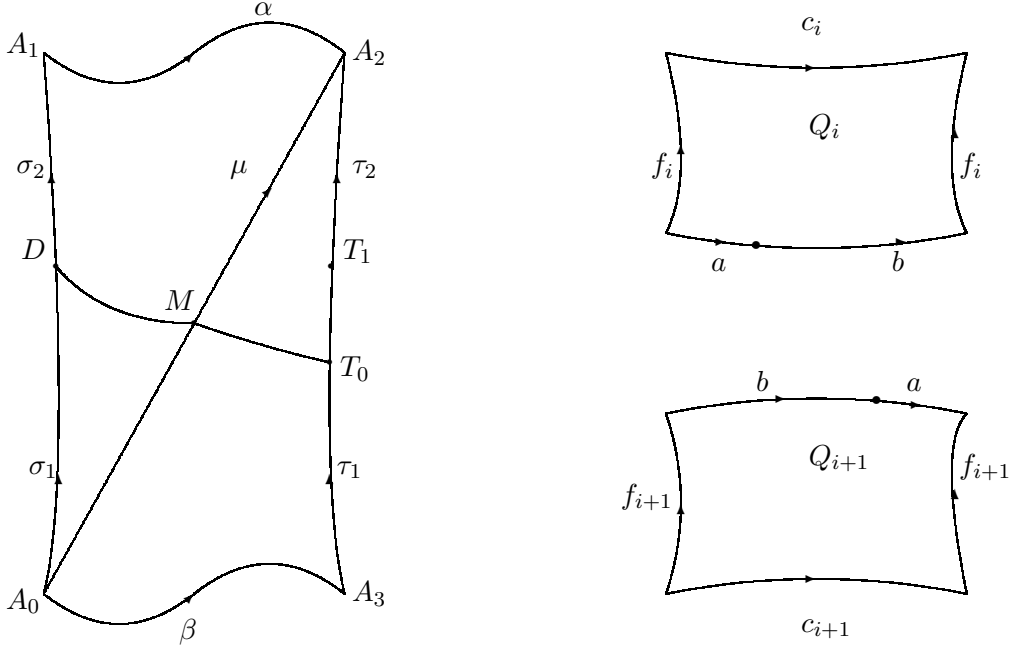

The following lemma is stated in \cite[III.$\Gamma.$ Lemma 2.11]{BridsonHaefliger} for hyperbolic groups; the proof below is analogous to the proof of \cite[III.$\Gamma.$ Lemma 2.9]{BridsonHaefliger}, and we include it for completeness.

\begin{lem}\label{lem:hyperbolicdichotomy} Let $\Gamma$ be a $\delta$-hyperbolic metric space such that paths in $\Gamma$ are labeled by elements of a group $G$. Let $\alpha$ and $\beta$ be  cyclic  $(8\delta+1)$-local geodesic paths in $\Gamma$ with the labels $lab(\alpha)=u$ and $lab(\beta)=v$. Suppose that $u$ and $v$ are conjugate in $G$, and let $\sigma$ and $\tau$ be two geodesics
in $\Gamma$ with $lab(\sigma)=lab(\tau)=g$ such that $gug^{-1}v^{-1}=1$ in $G$, $\sigma\circ\alpha\circ\tau^{-1}\circ\beta^{-1}$ is a closed path in $\Gamma$, and for any geodesic $\rho\in \Gamma$ joining a point on $\alpha$ and a point on $\beta$, $l_{\rho}\geq l_{\sigma}$. Then 
\begin{enumerate}
\item $\max\{l_{\alpha},l_{\beta}\}\leq 86\delta+3$, or else
\item $l_{\sigma}\leq 7\delta$.
\end{enumerate}
If $\Gamma$ is a Cayley graph of $G$ then there exists a word $g$ of length at most $7\delta+1$ such that $gu'g^{-1}=v'$ in $G$, where $u'$ and $v'$ are cyclic permutations of $u$ and $v$.
\end{lem}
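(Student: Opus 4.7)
The plan is to argue by contraposition: assume $l_\sigma>7\delta$ (so conclusion (2) fails) and deduce (1). By Corollary~\ref{cor:72delta} the paths $\alpha$ and $\beta$ are $(3,2\delta)$-quasi-geodesics, and by Corollary~\ref{localgeodesicquadril} every point $\alpha(t)$ lies in the $7\delta$-neighbourhood of $\sigma\cup\tau\cup\beta$. If some $\alpha(t)$ were within $7\delta$ of $\beta$, the connecting geodesic would join $\alpha$ to $\beta$ with length at most $7\delta$, contradicting the minimality of $l_\sigma$; hence every $\alpha(t)$ is $7\delta$-close to $\sigma\cup\tau$.

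The closed sets $I_\sigma=\{t:d(\alpha(t),\sigma)\leq 7\delta\}$ and $I_\tau=\{t:d(\alpha(t),\tau)\leq 7\delta\}$ then cover $[0,l_\alpha]$ and contain $0$ and $l_\alpha$ respectively (because $A_1\in\sigma$ and $A_2\in\tau$). Connectedness of $[0,l_\alpha]$ produces $s^*\in I_\sigma\cap I_\tau$; I fix $M=\alpha(s^*)$ with nearest points $N_1\in\sigma$, $N_2\in\tau$, and set $r_1=d(A_0,N_1)$, $r_2=d(A_3,N_2)$. Now minimality of $\sigma$ applied to geodesics from $M\in\alpha$ to the endpoints $A_0,A_3\in\beta$ gives $l_\sigma\leq d(A_0,M)\leq r_1+7\delta$ and $l_\sigma\leq d(A_3,M)\leq r_2+7\delta$, hence $r_i\geq l_\sigma-7\delta$. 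The $(3,2\delta)$-quasi-geodesic property applied to the two subarcs of $\alpha$ meeting at $M$, together with the triangle-inequality bounds $d(A_1,M)\leq (l_\sigma-r_1)+7\delta$ and $d(A_2,M)\leq (l_\sigma-r_2)+7\delta$, yields
\[
l_\alpha\leq 3\bigl(2l_\sigma-(r_1+r_2)\bigr)+46\delta\leq 88\delta,
\]
an absolute linear bound in $\delta$ of the shape claimed in (1). The symmetric argument applied to $\beta$ (producing a transition point on $\beta$ that is close to both $\sigma$ and $\tau$) gives the matching bound on $l_\beta$.

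The main technical point is the coexistence of two opposing estimates at $M$: minimality of $\sigma$ pushes $N_1,N_2$ far from $A_0,A_3$, whereas the quasi-geodesic bound keeps $N_1,N_2$ close to $A_1,A_2$, and together these force an absolute cap on $l_\alpha$. For the last assertion, when $\Gamma$ is a Cayley graph and (2) holds, the geodesic $\sigma$ has length at most $7\delta$, so its label $g$ is a word of length at most $\lceil 7\delta\rceil\leq 7\delta+1$ conjugating the chosen cyclic permutations of $u$ and $v$. To recover the precise stated constant $86\delta+3$ one should use the sharper value $\lambda=(12\delta+1)/(4\delta+1)$ from Theorem~\ref{thm:klocalgeodesics}(3) and/or the finer observation, implicit in the proof of Corollary~\ref{localgeodesicquadril}, that $\alpha(t)$ actually lies within $4\delta$ of $\sigma\cup\tau$ whenever it is not $7\delta$-close to $\beta$; the rough outline above produces a slightly larger linear bound but of the same form.
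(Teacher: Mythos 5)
Your argument is sound in structure and rests on exactly the same three ingredients as the paper's proof: Corollary~\ref{localgeodesicquadril}, the minimality of $\sigma$ among geodesics joining $\alpha$ to $\beta$, and the $(3,2\delta)$-quasi-geodesic property of $\alpha$ from Corollary~\ref{cor:72delta}. The implementation differs in one respect: the paper takes $B$ to be the \emph{midpoint} of the longer of the two horizontal sides and runs a trichotomy on which of the other three sides $B$ is $7\delta$-close to (closeness to $\beta$ giving conclusion (2) directly, closeness to $\sigma$ or $\tau$ giving $d(A_1,B)\leq 14\delta$ and hence the bound on $l_\alpha$), whereas you organize the proof contrapositively and use a connectedness argument to produce a transition point $M$ that is simultaneously $7\delta$-close to $\sigma$ and to $\tau$, then bound the two subarcs of $\alpha$ at $M$ separately. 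Your transition-point device is a legitimate and arguably cleaner way to get both $d(A_1,M)\leq 14\delta$ and $d(A_2,M)\leq 14\delta$ at once; the paper's midpoint choice buys the same conclusion with one fewer topological step. Your treatment of the final Cayley-graph assertion is also fine.

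The one substantive issue is the constant. Your computation yields $l_\alpha\leq 88\delta$, and $88\delta\leq 86\delta+3$ fails for $\delta>3/2$, so as written you have proved a strictly weaker statement than the lemma claims; since the threshold $86\delta+3$ is quoted verbatim downstream (Theorems~\ref{thm:conjugatorforlongelts}, \ref{thm:boundforshortuv}, \ref{thm:parabolic}, \ref{thm:alglongelts} and the definition of $\mathcal{L}_8$), the exact value matters. Of the two repairs you sketch, only the second one actually closes the gap: replacing $\lambda=3$ by $(12\delta+1)/(4\delta+1)$ in your inequality still gives a bound of the form $88\delta-14+o(1)$, which exceeds $86\delta+3$ for large $\delta$. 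By contrast, the refinement of Corollary~\ref{localgeodesicquadril} you mention --- that a point of $\alpha$ not $7\delta$-close to $\beta$ is in fact $4\delta$-close to $\sigma\cup\tau$ (the $2\delta+2\delta$ versus $2\delta+2\delta+3\delta$ accounting in that corollary's proof) --- turns your $14\delta$ into $8\delta$ and gives $l_\alpha\leq 52\delta$, comfortably below $86\delta+3$. You should carry out that version explicitly rather than leaving it as a remark.
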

\begin{proof} W.l.o.g., assume that $l_{\alpha}\geq l_{\beta}$. Let $B$ be the midpoint of $\alpha$.  By Corollary~\ref{localgeodesicquadril}, $B$ is $7\delta$-close to a point $E$ on one of the other three sides of $Q$. Suppose that $E\in\sigma$, and let $A_1=\sigma_+=\alpha_-$ be the vertex of $Q$ where $\sigma$ and $\alpha$ meet; this vertex is unique, by Lemma~\ref{lem:noselfintersection}(1). Note that $d(A_1,E)\leq d(B,E)\leq 7\delta$ because $\sigma$ is a shortest path joining a point on $\alpha$ and a point on $\beta$, by assumption. It follows that $d(A_1,B)\leq 14\delta$, which implies that $d(A_1,A_2)\leq 28\delta+1$, where $A_2=\alpha_+$. Since by Corollary~\ref{cor:72delta}, $\alpha$ is a $(3,2\delta)$-quasi-geodesic, we have that 
$l_{\alpha}\leq 3 d(A_1,A_2)+2\delta\leq 86\delta+3.$
 On the other hand, if $l_{\alpha}\geq 86\delta+3$ then necessarily $d(A_1,A_2)>28\delta+1$, and the argument above shows that $B$ is $7\delta$-close to a point on $\beta$, which proves (2). 
To prove the last assertion note that the vertices of $\Gamma$ closest to the points $B$ and $E$, correspondingly, are at most distance $7\delta+1$ apart. Also, the length of a path joining two vertices in the Cayley graph equals the length of its label in the word metric.
\end{proof}

\subsection{Relatively hyperbolic groups}\label{sec:RelHypGps}

\begin{lem} \label{lem:localgeodnobacktracking}
If $\alpha$ is a relative $(8\delta+1)$-local geodesic then $\alpha$ does not backtrack. In other words, if $\alpha$ travels a nonzero distance inside a left coset $gP$ of a parabolic subgroup $P$ then $\alpha$ never returns to $gP$ after leaving it.
\end{lem}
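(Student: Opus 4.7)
The plan is to argue by contradiction using Theorem~\ref{thm:klocalgeodesics}(3), which converts the $(8\delta+1)$-local geodesic hypothesis into a quasi-geodesic estimate, combined with the fact that any two vertices in a common parabolic coset are at $\hat\Gamma$-distance at most $1$.

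Suppose for contradiction that $\alpha$ backtracks to a coset $gP_i$. By Definition~\ref{def:connectedcomponents} there exist points $a,b \in gP_i$ lying on $\alpha$ such that the subpath $\alpha[a,b]$ is labelled by a non-parabolic word. Decomposing this label into its maximal parabolic components (and isolated letters of $S_0$) exhibits the sequence of $\hat\Gamma$-vertices visited by $\alpha[a,b]$ as the endpoints of consecutive $\hat\Gamma$-edges. Under the standing convention that distinct parabolic subgroups have essentially trivial intersection and that $S_0$ is disjoint from each $P_i$, not every such $\hat\Gamma$-vertex can lie in $gP_i$: otherwise every $\hat\Gamma$-edge of $\alpha[a,b]$ would be a parabolic component inside $gP_i$, forcing the full label to be a word in $S_i$, contradicting the non-parabolic assumption. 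Hence $\alpha[a,b]$ passes through some $\hat\Gamma$-vertex $v$ with $v \notin gP_i$.

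Next, apply Theorem~\ref{thm:klocalgeodesics}(3) to the $(8\delta+1)$-local geodesic subpath $\alpha[a,b]$ in the $\delta$-hyperbolic space $\hat\Gamma$: this subpath is a $(\lambda,\varepsilon)$-quasi-geodesic with $\lambda = (12\delta+1)/(4\delta+1)$ and $\varepsilon = 2\delta$. Since $a$ and $b$ lie in the common coset $gP_i$, the coning edge between them gives $d_{\hat\Gamma}(a,b) \leq 1$, hence
\[
l_{\hat\Gamma}(\alpha[a,b]) \leq \lambda\cdot d_{\hat\Gamma}(a,b) + \varepsilon \leq \lambda + 2\delta \leq 8\delta + 1,
\]
the last inequality being a routine check valid for $\delta \geq 1$. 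Invoking the $(8\delta+1)$-local geodesic hypothesis, the subpath $\alpha[a,b]$ must itself be a relative geodesic, so $l_{\hat\Gamma}(\alpha[a,b]) = d_{\hat\Gamma}(a,b) \leq 1$.

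Finally, since $v$ is a $\hat\Gamma$-vertex of $\alpha[a,b]$ lying outside $gP_i$, any path in $\hat\Gamma$ from $a$ to $v$ must traverse at least one $\Gamma$-edge (coning edges connect only vertices of a common coset), yielding $d_{\hat\Gamma}(a,v) \geq 1$ and symmetrically $d_{\hat\Gamma}(v,b) \geq 1$. Additivity of relative length along $\alpha$ at the $\hat\Gamma$-vertex $v$ then gives
\[
l_{\hat\Gamma}(\alpha[a,b]) = l_{\hat\Gamma}(\alpha[a,v]) + l_{\hat\Gamma}(\alpha[v,b]) \geq 2,
\]
contradicting the bound of $1$ derived above, completing the proof. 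The main obstacle in this plan is the first step — producing a $\hat\Gamma$-vertex of $\alpha[a,b]$ strictly outside $gP_i$ — which requires the standard non-pathological assumptions on the generating sets that are built into the setup of relatively hyperbolic presentations.
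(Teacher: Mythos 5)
Your proof is correct and follows essentially the same route as the paper's: both bound the relative length of the backtracking subpath via the quasi-geodesic constants of Theorem~\ref{thm:klocalgeodesics}(3) (the paper rounds $\lambda$ to $3$ via Corollary~\ref{cor:72delta}), invoke the $(8\delta+1)$-local geodesic hypothesis to conclude that this subpath is a relative geodesic, and then contradict $d_{\hat\Gamma}(a,b)\leq 1$ by exhibiting relative length at least $2$ (you via an intermediate vertex outside the coset, the paper via the decomposition $\gamma_1\circ\alpha_2\circ\gamma_2$). The mild lower bound you need on $\delta$ for the arithmetic is implicitly present in the paper's version as well and is harmless.
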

\begin{proof} By Corollary~\ref{cor:72delta}, $\alpha$ is a relative $(3,2\delta)$-quasi-geodesic. Suppose that $\alpha$ backtracks, so that $lab(\alpha)=lab(\alpha_1)lab(\gamma_1) lab(\alpha_2) lab(\gamma_2)lab(\alpha_3)$, where $lab(\gamma_1),lab(\gamma_2)\in P$, for some parabolic subgroup $P\in\{P_1,P_2,\dots,P_m \}$, $lab(\gamma_1)\neq 1$, $\alpha_2$ is a nonempty path with $lab(\alpha_2)\notin P$, and $lab(\alpha_1)P=lab(\alpha_1) lab(\gamma_1) lab(\alpha_2)P$. It follows that $lab(\alpha_2)=lab(\gamma_0)\in P$; in particular, the relative distance between the endpoints of $\alpha_2$ does not exceed 1. Since $\alpha_2$ is a relative $(3,2\delta)$-quasi-geodesic, $\ell_{\hat{\Gamma}}(\alpha_2)\leq 3+2\delta<8\delta+1$. It follows that $\alpha_2$ is a relative geodesic, so that $\ell_{\hat{\Gamma}}(\alpha_2)=1$. We conclude that $2\leq \ell_{\hat{\Gamma}}(\gamma_1\circ\alpha_2\circ\gamma_2)\leq 3$, depending on whether or not $\gamma_2$ is the empty path. Therefore, $\gamma_1\circ\alpha_2\circ\gamma_2$ is a relative geodesic of length 2 or more, whereas $lab(\gamma_1\circ\alpha_2\circ\gamma_2)\in P$, which is a contradiction.
\end{proof}
In the following lemma we explore the case when $fuf^{-1}=v$ for some $f\in G$, and in the quadrilateral $Q$ (some) parabolic components of $\alpha$ and $\beta$ are connected. 
\begin{lem} \label{lem:veryshortg}
Let $\alpha$ and $\beta$ be cyclic relative $(8\delta+1)$-local geodesics representing $u$ and $v$, respectively. Let $u$ and $v$ be conjugate in $G$, so that $v=fuf^{-1}$ for some $f\in G$. Suppose that $\alpha=\alpha_1\circ\xi_u\circ\alpha_2$ $\beta=\beta_1\circ\xi_v\circ\beta_2$, where $p_u=lab(\xi_u)$ and $p_v=lab(\xi_v)$ are in $P_i$, both written as words in $F(S_i)$, for some $i$, while $u_j=lab(\alpha_j)$ and $v_j=lab(\beta_j)$, $j=1,2$, are not in $P_i$. 

If in the quadrilateral $Q_f$ the parabolic components $\xi_u$ and $\xi_v$ are connected then there is a conjugating element $g\in P_i$, written as a word in $F(S_i)$. Moreover, if $g$ has the shortest possible relative length then exactly one of the following holds: 
\begin{enumerate}
\item $g=1$ and $u=v$;
\item Both $u$ and $v$ are written as hyperbolic elements, so that $u_j\neq 1$ and $v_k\neq 1$ for some $j,k\in\{1,2\}$, $u\neq v$, and $1\leq |g|_{\Gamma}\leq C(7,2\delta)$;
\item Both $u$ and $v$ are written as parabolic elements, so that $u=p_u$ and $v=p_v$, and $Q$ is a parabolic quadrilateral.
\end{enumerate}
\end{lem}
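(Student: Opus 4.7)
The hypothesis that the parabolic components $\xi_u$ and $\xi_v$ are connected in $Q_f$ says precisely that, along the closed path $\sigma \circ \alpha \circ \tau^{-1} \circ \beta^{-1}$, the components $\xi_u$ and $\xi_v$ penetrate the same left coset of $P_i$. Since $\xi_u$ lies in $f u_1 P_i$ (its initial vertex is $A_1 \cdot u_1 = f u_1$) and $\xi_v$ lies in $v_1 P_i$, this coset equality is exactly $q := v_1^{-1} f u_1 \in P_i$. Setting $u' := u_1^{-1} u u_1 = p_u u_2 u_1$ and $v' := v_1^{-1} v v_1 = p_v v_2 v_1$, which are cyclic permutations of $u$ and $v$, the identity $v = f u f^{-1}$ rearranges to $v' = q u' q^{-1}$. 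Hence $q \in P_i$, written as a word in $F(S_i)$, is a conjugating element for these cyclic permutations, establishing the first assertion of the lemma.

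\textbf{Step 2: Minimization and the easy cases.} Now let $g \in P_i$ be a conjugator of some cyclic permutation of $u$ to some cyclic permutation of $v$, chosen to have the smallest possible relative length. Since every element of $P_i$ has $|\cdot|_{\hat{\Gamma}} \leq 1$, we have $|g|_{\hat\Gamma} \in \{0,1\}$. If $|g|_{\hat\Gamma} = 0$, then $g = 1$ and the two cyclic permutations coincide, yielding case (1). If the $(8\delta+1)$-local geodesic representatives of both $u$ and $v$ are parabolic words, then $\alpha$ and $\beta$ reduce to the single parabolic components $\xi_u$ and $\xi_v$, all four sides of $Q$ lie in $P_i$, and $Q$ is $P_i$-parabolic — case (3). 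To rule out the mixed scenario, note that if $u$ is parabolic as a group element then, since $\alpha$ is a relative $(8\delta+1)$-local geodesic, Lemma~\ref{lem:localgeodnobacktracking} forces $\alpha$ to consist of a single parabolic component, so $u = p_u$; and then $v = f u f^{-1}$ is parabolic as well, forcing $v = p_v$ by the same argument.

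\textbf{Step 3: Bounding $|g|_\Gamma$ in the hyperbolic case.} Suppose both $u$ and $v$ are hyperbolic words (some $u_j$ and some $v_k$ are non-trivial) and $|g|_{\hat\Gamma} = 1$. Form the quadrilateral $Q$ with vertical sides $\sigma, \tau$ each a single edge of $\hat{\Gamma}$ labelled $g$, and horizontal sides the cyclic permutations $\alpha'$ and $\beta'$ labelled $u'$ and $v'$. Applying Lemma~\ref{lem:noselfintersection} and Corollary~\ref{cor:72delta} with $g$ as a shortest conjugator, the paths $\sigma \circ \alpha'$ and $\tau^{-1} \circ \beta'^{-1}$ are $(7, 2\delta)$-quasi-geodesics sharing endpoints. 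The plan is to invoke the BCP property (Definition~\ref{def:BCP}) for this bigon: the parabolic component labelled $g$ is itself a single isolated parabolic component of the closed path $\sigma \circ \alpha' \circ \tau^{-1} \circ \beta'^{-1}$, so BCP yields $l_\Gamma(g) = |g|_\Gamma \leq C(7, 2\delta)$. Minimality of $g$ also precludes $u' = v'$, so $u \neq v$ after cyclic permutation.

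\textbf{Main obstacle.} The subtle point in Step 3 is verifying that $g$ is indeed an \emph{isolated} parabolic component of the bigon so that BCP applies as stated. If $\sigma$ were connected (in the sense of Definition~\ref{def:connectedcomponents}) to an adjacent parabolic component of $\alpha'$ or $\beta'$ sitting in the same $P_i$-coset, minimality of $g$ must be used to derive a contradiction: absorbing such an adjacent component into $\sigma$ would produce a strictly shorter conjugator in $P_i$. Carrying out this reduction cleanly — and making sure Lemma~\ref{lem:noselfintersection} and Corollary~\ref{cor:72delta} still apply after the cyclic permutations are taken so that $\xi_u$ (resp.\ $\xi_v$) is the initial segment of $\alpha'$ (resp.\ $\beta'$) — is the delicate part of the argument.
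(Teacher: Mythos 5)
Your Step 1 and the overall strategy (extract a conjugator in $P_i$ from the coset equality, split into cases, then bound the hyperbolic case via BCP applied to a quasi-geodesic bigon) match the paper's approach, but two steps are genuinely incomplete or incorrect as written.

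First, your exclusion of the mixed scenario rests on the claim that if $u$ is parabolic \emph{as a group element} then Lemma~\ref{lem:localgeodnobacktracking} forces the cyclic local geodesic $\alpha$ to be a single parabolic component. That implication is false in general and is not what Lemma~\ref{lem:localgeodnobacktracking} says: a cyclic relative $(8\delta+1)$-local geodesic can perfectly well represent a parabolic element without being a parabolic word (this possibility is exactly what Theorem~\ref{thm:parabolic} and the list $\mathcal{L}_{10}$ are designed to handle). Moreover the trichotomy in the lemma is about how $u$ and $v$ are \emph{written}, not about whether they are parabolic elements. The correct argument, which is the one the paper gives, is specific to the connectedness hypothesis: if $v=p_v$ while some $u_j\neq 1$, then the coset equality yields $q=fu_1\in P_i$ (equivalently $fu_2^{-1}\in P_i$), and the relation $p_v=q\,p_uu_2u_1\,q^{-1}$ forces $u_2u_1=p_u^{-1}q^{-1}p_vq\in P_i$; hence the cyclic permutation $\xi_u\circ\alpha_2\circ\alpha_1$ of $\alpha$ backtracks to the coset $P_i$, contradicting Lemma~\ref{lem:localgeodnobacktracking}. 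You never derive $u_2u_1\in P_i$, so this case is not actually handled.

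Second, in Step 3 the BCP application fails with your choice of cyclic permutations. You place $\xi_u$ at the start of $\alpha'$ and $\xi_v$ at the start of $\beta'$, so in the closed path $\sigma\circ\alpha'\circ\tau^{-1}\circ\beta'^{-1}$ the edge $\sigma$ (from $1$ to $q$), the component $\xi_u$ (from $q$ to $qp_u$) and the component $\xi_v$ all lie in the single coset $P_i$: the parabolic component of the bigon containing $\sigma$ is therefore labelled by something like $p_v^{-1}qp_u$, not by $g$ alone, and it is in particular not isolated in the sense of Definition~\ref{def:connectedcomponents}. You correctly identify this as the ``main obstacle'' but do not resolve it, and resolving it is the substance of case (2). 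The paper's resolution is to change the combinatorics before invoking BCP: it works with the hexagon $\sigma_g\circ\alpha_2\circ\alpha_1\circ\tau_q^{-1}\circ\beta_1^{-1}\circ\beta_2^{-1}$, where the second vertical side is labelled $q=p_vgp_u^{-1}$, i.e.\ the components $p_u$ and $p_v$ are absorbed into one vertical side so that the two horizontal paths $\alpha_2\circ\alpha_1$ and $\beta_2\circ\beta_1$ carry no $P_i$-component adjacent to $\sigma_g$. Non-connectedness of $\sigma_g$ and $\tau_q$ then follows not from minimality of $g$ but from no-backtracking of the cyclic local geodesic $\beta$ (which gives $v_2v_1P_i\neq P_i$), after which Corollary~\ref{cor:72delta} and Lemma~\ref{lem:noselfintersection} make the two sides of the hexagon a pair of $(7,2\delta)$-quasi-geodesics to which Definition~\ref{def:BCP} applies, yielding $|g|_{\Gamma}\leq C(7,2\delta)$. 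Without this rearrangement your Step 3 does not bound $|g|_{\Gamma}$.
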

\begin{proof} Suppose that $u\neq v$, so that $g\neq 1$. 

Firstly, we show that if $v=p_v$ then necessarily $u=p_u$. By way of contradiction, suppose that $u=u_1p_uu_2$, where $p_u$ is a maximal parabolic component, and at least one of $u_1$ and $u_2$ is not equal 1 and is not in $P_i$. Since $p_u$ and $p_v$ are connected, there is $g\in P_i$ such that $g=fu_2^{-1}$ and therefore, $p_v=fu_1p_uu_2f^{-1}=gu_2u_1p_ug^{-1}$. Whereas $u_2$ and $u_1$ are not written as parabolic words in $F(S_i)$, note that $u_2u_1=g^{-1}p_vgp_u^{-1}\in P_i$. In particular, the relative $(8\delta+1)$-local geodesic path $(\alpha_2\circ\alpha_1\circ\gamma_u)^{-1}$ backtracks, which contradicts Lemma~\ref{lem:localgeodnobacktracking}. Hence, $u_1=u_2=1$, and we have (3).

Now, let $g=v_2fu_2^{-1}$, so that $g\in P_i$ and $gu_2u_1p_ug^{-1}=v_2v_1p_v$, and let $Q$ be the corresponding quadrilateral. Assume that $Q$ is not a parabolic quadrilateral. By the preceding paragraph, this implies that both $u$ and $v$ are written as hyperbolic elements. We have that $v_2v_1p_vgp_u^{-1}=u_2u_1$, and let $q=p_vgp_u^{-1}$. Denote by $\sigma_g$ (or $\tau_q$) the parabolic geodesic labelled by $g$ (or $q$). By Lemma~\ref{lem:localgeodnobacktracking}, $v_2v_1P_i\neq P_i$, hence in the closed path $\sigma_g\circ\alpha_2\circ\alpha_1\circ\tau_q^{-1}\circ\beta_1^{-1}\circ\beta_2^{-1}$ the parabolic components $\sigma_g$ and $\tau_q$ are not connected. Since, according to Corollary~\ref{cor:72delta}, $\sigma_g\circ\alpha_2\circ\alpha_1$ and $\beta_2\circ\beta_1\circ\tau_q$ is a pair of relative $(7,2\delta)$-quasi-geodesics with common endpoints, (2) follows.
\end{proof}
\begin{cor} \label{cor:veryshortg}
Let conjugate elements $u$ and $v$ be represented by relative geodesics $\gamma_u$ and $\gamma_v$, respectively, and let the labels $lab(\gamma_u)$ and $lab(\gamma_v)$ be written as hyperbolic elements.  Suppose that parabolic components of $\gamma_u$ and $\gamma_v$ are connected, and let $g$ be a shortest possible conjugating parabolic element as in the statement of Lemma~\ref{lem:veryshortg}. If $u\neq v$ then $1\leq |g|_{\Gamma}\leq C(3)$.
\end{cor}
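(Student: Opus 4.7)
The plan is to specialize case~(2) of Lemma~\ref{lem:veryshortg} to the setting of genuine relative geodesics, so that the $(7,2\delta)$ quasi-geodesic constants that appeared in that proof can be replaced by the sharper $(3,0)$ constants afforded by Lemma~\ref{lem:noselfintersection}(2).

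First I would apply Lemma~\ref{lem:veryshortg} with $\alpha=\gamma_u$ and $\beta=\gamma_v$, which is permissible since any relative geodesic is in particular a relative $(8\delta+1)$-local geodesic. The hypothesis $u\neq v$ rules out conclusion~(1) of that lemma (which would force $g=1$), and the hypothesis that $\mathrm{lab}(\gamma_u)$ and $\mathrm{lab}(\gamma_v)$ are written as hyperbolic elements rules out conclusion~(3) (the purely parabolic case). Thus conclusion~(2) must hold, producing a parabolic conjugator $g\in P_i$ with $|g|_\Gamma\geq 1$.

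For the upper bound I would repeat the concluding BCP step from the proof of Lemma~\ref{lem:veryshortg}(2), but now working with the closed path $\sigma_g\circ\gamma_u'\circ\tau_q^{-1}\circ(\gamma_v')^{-1}$, where $\gamma_u'$ and $\gamma_v'$ are the appropriate cyclic rotations of the given geodesics and $\sigma_g,\tau_q$ are single-edge parabolic components in $P_i$. Because $\gamma_u'$ and $\gamma_v'$ are relative geodesics (not merely $(8\delta+1)$-local geodesics), Lemma~\ref{lem:noselfintersection}(2) upgrades the two sides $\sigma_g\circ\gamma_u'$ and $\tau_q^{-1}\circ(\gamma_v')^{-1}$ from $(7,2\delta)$-quasi-geodesics to $(3,0)$-quasi-geodesics with common endpoints. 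The no-backtracking property of geodesics (a fortiori case of Lemma~\ref{lem:localgeodnobacktracking}) ensures that $\sigma_g$ and $\tau_q$ remain isolated parabolic components of this closed path, so BCP applied with parameters $\lambda=3$, $\varepsilon=0$ yields $|g|_\Gamma\leq C(3)$.

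The main obstacle I foresee is purely bookkeeping: one must check that the $(3,0)$ conclusion of Lemma~\ref{lem:noselfintersection}(2) is actually available for the specific $g$ produced here, since that lemma was stated for a shortest overall conjugator whereas the corollary selects only the shortest parabolic conjugator. I expect this to be resolved by observing that in the present setting the two notions coincide — any strictly shorter overall conjugator would, via the same left/right adjustment $\tilde g\mapsto v_2\tilde g u_2^{-1}$ used in Lemma~\ref{lem:veryshortg}, produce a strictly shorter parabolic conjugator, contradicting the minimality of $g$.
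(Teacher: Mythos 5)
Your proposal is correct and follows essentially the same route as the paper: the paper's proof likewise consists of invoking the argument of Lemma~\ref{lem:veryshortg} and observing that, since $\gamma_u$ and $\gamma_v$ are relative geodesics rather than merely $(8\delta+1)$-local geodesics, the two sides of the resulting quadrilateral are $(3,0)$-quasi-geodesics (via Lemma~\ref{lem:noselfintersection}(2)), so BCP yields $C(3)$ in place of $C(7,2\delta)$. Your extra remark reconciling the two minimality notions is a sensible piece of bookkeeping that the paper leaves implicit.
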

\begin{proof} The claim follows immediately from the proof of Lemma~\ref{lem:veryshortg}, one only needs to note that, by Corollary~\ref{cor:72delta}, the geodesic quadrilateral $Q_g$ is formed by two relative $(3,0)$-quasi-geodesics with common endpoints.
\end{proof}

\begin{lem}\label{lem:parabcompsofg} Let $g$ be a conjugating element of shortest relative length for cyclic permutations of $u$ and $v$. Suppose that $Q$ (or $Q_g$) is not a parabolic quadrilateral. 
\begin{enumerate}
\item\label{lem:parabcompsofgPart1} If $u\notin F(S_i)$, for all $i$, so that $\alpha$ does not consist of a single parabolic component then either $\alpha$ and $\sigma$, or $\alpha$ and $\tau$ have no connected parabolic components.
\item \label{lem:parabcompsofgPart2}  If $u\notin F(S_i)$ and $v\notin F(S_j)$, for all $i,j$, and $\sigma$ and $\tau$ do not have connected parabolic components, then every parabolic component of $g$ appears isolated in either $\sigma$ or $\tau$.
\end{enumerate}
\end{lem}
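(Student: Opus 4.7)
The plan is to prove both parts by contradiction, exploiting the minimality of $|g|_{\hat\Gamma}$ among conjugators of cyclic permutations of $u$ and $v$. I will in fact establish the stronger form of (1) that $\alpha$ has no parabolic component connected in $Q$ to any component of $\sigma$, and symmetrically none connected to any component of $\tau$; this clearly implies the stated disjunction and makes (2) almost immediate.

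Suppose $p_\sigma\subset\sigma$ and $p_\alpha\subset\alpha$ are distinct parabolic components of $Q$ connected via a common coset $xP_i$, and write $\sigma=\sigma_0\cdot p_\sigma\cdot\sigma_1$. First I rule out $|\sigma_1|_{\hat\Gamma}=0$: if $\sigma_1$ were empty then $A_1=(p_\sigma)_+\in xP_i$, and the first edge of $\alpha$ either stays in $xP_i$ (so the initial parabolic piece of $\alpha$ in $xP_i$ merges with $p_\sigma$ into a single parabolic component of $Q$, contradicting distinctness) or leaves $xP_i$ (in which case Lemma~\ref{lem:localgeodnobacktracking} prevents $\alpha$ from re-entering $xP_i$, contradicting the existence of $p_\alpha$). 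So $|\sigma_1|_{\hat\Gamma}\geq 1$. Next I cyclically permute $u$ so that the rotated path starts at $D_2:=(p_\alpha)_-$; the new conjugator is $g'=g\cdot t$ where $t=lab(\alpha_0)$ and $\alpha_0$ is the prefix of $\alpha$ from $A_1$ to $D_2$. Both $(p_\sigma)_-$ and $D_2$ lie in $xP_i$, so there is a single parabolic edge between them in $\hat\Gamma$; concatenating $\sigma_0$ with this edge yields a path from $A_0$ to $D_2$ of length at most $|\sigma_0|_{\hat\Gamma}+1$. Hence $|g'|_{\hat\Gamma}\leq |\sigma_0|_{\hat\Gamma}+1<|\sigma_0|_{\hat\Gamma}+1+|\sigma_1|_{\hat\Gamma}=|g|_{\hat\Gamma}$, contradicting minimality. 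The $\tau$ case follows by applying the same argument to the reversed closed path $Q^{-1}$.

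For (2), combine the stronger form of (1) with its symmetric version applied to $v$ and $\beta$ (which uses $v\notin F(S_j)$ for all $j$): no parabolic component of $\sigma$ or of $\tau$ is connected in $Q$ to a component of $\alpha$ or of $\beta$. The hypothesis of (2) excludes $\sigma\leftrightarrow\tau$ connections, and since $\sigma$ and $\tau$ are relative geodesics they cannot backtrack and so have no self-connections. Hence every parabolic component of $\sigma$ (and of $\tau$) is isolated in $Q$, giving the conclusion with either $\sigma$ or $\tau$ as the isolated witness.

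The main obstacle is the case analysis that rules out $|\sigma_1|_{\hat\Gamma}=0$: it requires carefully distinguishing \emph{distinct} components of $Q$ from components that merge at a shared vertex of the quadrilateral, and separately invoking the no-backtracking property for $\sigma$ (as a relative geodesic) and for $\alpha$ (via Lemma~\ref{lem:localgeodnobacktracking}); the subsequent length comparison via a parabolic shortcut is then routine.
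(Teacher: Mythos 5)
There is a genuine gap, and it sits exactly at the delicate point of this lemma. Your length-reduction argument in the case $|\sigma_1|_{\hat\Gamma}\geq 1$ is fine and is essentially the paper's own argument (the shortcut $\sigma_0$ followed by a single parabolic edge into the common coset contradicts minimality of $\sigma$ as a path from $\beta$ to $\alpha$). The problem is your attempt to eliminate the ``corner'' case $|\sigma_1|_{\hat\Gamma}=0$. In the branch where the first edge of $\alpha$ stays in $xP_i$, you conclude that the last component of $\sigma$ and the first component of $\alpha$ merge into one component of $Q$, ``contradicting distinctness'' --- but there is no hypothesis to contradict: the two components are subpaths of the two different sides $\sigma$ and $\alpha$, they are automatically distinct as such, and their penetrating a common coset while being adjacent at the vertex $A_1$ is precisely the configuration the lemma must accommodate. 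Indeed this configuration can occur for a shortest conjugator: if $u=pu_1$ with $p\in P_i$ and $g=g_1q$ with $q\in P_i$, the obvious replacement $g_1(qp)$ conjugating the cyclic permutation $u_1p$ to $v$ has relative length $\leq |g|_{\hat\Gamma}$ but not strictly less unless $qp=1$. This is why the conclusion of (1) is a disjunction rather than the stronger ``no connections at all'' that you claim, and it matters downstream: a component of $g$ that merges with a component of $u$ at the corner is not isolated in $\sigma\circ\alpha$, so BCP does not bound its $\Gamma$-length there, and one must fall back on its synchronous copy in $\tau$.

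The missing content is the second half of the paper's argument: assuming a corner connection between $\sigma$ and $\alpha$ at $A_1$, one shows there cannot also be one between $\tau$ and $\alpha$ at $A_2$. This uses that the relevant components of $\sigma$ and $\tau$ are synchronous (hence carry the same label and lie in the same $P_i$), and then that $\alpha$ is a \emph{cyclic} $(8\delta+1)$-local geodesic: if $\alpha$ had distinct corner components $p_a$ at $A_1$ and $p_b$ at $A_2$ in the same $P_i$, then $p_a^{-1}up_a$ would be relatively shorter, contradicting cyclic local geodesity; if they coincided, $\alpha$ would be a single parabolic component, contradicting $u\notin F(S_i)$. Your part (2) inherits the gap, since it is derived entirely from the unproved stronger form of (1); note also that your stated conclusion for (2) (every component of $\sigma$ \emph{and} of $\tau$ is isolated) is stronger than the lemma's (each component of $g$ is isolated in at least one of $\sigma$, $\tau$) and fails in the corner case.
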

\begin{proof} (1) Suppose that $\sigma$ and $\alpha$ have connected parabolic components, $p_s\in P_i$ and $p_a\in P_i$, correspondingly. Abusing notation, we denote by $p_s$ and $p_a$ both paths in $\Gamma$ and their labels. Let $\sigma=\sigma_1\circ p_s\circ\sigma_2$ and $\alpha=\alpha_1\circ p_a\circ\alpha_2$. Since by Lemma~\ref{lem:localgeodnobacktracking}, $(8\delta+1)$-local geodesics do not bactrack, $\sigma_2$ and $\alpha_1$ are either both empty paths, or both non-empty paths. Indeed, if $\sigma_2$ was the empty path and $\alpha_1$ was non-empty then $\sigma_1P_i=\sigma_1p_s\alpha_1P_i$ would imply that $\alpha$ bactracks, and this is a contradiction. The other case is similar. On the other hand, if both $\sigma_2$ and $\alpha_1$ were non-empty then $\sigma_1\circ[(p_s)_-,(p_a)_+]$ would be a shorter conjugating element for cyclic conjugates of $u$ and $v$, which is a contradiction.  So, suppose that both $\sigma_2$ and $\alpha_1$ are empty paths. 

To show that $\alpha$ and $\tau$ have no connected parabolic components, suppose by way of contradiction that $\alpha=\alpha'_1\circ p_b\circ\alpha'_2$ and $\tau=\tau_1\circ p_t\circ\tau_2$ and that the parabolic components $p_b\in P_i$ and $p_t\in P_i$ are connected. The arguments above apply to show that $\tau_2^{-1}$ and $\alpha'_2$ are both empty paths. Note that we only need to consider the case when $p_t$ and $p_s$ are synchronous components, hence they have the same label. It follows that $p_a$ and $p_b$ belong to the same parabolic subgroup. If the parabolic components $p_b$ and $p_a$ are distinct then $\alpha$ is not a relative cyclic $(8\delta+1)$-local geodesic, as $p_a^{-1}up_a$ would have a shorter relative length, which is a contradiction. However, if $p_a$ and $p_b$ are one and the same parabolic component of $\alpha$ then $\alpha=p_a=p_b$, which contradicts our assumption on $u$.

(2) The same arguments apply to show that if $v\notin P_j$, for all $j$, and $\sigma$ and $\beta$ have connected parabolic components $p_t$ and $p_v$, correspondingly, then necessarily the component of $\tau$ synchronous with $p_t$ is isolated. The claim follows.
\end{proof}

\begin{lem}\label{lem:isolated} Let $g$ be a conjugating element of shortest relative length for cyclic permutations of $u$ and $v$. Suppose that $u\notin F(S_i)$ for all $i=1,2,\dots,m$. Then every parabolic component of $g$ appears isolated in either $\sigma$ or $\tau$, in each one of the following cases.
\begin{enumerate}
\item \label{lem:isolatedhyperbolicelts} The  elements $u$ and $v$ are hyperbolic,
\item \label{lem:isolatedparabolicelt}  The  element $v=p$ consists of a single parabolic component, $u$ is written as a hyperbolic element, and $g$ has a shortest relative length among all the elements of $G$ that conjugate $u$ into a parabolic subgroup of $G$. Moreover, in this case $p$ is an isolated component of the path $\tau^{-1}\circ\beta^{-1}$.
\end{enumerate}
\end{lem}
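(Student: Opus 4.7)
The strategy is to deduce the lemma from Lemma~\ref{lem:parabcompsofg}(2), whose conclusion literally matches the first assertion of the present lemma. The extra hypothesis of Lemma~\ref{lem:parabcompsofg}(2) that needs to be checked is that $\sigma$ and $\tau$ share no connected parabolic components in the closed path $\sigma\circ\alpha\circ\tau^{-1}\circ\beta^{-1}$. In both parts the verification rests on a \emph{shortcut principle}: if a parabolic component of $\sigma$ and a parabolic component of $\tau^{-1}$ lie in a common coset of some $P_i$, we can replace the detour between them by a single parabolic edge (of relative length $1$), yielding a strictly shorter conjugator and contradicting the minimality of $g$.

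For part (1), hyperbolicity of $u$ and $v$ means that neither element is a word in any $F(S_i)$, giving the first two hypotheses of Lemma~\ref{lem:parabcompsofg}(2). Suppose a component of $\sigma$ is connected, in the closed path, to a component of $\tau^{-1}$. Because $\sigma$ and $\tau$ are both relative geodesics labelled by $g$, they penetrate the same sequence of cosets, so up to relabeling these components are synchronous; writing $g=g_1\,p\,g_2$ with $p\in P_i$ the common label, the $\sigma$-component sits in $g_1 P_i$ while the $\tau$-component sits in $v g_1 P_i$. Connectedness forces $g_1 P_i = v g_1 P_i$, so $v\in g_1 P_i g_1^{-1}$, making $v$ parabolic---contradicting hyperbolicity of $v$. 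A non-synchronous connected pair is excluded directly by the shortcut principle, which constructs out of such a pair a strictly shorter element conjugating a cyclic permutation of $u$ to a cyclic permutation of $v$. Lemma~\ref{lem:parabcompsofg}(2) now applies and yields the first conclusion.

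For part (2), $v=p\in F(S_j)$, so Lemma~\ref{lem:parabcompsofg}(2) does not apply verbatim; but the shortcut argument of part (1) continues to function, because a connected pair in $\sigma$ and $\tau^{-1}$ produces an element conjugating $u$ into a parabolic subgroup of strictly smaller relative length than $g$, contradicting the minimality hypothesis on $g$. Combined with the retained hypothesis $u\notin F(S_i)$, the portion of the proof of Lemma~\ref{lem:parabcompsofg}(2) that does not invoke $v\notin F(S_j)$ suffices to establish the first assertion. For the additional claim, note that $\beta^{-1}$ is the parabolic component $p$ in coset $P_j$; if a parabolic component of $\tau^{-1}$ also lay in $P_j$, splicing a single parabolic edge inside $P_j$ in place of the intervening portion of $\tau^{-1}\circ\beta^{-1}$ would yield a shorter element still conjugating $u$ into $P_j$, contradicting minimality. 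The main delicacy in carrying this out is the bookkeeping required to verify that each shortcut produces a bona fide conjugator of a cyclic permutation of $u$ into the appropriate target (a cyclic permutation of $v$ in part (1), a parabolic subgroup in part (2)) with strictly smaller relative length; once this is done, the contradictions with the minimality of $g$ close the argument.
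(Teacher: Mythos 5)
Your proof is correct and matches the paper's: both reduce to Lemma~\ref{lem:parabcompsofg}(2) after using the minimality of $g$ to rule out connected parabolic components of $\sigma$ and $\tau$ (and, in part (2), components connected to $p$), the only real difference being that you derive the synchronicity of connected components from your shortcut principle where the paper simply cites \cite[Lemma 3.39]{OsinRelHypGps}. One small point: in part (2) a \emph{synchronous} connected pair of $\sigma$ and $\tau$ yields a shortcut of the same relative length as $g$ rather than a strictly shorter one, so the strictly shorter element conjugating $u$ into $P_i$ there is the truncation $g_2$ of $g=g_1qg_2$ (since $q^{-1}g_1^{-1}pg_1q\in P_i$), not the spliced path --- but this is the same level of detail the paper itself omits.
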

\begin{proof}  (1) Note that connected parabolic components of $\sigma$ and $\tau$ can only be synchronous, by~\cite[Lemma 3.39]{OsinRelHypGps}. However, if $\sigma$ and $\tau$ have synchronous connected parabolic components then $u$ and $v$ are parabolic, which is a contradiction. The claim now follows from Lemma~\ref{lem:parabcompsofg}(2).

(2)  Parabolic components of $\sigma$ and $\tau$ are not connected, for if they were then $g$ would not be a shortest conjugator. Moreover, parabolic components of $\sigma$ and $\tau$ are not connected to $p$ for if they were connected then, by an argument in the proof of Lemma~\ref{lem:parabcompsofg}, necessarily $u=g_1^{-1}p_1^{-1}pp_1g_1$ for some $p_1\in P_i$. However, this implies that $u=g_1^{-1}(p_1^{-1}pp_1)g_1$ is conjugate into $P_i$ by the shorter element $g_1$, which is a contradiction. Now, Lemma~\ref{lem:parabcompsofg} applies to prove the claim.
\end{proof}

Recall that for $i=1,2,\dots,m$, we denote by $B_i$ the following subset of the parabolic subgroup: $B_i=\{p\in P_i\mid |p|_{\Gamma}\leq C(3) \}$.
\begin{lem}\label{lem:conjugateparabolic} Let $u\in F(S_i)$ and $v\in F(S_j)$ be conjugate in $G$. If $i=j$ then we assume that $u$ and $v$ are not conjugate in $P_i$. There are $p_u\in[u]_{P_i}\cap B_i$ and $p_v\in[v]_{P_j}\cap B_j$.
\end{lem}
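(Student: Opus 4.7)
The plan is to pick a conjugator $g$ of smallest possible relative length, minimized not just over conjugators for $u$ and $v$ themselves but over all triples $(g,u',v')$ with $u'\in[u]_{P_i}$, $v'\in[v]_{P_j}$ and $gu'g^{-1}=v'$; then apply the BCP property to isolated parabolic components of the resulting quadrilateral.

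First, I would verify that with this choice no relative geodesic $\sigma$ from $1$ to $g$ can end with a parabolic component in $gP_i$: if it did, labeled by $p\in P_i$, then writing $g=s_1p$ produces the strictly shorter valid triple $(s_1,\,pu'p^{-1},\,v')$, a contradiction. A symmetric substitution on the first edge of $\sigma$ rules out a $P_j$-component at the start, and the same argument applied to any relative geodesic $\tau$ from $v'$ to $v'g=gu'$ excludes a $gP_i$-component at its end and a $P_j$-component at its start. Because relative geodesics do not backtrack (Lemma~\ref{lem:localgeodnobacktracking}), and because $\sigma$ ends at $g\in gP_i$ and starts at $1\in P_j$ (and similarly for $\tau$), these four shortness conditions actually force $\sigma$ and $\tau$ to contain no parabolic component lying in $gP_i$ or in $P_j$ whatsoever.

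Next, consider the quadrilateral $Q_g$ with horizontal edges $\gamma_{u'}$ (a single parabolic component in $gP_i$) and $\gamma_{v'}$ (a single parabolic component in $P_j$). By Lemma~\ref{lem:noselfintersection}(2) applied to $g$ (which is a shortest conjugator for cyclic permutations of $u'$ and $v'$ in particular), both $\sigma\gamma_{u'}$ and $\gamma_{v'}\tau$ are relative $(3,0)$-quasi-geodesics sharing the endpoints $1$ and $gu'$; the preceding paragraph ensures that they have no backtracking and distinct images (otherwise the first edge of $\sigma$ would lie in $P_j$, which is excluded). The cosets $gP_i$ and $P_j$ themselves are distinct: the equality $gP_i=P_j$ as sets would force $g\in P_i$ and $P_i=P_j$, hence $i=j$ with $u$ and $v$ conjugate inside $P_i$ --- ruled out by the hypothesis (and impossible when $i\neq j$). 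Consequently, in the closed path $\sigma\gamma_{u'}\tau^{-1}\gamma_{v'}^{-1}$ the component $\gamma_{u'}$ is the unique parabolic subpath lying in $gP_i$, and $\gamma_{v'}^{-1}$ is the unique one in $P_j$, so both are isolated.

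The BCP property with $\lambda=3$ then yields $|u'|_\Gamma\leq C(3)$ and $|v'|_\Gamma\leq C(3)$, so setting $p_u:=u'\in[u]_{P_i}\cap B_i$ and $p_v:=v'\in[v]_{P_j}\cap B_j$ proves the lemma. The main technical point to nail down is the first step --- checking that the four elementary shortness reductions eliminate \emph{every} parabolic component of $\sigma$ and $\tau$ in $gP_i$ and $P_j$, and not merely the terminal or initial ones. This is where non-backtracking is used crucially: any component of $\sigma$ in the coset $gP_i$ would, by single-entry of geodesics into parabolic cosets combined with the fact that $\sigma$ terminates at $g\in gP_i$, have to be the terminal component of $\sigma$, which the shortness of $g$ forbids.
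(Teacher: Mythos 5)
Your argument is correct, and although it shares the paper's overall skeleton --- normalize the conjugator modulo $P_i$ on one side and $P_j$ on the other, show that the two horizontal parabolic edges of the resulting quadrilateral are isolated components, and invoke the BCP property --- the route you take to isolation is genuinely different. The paper's proof reaches isolation through a chain of earlier results: it treats $|g|_{\hat{\Gamma}}=1$ via Lemma~\ref{lem:veryshortg}, strips components of $g$ connected to $u$ and to $v$ by the argument of Lemma~\ref{lem:parabcompsofg}, and then, when the two vertical sides still have connected (necessarily synchronous, by \cite[Lemma 3.39]{OsinRelHypGps}) components, splits $Q_g$ into two sub-quadrilaterals $Q_1,Q_2$ in which every component is isolated; this yields the slightly sharper bound $C(2)$. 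You instead observe that only components of $\sigma$ and $\tau$ lying in the two specific cosets $gP_i$ and $P_j$ could spoil the isolation of $\gamma_{u'}$ and $\gamma_{v'}$, and that non-backtracking of relative geodesics (Lemma~\ref{lem:localgeodnobacktracking}) forces any such component to be terminal (resp.\ initial), which your minimization over triples $(g,u',v')$ kills by an explicit length reduction; note that this minimization is also exactly what justifies applying Lemma~\ref{lem:noselfintersection}(2), since any path from a point of $\gamma_{v'}$ to a point of $\gamma_{u'}$ produces a competing triple. This makes the argument self-contained --- in particular you never need to analyse connected components between $\sigma$ and $\tau$ --- at the cost of the marginally larger constant $C(3)$ from the $(3,0)$-quasi-geodesics, which is immaterial since $B_i$ is defined using $C(3)$. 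The one case you should add a sentence for is the degenerate one $g=1$, possible when $i\neq j$ and $[u]_{P_i}\cap[v]_{P_j}\neq\emptyset$: the quadrilateral then collapses to a bigon of two geodesic edges penetrating the distinct cosets $P_i$ and $P_j$, and BCP applied to that bigon gives the bound $C(1)\leq C(3)$, so the conclusion persists.
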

\begin{proof} The relative length of both $u$ and $v$ equals 1, so that in this case, the semi-parabolic quadrilateral $Q$ is the concatenation of two $(2,0)$-quasi-geodesics. In general, $v=fuf^{-1}$, where $f=f_vgf_u$ with $f_u\in P_i$ and $f_v\in P_j$. We denote by $p_u=f_uuf_u^{-1}$ and by $p_v=f_v^{-1}vf_v$. We assume that $g$ is a shortest conjugating element  for $p_u$ and $p_v$. By assumption, $g\notin P_i$. If $v\in P_j$ with $j\neq i$ then $g\notin P_j$ because $p_u=g^{-1}p_vg\notin P_j$. Hence, if the relative length of $g$ equals 1 then, by Lemma~\ref{lem:veryshortg}, all the parabolic components of $Q$ are isolated, and we conclude that $|p_u|_{\Gamma}\leq C(2)$ and $|p_v|_{\Gamma}\leq C(2)$; note that $C(2)\leq C(3)$.

So, we can assume that $|g|_{\hat{\Gamma}}>1$. If a parabolic component of $g$ is connected to $u$ then, by an argument in the proof of Lemma~\ref{lem:parabcompsofg}, necessarily $v=g_1p_1up_1^{-1}g_1^{-1}$ for some $p_1\in P_i$. Set $p_u=p_1up_1^{-1}$ to have $v=g_1p_ug_1^{-1}$, where no parabolic component of $g_1$ connected to $p_u$. If no parabolic component of $g$ is connected to $u$ then we set $p_u=u$ and $g_1=g$.  Similarly, if a parabolic component of $g_1$ is connected to $v\in P_j$ then there is $p_2\in P_j$ and $p_v=p_2^{-1}vp_2$ such that $p_u=g_2^{-1}p_vg_2$, where $g_2=p_2^{-1}g_1$ and no parabolic component of $g_2$ is connected to $p_v$. We set $p_v=v$ and $g_2=g_1$ if parabolic components of $g_1$ are not connected to $v$. 

Let $\sigma'$ and $\tau'$ represent $g_2$, and let $\hat\gamma_u$ and $\hat\gamma_v$ represent $p_u$ and $p_v$, correspondingly.
Suppose that parabolic components of $\sigma'$ and $\tau'$ are connected. By ~\cite[Lemma 3.39]{OsinRelHypGps}, the connected components have to be synchronous. Let $p,q\in P_1\cup P_2\cup\dots\cup P_m$ be such that $p_u=h^{-1}ph$, $p_v=fqf^{-1}$, and let the paths $\sigma_h$ and $\tau_h$, as well as $\sigma_f$ and $\tau_f$, have no connected components. Then in the quadrilaterals $Q_1=\sigma_h\circ\hat\gamma_u\circ\tau_h^{-1}\circ\gamma_p$ and $Q_2=\sigma_f\circ\gamma_q\circ\tau_f^{-1}\circ\hat\gamma_v^{-1}$ the parabolic components are all isolated, by an argument in the proof of Lemma~\ref{lem:isolated}(\ref{lem:isolatedparabolicelt}). The claim for $|p_u|_{\Gamma}$ and $|p_v|_{\Gamma}$ follows since each of $Q_1$ and $Q_2$ is the concatenation of two $(2,0)$-quasi-geodesics. 
\end{proof}

\begin{cor} \label{cor:conjugatorforparabolic}
Let $u$ and $v$ be as in the statement of Lemma~\ref{lem:conjugateparabolic}, and let $g$ be a shortest conjugator for $u$ and $v$, so that $v=gug^{-1}$. Assume that the corresponding semi-parabolic  quadrilateral $Q$ is minimal, that is, the parabolic components of $\sigma$ and $\tau$ are not connected. Then the $\Gamma$-length of every parabolic component of $g$ is bounded by $C(3)$.
\end{cor}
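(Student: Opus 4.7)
The plan is to view the closed quadrilateral $Q = \sigma \circ \gamma_u \circ \tau^{-1} \circ \gamma_v^{-1}$ as two relative quasi-geodesics joining $A_0$ and $A_2$, and to apply the BCP property. Since $u \in F(S_i)$ and $v \in F(S_j)$ are single parabolic words, $\gamma_u$ and $\gamma_v$ are relative geodesics of relative length one. By Lemma~\ref{lem:noselfintersection}(2) applied with $\lambda = 1$, $\varepsilon = 0$, the concatenations $\sigma \circ \gamma_u$ and $\gamma_v \circ \tau$ are $(3,0)$-quasi-geodesics. Moreover, $\sigma$ and $\tau$, being relative geodesics, do not backtrack (Lemma~\ref{lem:localgeodnobacktracking}); since $\gamma_u$ and $\gamma_v$ each consist of a single parabolic component, neither of these quasi-geodesics backtracks. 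Hence the BCP property supplies a constant $C(3)$ bounding the $\Gamma$-length of every isolated parabolic component of the closed path $Q$.

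Next, I would fix an arbitrary parabolic component of $g$ and let $p_\sigma$, $p_\tau$ denote the two synchronous copies appearing in $\sigma$ and in $\tau$. The goal is to show that each of these appears as an isolated component of $Q$. Distinct components of $\sigma$ (resp.~$\tau$) are never connected because neither path backtracks. The minimality hypothesis on $Q$ rules out connections between parabolic components of $\sigma$ and $\tau$. The only remaining possibility is a connection between $p_\sigma$ (or $p_\tau$) and one of the horizontal edges $\gamma_u$ or $\gamma_v$. Following the absorption technique used in the proofs of Lemma~\ref{lem:parabcompsofg} and Lemma~\ref{lem:conjugateparabolic}, I would argue that any such connection would permit us to absorb the offending parabolic component into a $P_i$- or $P_j$-conjugate of $u$ or $v$, producing a conjugator of strictly smaller relative length. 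In the case $i = j$, the hypothesis that $u$ and $v$ are not conjugate in $P_i$ ensures this absorption indeed yields a shorter conjugator between cyclic permutations of $u$ and $v$, contradicting minimality of $g$. Once isolation is established, the BCP inequality yields $|p|_\Gamma \leq C(3)$, as desired.

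The main technical obstacle is the absorption step in the preceding paragraph: namely, verifying rigorously that every potential connection between a parabolic component of $g$ and a horizontal edge of $Q$ yields a strictly shorter conjugator in the precise sense of Section~\ref{sec:notation}. This requires the same kind of careful case analysis as in the proof of Lemma~\ref{lem:conjugateparabolic}, where one must keep track of how conjugation by parabolic elements realigns $u$ and $v$ within their respective parabolic subgroups, as well as use of the non-conjugacy hypothesis when $i = j$ to preclude the offending reduction from terminating in $P_i$ itself.
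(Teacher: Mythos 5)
Your overall strategy is the one the paper actually uses: its proof of this corollary is a one-line deferral to the proof of Lemma~\ref{lem:conjugateparabolic}, and that proof does what you outline --- view the closed path $Q$ as two relative quasi-geodesics without backtracking joining $A_0$ and $A_2$ (the paper gets $(2,0)$ rather than your $(3,0)$ by exploiting that $\gamma_u$ and $\gamma_v$ have relative length one, but either constant is at most $C(3)$), show that every parabolic component of $\sigma$ and $\tau$ is isolated, and invoke the BCP property.

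The one place where your plan is not secure is precisely the step you flag as the main obstacle: ruling out a connection between a parabolic component of $g$ and a horizontal edge $\gamma_u$ or $\gamma_v$. You propose to derive a contradiction with minimality of $g$, but this does not quite follow from the paper's definition of a shortest conjugator. If the terminal component $p_1\in P_i$ of $g=g_1p_1$ is connected to $\gamma_u$, absorption gives $v=g_1\bigl(p_1up_1^{-1}\bigr)g_1^{-1}$, and $g_1$ is indeed relatively shorter than $g$; however it conjugates the $P_i$-conjugate $p_1up_1^{-1}$ of $u$, which is in general not a cyclic permutation of $u$, so no contradiction with ``$g$ is a shortest conjugator of cyclic permutations'' arises, nor does one obtain a strictly shorter relative path from a point of $\gamma_v$ to a point of $\gamma_u$. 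The paper does not derive a contradiction here either: in the proof of Lemma~\ref{lem:conjugateparabolic} it simply performs the absorption, replacing $u$ by $p_u=p_1up_1^{-1}$ and $g$ by $g_1$, and the corollary is to be read as applying to the configuration obtained after this reduction (equivalently, the minimality hypothesis should be understood to include that no component of $g$ is connected to $\gamma_u$ or $\gamma_v$). With that reading, the remainder of your argument --- non-backtracking of the relative geodesics $\sigma$ and $\tau$, minimality for $\sigma$--$\tau$ connections, and BCP --- closes the proof exactly as in the paper.
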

\begin{proof} The claim follows from the proof to Lemma~\ref{lem:conjugateparabolic}.
\end{proof}

\subsection{Long elements}
The following theorem provides an estimate on the length of a shortest conjugating element $g$ for cyclic conjugates of $u$ and $v$ in $G$, if $u$ and $v$ are long.
\begin{thm}\label{thm:conjugatorforlongelts} Let $G$ be a group hyperbolic relative to a set of subgroups $\{P_1,\dots,P_m\}$, and let $u$ and $v$ be two elements conjugate in $G$. Let $g\in G$ be a shortest conjugator for cyclic conjugates $u'$ and $v'$ of $u$ and $v$. Let $\alpha$ and $\beta$ be relative cyclic $(8\delta+1)$-local geodesics representing $u'$ and $v'$, correspondingly. If $\max\{\ell_{\alpha},\ell_{\beta}\}>86\delta+3$ then $u$ and $v$ are hyperbolic elements of $G$, and $g\in B(7\delta+1,C(7,2\delta))$. In particular, the length of $g$ can be bounded above as follows:
$$ |g|_{\Gamma}\leq (7\delta+1)C(7,2\delta).$$
\end{thm}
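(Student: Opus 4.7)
The plan is to combine Lemma~\ref{lem:hyperbolicdichotomy}, applied to the coned-off Cayley graph $\hat\Gamma$, with the isolation result of Lemma~\ref{lem:isolated} and the Bounded Coset Penetration property of Definition~\ref{def:BCP}.

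First, $\hat\Gamma$ is $\delta$-hyperbolic and its paths are labelled by elements of $G$, so Lemma~\ref{lem:hyperbolicdichotomy} may be invoked with the ambient metric space taken to be $\hat\Gamma$ rather than $\Gamma$. The hypothesis $\max\{\ell_\alpha,\ell_\beta\}>86\delta+3$ excludes alternative~(1) of that lemma, so alternative~(2) holds and $\ell_\sigma\le 7\delta$. Its final clause then produces a word $g$ with $|g|_{\hat\Gamma}\le 7\delta+1$, which is the relative-length half of the conclusion.

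Next, I argue that $u$ and $v$ must be hyperbolic. If $u$ were parabolic, it would be conjugate to some $p\in P_i$, and after cyclically reducing the input word for $u$ one could select a cyclic conjugate $u''\in P_i$ whose cyclic $(8\delta+1)$-local geodesic representative has relative length~$1$; the same holds for $v$. Lemmas~\ref{lem:veryshortg}, \ref{lem:conjugateparabolic} and Corollary~\ref{cor:conjugatorforparabolic} would then supply a conjugator of uniformly bounded $\hat\Gamma$-length for the reduced pair $(u'',v'')$. Since per Section~\ref{sec:notation} the shortest conjugator is taken globally over all cyclic permutations of the input words, this alternative choice would force $\max\{\ell_\alpha,\ell_\beta\}=1$, contradicting the hypothesis. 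Hence $u$ and $v$ are hyperbolic, and in particular $u\notin F(S_i)$ for every~$i$.

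With $u,v$ hyperbolic, Lemma~\ref{lem:isolated}(\ref{lem:isolatedhyperbolicelts}) asserts that every parabolic component of $g$ appears as an isolated component of either $\sigma$ or $\tau$ within the closed quadrilateral $Q$. By Corollary~\ref{cor:72delta}, $\sigma\circ\alpha$ and $\beta\circ\tau$ are relative $(7,2\delta)$-quasi-geodesics sharing endpoints $A_0$ and $A_2$, whose composite is $Q$. The BCP property (Definition~\ref{def:BCP}) with $(\lambda,\epsilon)=(7,2\delta)$ then bounds the $\Gamma$-length of every such isolated component by $C(7,2\delta)$. Combining with the first step gives $g\in B(7\delta+1,C(7,2\delta))$, and the inequality $|g|_\Gamma\le(7\delta+1)C(7,2\delta)$ is immediate by summing $\Gamma$-lengths over the at most $7\delta+1$ syllables of $g$ in $\hat\Gamma$.

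The most delicate step is the hyperbolicity of $u$ and $v$. Reconciling the ``globally shortest over cyclic permutations'' convention with the local-geodesic construction of $\alpha,\beta$ via Proposition~\ref{cor:Lemma431} demands that for a parabolic element some cyclic shift of the input word reduces in $\hat\Gamma$ to a single parabolic edge, which rests on the cyclic reducibility of words of the form $q_1\cdots q_k\, p\, q_k^{-1}\cdots q_1^{-1}$. A secondary technical point is that the BCP-ready quasi-geodesic pair can be assumed free of backtracking on the components of interest; this is covered by Lemma~\ref{lem:localgeodnobacktracking} for $\alpha,\beta$ together with the minimality of $g$ controlling $\sigma,\tau$.
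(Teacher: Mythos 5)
Your treatment of the conjugator bound itself is sound and follows the paper's route: exclude alternative (1) of Lemma~\ref{lem:hyperbolicdichotomy} to get $|g|_{\hat{\Gamma}}\le 7\delta+1$, then combine Corollary~\ref{cor:72delta}, Lemma~\ref{lem:isolated}(\ref{lem:isolatedhyperbolicelts}) and the BCP property of Definition~\ref{def:BCP} to bound each parabolic component of $g$ by $C(7,2\delta)$. The genuine gap is in your argument that $u$ and $v$ are hyperbolic. You assume that a parabolic element admits a \emph{cyclic conjugate of its input word} lying in some $P_i$ (``after cyclically reducing the input word for $u$ one could select a cyclic conjugate $u''\in P_i$''). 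That is false in general: $u$ parabolic means $u=hph^{-1}$ for some $p\in P_i$ and some $h\in G$, and $h$ need not be realizable by a cyclic shift of $w_u$; a cyclically reduced word in $F(S)$ can represent an element conjugate into $P_i$ without any of its cyclic permutations being a parabolic word. You flag this yourself when you say the step ``rests on the cyclic reducibility of words of the form $q_1\cdots q_k\,p\,q_k^{-1}\cdots q_1^{-1}$,'' but the input word need not have that form. Moreover, even granting the claim, the contradiction you draw does not follow: $\alpha$ and $\beta$ are the representatives fixed in the hypothesis, and exhibiting a \emph{different} pair of conjugate elements with relative length $1$ does not ``force $\max\{\ell_{\alpha},\ell_{\beta}\}=1$''; it only produces another quadrilateral, and the detour through Lemmas~\ref{lem:veryshortg}, \ref{lem:conjugateparabolic} and Corollary~\ref{cor:conjugatorforparabolic} leads nowhere toward a contradiction with $\ell_{\alpha}>86\delta+3$.

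The paper closes this step by exploiting the conjugator bound you have already established. Since $\alpha$ is a relative cyclic $(3,2\delta)$-quasi-geodesic by Corollary~\ref{cor:72delta}, the hypothesis $\ell_{\alpha}>86\delta+3$ gives $d_{\hat{\Gamma}}(A_1,A_2)\ge(\ell_{\alpha}-2\delta)/3\ge 28\delta+1$. If $v$ (say) were conjugate to some $p\in P_i$, one could run Lemma~\ref{lem:hyperbolicdichotomy} with the bottom side replaced by a representative of $p$ of relative length at most $1$; alternative (1) of that lemma still fails because $\ell_{\alpha}$ is unchanged, so a conjugator of relative length at most $7\delta+1$ exists, and the triangle inequality forces $|p|_{\hat{\Gamma}}\ge 28\delta+1-2(7\delta+1)>1$, a contradiction. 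The same applies to $u$. The missing ingredient in your write-up is precisely this propagation: the large relative displacement of $u'$ passes through the uniformly short conjugator to \emph{every} conjugate of $u$ and of $v$, and that is what rules out parabolicity --- not any property of cyclic shifts of the input words.
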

\begin{proof} Without loss of generality, suppose that $\ell_{\alpha}\geq \ell_{\beta}$.  Since, by Corollary~\ref{cor:72delta}, $\alpha$ and $\beta$ are relative cyclic $(3,2\delta)$-quasi-geodesics, the relative distance between the endpoints  $A_1$ and $A_2$ of $\alpha$ can be estimated in terms of the length $l_{\alpha}$ as follows: 
$d_{\hat{\Gamma}}(A_1,A_2)\geq \dfrac{l_{\alpha}-2\delta}{3}.$

In particular, since $l_{\alpha}\geq 86\delta+3$, the relative distance between the endpoints of $\alpha$ is at least $28\delta+1$. If $u$ and $v$ are conjugate then by Lemma~\ref{lem:hyperbolicdichotomy},  there is a conjugator of the relative length $7\delta+1$ or less, and it follows that the relative distance between the endpoints of $\beta$ is at least $28\delta+1-2(7\delta+1)>14\delta>1$. It follows that $u$ and $v$ are not parabolic elements.

The upper bound on the $\Gamma$-length of $g$ follows from Lemma~\ref{lem:hyperbolicdichotomy}, Corollary \ref{cor:72delta} and Lemma~\ref{lem:isolated}(\ref{lem:isolatedhyperbolicelts}).
\end{proof}
\subsection{Short elements} In this section we consider the case when $u$ and $v$ are such that $\max\{l_{\alpha},l_{\beta}\}\leq 86\delta+3$.  We assume that $u$ and $v$ are represented by relative geodesics $\gamma_u$ and $\gamma_v$, and consider the geodesic quadrilateral $Q_g$. In this section, by a shortest conjugating element for $u$ and $v$ we mean the label of a shortest relative geodesic $\sigma$ joining a point on $\gamma_u$ and a point on $\gamma_v$. If $Q_g$ is not a parabolic quadrilateral and  $\max\{|\gamma_{u}|_{\hat{\Gamma}},|\gamma_{v}|_{\hat{\Gamma}}\}\leq 4\delta$ then we call $Q_g$ a \textit{short base quadrilateral}. In particular, every semi-parabolic quadrilateral is short base. We say that a short base quadrilateral is \textit{minimal} if it does not contain any smaller short base quadrilateral. Note that in a minimal quadrilateral the synchronous parabolic components of the vertical sides are not connected.
\begin{lem}\label{lem:conjugatorforhyperbolics} Let $u$ and $v$ be hyperbolic elements such that $\max\{|\gamma_{u}|_{\hat{\Gamma}},|\gamma_{v}|_{\hat{\Gamma}}\}\leq 4\delta$, and let $f$ be a shortest conjugating element for $u$ and $v$. Then $f\in B(K^{(hyp)}_{4\delta}, C(3))$, where the bound on the relative length of $f$, $K^{(hyp)}_{4\delta}=\# B(4\delta, 2C(3))(16\delta+2)$, does not depend on $u$ and $v$. In particular, the length of $f$ can be bounded as follows:
\[
|f|_{\Gamma}\leq K^{(hyp)}_{4\delta}C(3)=\# B(4\delta, 2C(3))(16\delta+2)C(3).
\]
\end{lem}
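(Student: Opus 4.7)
The plan is a pigeonhole argument on the geodesic quadrilateral $Q_g=\sigma\circ\gamma_u\circ\tau^{-1}\circ\gamma_v^{-1}$ exploiting that its horizontal sides are forced to be short. First I bound the $\Gamma$-length of each parabolic component of $f$: by Lemma~\ref{lem:isolated}(\ref{lem:isolatedhyperbolicelts}), every parabolic component of $f$ appears isolated in either $\sigma$ or $\tau$; and by Lemma~\ref{lem:noselfintersection}(2) the paths $\sigma\circ\gamma_u$ and $\gamma_v\circ\tau$ are relative $(3,0)$-quasi-geodesics with common endpoints $A_0,A_2$. Applying the BCP property (Definition~\ref{def:BCP}) to this pair forces every such isolated component to have $\Gamma$-length at most $C(3)$. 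This establishes the parabolic-component part of $f\in B(K^{(hyp)}_{4\delta},C(3))$ and, once the relative-length bound is in place, delivers the final inequality $|f|_\Gamma\le K^{(hyp)}_{4\delta}C(3)$.

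To bound $n:=|f|_{\hat\Gamma}$, I would enumerate the vertices $V_0,\dots,V_n$ of $\sigma$. Since $|\gamma_u|_{\hat\Gamma},|\gamma_v|_{\hat\Gamma}\le 4\delta$, a thin-rectangle argument---split $Q_g$ into two geodesic triangles along a diagonal and apply $\delta$-hyperbolicity twice---produces, for every $V_i$, a vertex $W_{i'}$ on $\tau$ with $d_{\hat\Gamma}(V_i,W_{i'})\le 4\delta$. Let $h_i$ be the label of a shortest relative geodesic $\rho_i$ from $V_i$ to $W_{i'}$; then $|h_i|_{\hat\Gamma}\le 4\delta$, and applying BCP to the auxiliary quasi-geodesic quadrilaterals produced when $\rho_i$ is inserted yields $h_i\in B(4\delta,2C(3))$ (the factor of two in the component bound absorbs the possibility that a parabolic component of $\rho_i$ shares a coset with a component of $\sigma$ or $\tau$). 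Moreover, the minimality of $f$ forces the offset $i-i'$ to lie in a bounded window of at most $16\delta+2$ values, for otherwise one could splice $\sigma_i$, $\rho_i$, a suffix of $\tau$, and one of the short horizontal sides to produce a strictly shorter conjugator.

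Hence the pair $(h_i,i-i')$ takes values in a set of cardinality at most $\#B(4\delta,2C(3))\cdot(16\delta+2)=K^{(hyp)}_{4\delta}$. Assume $n>K^{(hyp)}_{4\delta}$. Pigeonhole yields $i<j$ with $(h_i,i-i')=(h_j,j-j')$. Writing $h:=h_i=h_j$, the middle segments $b_1\subset\sigma$ (from $V_i$ to $V_j$) and $b_2\subset\tau$ (from $W_{i'}$ to $W_{j'}$) have equal relative length $j-i=j'-i'$ and satisfy $b_1h=hb_2$ (by reading around the closed loop of the middle strip). Cutting out this middle strip and regluing the top and bottom pieces along $h$ produces a path of strictly smaller relative length that conjugates a cyclic permutation of $u$ to a cyclic permutation of $v$, contradicting the minimality of $f$. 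This yields $n\le K^{(hyp)}_{4\delta}$, and combined with the parabolic-component bound above, $|f|_\Gamma\le K^{(hyp)}_{4\delta}C(3)$.

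The main obstacle I anticipate is the cut-and-glue step: one must verify that after deleting the middle strip the new path really is a conjugator between some cyclic conjugate of $u$ and some cyclic conjugate of $v$, rather than an irrelevant element of $G$. This forces both coordinates of the matched pair to be used---the equal cross labels $h_i=h_j$ together with the balanced offset $i-i'=j-j'$ ensuring that $|b_1|_{\hat\Gamma}=|b_2|_{\hat\Gamma}$---and to be combined with the identity $vg=gu$ to check that the boundary of the reduced quadrilateral closes up correctly modulo cyclic shifts of $u$ and $v$. Verifying the $2C(3)$ bound on parabolic components of each $h_i$ by a careful BCP bookkeeping will be the other technically delicate piece.
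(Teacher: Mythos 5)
Your overall strategy---cut the quadrilateral into short cross-sections, observe that each cross-section lies in the finite set $B(4\delta,2C(3))$, and splice out a middle strip when two cross-sections repeat---is the same strategy the paper uses. But your implementation of the cross-sections breaks the splicing step, which is the heart of the argument. You match each vertex $V_i$ of $\sigma$ to a \emph{nearest} vertex $W_{i'}$ of $\tau$, allowing $i'\neq i$, and then pigeonhole on the pair $(h_i,\,i-i')$. Write $g=g_1b_1g_3=h_1b_2h_3$ with $|g_1|_{\hat\Gamma}=i$, $|g_1b_1|_{\hat\Gamma}=j$, $|h_1|_{\hat\Gamma}=i'$, $|h_1b_2|_{\hat\Gamma}=j'$. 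The middle strip gives $b_1h=hb_2$, and the top and bottom strips give $h=g_3u h_3^{-1}$ and $v=g_1hh_1^{-1}$, so after deletion you obtain $v=(g_1g_3)\,u\,(h_1h_3)^{-1}$. The balanced offset $i-i'=j-j'$ only guarantees $|b_1|_{\hat\Gamma}=|b_2|_{\hat\Gamma}$; it does not make $b_1$ and $b_2$ the same subword of $g$, so $g_1g_3$ and $h_1h_3$ are in general distinct elements and the spliced figure is \emph{not} a conjugation diagram, nor does it become one after cyclically permuting $u$ and $v$. You flagged exactly this as your ``main obstacle,'' but the resolution you propose does not close it.

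The missing idea is to take the cross-sections at the \emph{same} parameter on $\sigma$ and $\tau$, which is precisely what Lemma~\ref{lem:linearbound} arranges: there one cuts at $D=t(\sigma_1)$ and at $T_1=t(\tau_1)$ with $|\sigma_1|_{\hat\Gamma}=|\tau_1|_{\hat\Gamma}$, shows $d_{\hat\Gamma}(T_0,T_1)\le 2\delta$ using minimality of $g$, and accepts a cross-section of relative length $4\delta$ rather than $2\delta$. Then the two vertical sides of each strip carry \emph{identical} labels, each cross-section $c_i$ is a genuine conjugate of (a cyclic permutation of) $u$ and $v$, and repetition $c_i=c_j$ immediately yields a shorter conjugator $p_is_j$ where $p_i,s_j$ are the corresponding prefix and suffix of $g$. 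This is how the paper proceeds: it decomposes $Q_f$ into minimal short-base quadrilaterals with identical vertical labels of relative length at most $12\delta+2$, growing to $16\delta+2$ because consecutive horizontal sides may be different cyclic conjugates (not because of an offset window), and the horizontal sides are pairwise distinct elements of $B(4\delta,2C(3))$ by minimality of $f$. A secondary gap: your $2C(3)$ bound on parabolic components of the $h_i$ cannot be obtained by applying BCP to the raw sub-strips, since their vertical sides are arbitrary segments of $\sigma$ and $\tau$ rather than shortest conjugators; the paper handles this by replacing the horizontal sides of each minimal quadrilateral with cyclic geodesics and its vertical sides with shortest conjugators of cyclic permutations before invoking Lemmas~\ref{lem:noselfintersection} and~\ref{lem:isolated}(1).
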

\begin{proof} The quadrilateral $Q_f$, corresponding to the equality $u=fvf^{-1}$, consists of a sequence $Q_1,Q_2,\dots Q_s$ of minimal short base quadrilaterals glued along their horizontal sides. We number the minimal quadrilaterals so that the top horizontal side of $Q_1$ is $\gamma_u$ and the bottom horizontal side of $Q_s$ is $\gamma_v$. By Lemma~\ref{lem:linearbound}, the relative length of the vertical side $\gamma_i$ of $Q_i$ is bounded as follows: $|\gamma_{i}|_{\hat{\Gamma}}\leq 12\delta+2$. To bound the length of parabolic components, firstly consider each $Q_i$ separately. We replace the horizontal sides of $Q_i$ by cyclic geodesics and replace each vertical side of $Q_i$ by a shortest conjugator for cyclic permutations of the horizontal sides. When doing this, we may need to replace the horizontal sides of $Q_i$ by their cyclic conjugates. Now each $Q_i$ satisfies the conditions of Lemma~\ref{lem:noselfintersection} and of Lemma~\ref{lem:isolated}(1), and we conclude that the length of each parabolic component of the vertical side of $Q_i$ is bounded by $C(3),\ \forall i$. So, if $c_i$ is a horizontal side of $Q_i$ then $c_i\in B(4\delta,2C(3))$, for all $i$. However, the bottom side of $Q_i$ may not be the same cyclic conjugate as the top of $Q_{i+1}$, see Figure~\ref{fig:F2}(Right). When we glue the adjusted minimal rectangles, we may need to cyclically shift some of their horizontal sides and thus enlarge the relative length of the vertical sides. Let $Q_i$ correspond to the equality $ab=f_ic_if_i^{-1}$ and $Q_{i+1}$ correspond to the equality $c_{i+1}=f_{i+1}baf_{i+1}^{-1}$. We replace $f_i$ by $f'_i=a^{-1}f_i$ if $|a|_{\Gamma}\leq |b|_{\Gamma}$; otherwise, we replace $f_{i+1}$ by $f'_{i+1}=f_{i+1}b$. The length of the parabolic components does not change. The relative length of each vertical side can grow by $4\delta$ at most. It follows that $f'_i\in  B(16\delta+2,C(3))$.

The horizontal sides $c_i$ of the minimal rectangles are all distinct because $f$ is shortest possible. Therefore, $|f|_{\hat{\Gamma}}\leqslant\# B(4\delta, 2C(3))(16\delta+2)$, and the claim follows.
\end{proof}
A careful analysis of the argument in the proof of Lemma~\ref{lem:conjugatorforhyperbolics} shows that the requirement in the statement of the lemma that $u$ and $v$ be hyperbolic elements can be dropped, so long as no horizontal side of any of the quadrilaterals $Q_i$ consists of a single parabolic component. More precisely, we have the following.
\begin{cor}\label{cor:shortbasetower}
Let a quadrilateral $Q_f$ consist of a sequence $Q_1,Q_2,\dots Q_s$ of minimal short base quadrilaterals such that in each $Q_i$, the labels of the horizontal sides are written as hyperbolic words and the vertical sides have identical labels. Let $Q_1,Q_2,\dots Q_s$ be glued along their horizontal sides, as described in the proof of Lemma~\ref{lem:conjugatorforhyperbolics}. Assume that the horizontal sides of the minimal quadrilaterals are all distinct. Then the length of the vertical side $f$ of $Q_f$ is bounded by the constant from the statement of Lemma~\ref{lem:conjugatorforhyperbolics}.
\end{cor}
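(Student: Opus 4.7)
The plan is to re-run the proof of Lemma~\ref{lem:conjugatorforhyperbolics} almost verbatim and track the single spot where the hypothesis ``$u$ and $v$ hyperbolic'' was actually used. In that proof, hyperbolicity of $u$ and $v$ entered only through the assertion that Lemma~\ref{lem:isolated}(\ref{lem:isolatedhyperbolicelts}) applies to each minimal short base rectangle $Q_i$ after its horizontal sides are replaced by cyclic $(8\delta+1)$-local geodesics; that application requires only that the horizontal labels of $Q_i$ be hyperbolic words, which is precisely the new hypothesis. So the chain of bounds still goes through.

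Concretely, for each $i\in\{1,\dots,s\}$, treat $Q_i$ in isolation: replace its horizontal sides by cyclic relative $(8\delta+1)$-local geodesic representatives of the conjugacy classes of their labels (possibly passing to cyclic conjugates) and replace the vertical side by a shortest conjugator of those cyclic representatives. The resulting minimal quadrilateral has hyperbolic-word horizontal sides, so Lemma~\ref{lem:noselfintersection} and Lemma~\ref{lem:isolated}(\ref{lem:isolatedhyperbolicelts}) apply, bounding the $\Gamma$-length of every parabolic component of the vertical side by $C(3)$. Because $Q_i$ is short base (so each horizontal side has relative length at most $4\delta$), Lemma~\ref{lem:linearbound} bounds the relative length of the vertical side by $12\delta+2$. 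Thus each adjusted vertical side of $Q_i$ lies in $B(12\delta+2,C(3))$.

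To re-glue, the cyclic representative chosen for the bottom of $Q_i$ and for the top of $Q_{i+1}$ might differ by a cyclic shift, say $c_i=ab$ and $c_{i+1}=ba$; one absorbs the shorter of $a$ or $b$ into the adjacent vertical side, as in the proof of Lemma~\ref{lem:conjugatorforhyperbolics}. This operation does not change the parabolic-component bound and enlarges the relative length of a vertical side by at most $4\delta$, so each adjusted vertical side lies in $B(16\delta+2,C(3))$. Finally, the horizontal sides $c_1,\dots,c_s$ are pairwise distinct by hypothesis and each lies in $B(4\delta,2C(3))$, so $s\leq \#B(4\delta,2C(3))$. Concatenating the $s$ adjusted vertical sides yields
\[
|f|_{\hat\Gamma}\leq \#B(4\delta,2C(3))(16\delta+2)=K^{(hyp)}_{4\delta},
\]
and hence $|f|_{\Gamma}\leq K^{(hyp)}_{4\delta}\cdot C(3)$.

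There is no real obstacle; the only thing to verify is that no other step of the proof of Lemma~\ref{lem:conjugatorforhyperbolics} implicitly used that the overall top and bottom of the whole tower $Q_f$ be hyperbolic. Inspection shows it did not: every estimate was derived locally inside a single $Q_i$ and the gluing step is purely combinatorial. Consequently the stated bound transfers to the present more general setting.
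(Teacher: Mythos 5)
Your proof is correct and follows exactly the route the paper intends: the paper states this corollary with no separate argument, merely remarking that ``a careful analysis'' of the proof of Lemma~\ref{lem:conjugatorforhyperbolics} shows the hyperbolicity of $u$ and $v$ can be weakened to the horizontal labels being hyperbolic words, and your write-up is precisely that analysis (correctly noting that distinctness of the $c_i$, which the lemma derived from minimality of $f$, is now taken as a hypothesis). The only cosmetic point is that the step bounding parabolic components really rests on Lemma~\ref{lem:parabcompsofg}(2) together with minimality of each $Q_i$ (which rules out connected synchronous components of the vertical sides), since Lemma~\ref{lem:isolated}(1) as literally stated assumes the elements are hyperbolic rather than merely written as hyperbolic words; this matches the citation the paper itself uses.
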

\begin{Def}\label{def:Ki}
For each $i=1,2,\dots,m$, we define a constant $K_i$ as follows: if $t_1,t_2\in P_i$  are conjugate in $P_i$ and $|t_1|_{\Gamma},|t_2|_{\Gamma}\leq C(3)$ then there is $t\in P_i$ such that $t_1=tt_2t^{-1}$ and $|t|_{\Gamma}\leq K_i$.
\end{Def}
\begin{lem}\label{lem:conjugatorfor4delta}  Let $u$ and $v$ be parabolic elements such that $\max\{|\gamma_{u}|_{\hat{\Gamma}},|\gamma_{v}|_{\hat{\Gamma}}\}\leq 4\delta$. Assume that if $u\in\cup_{i=1}^m F(S_i)$ then $|\gamma_{u}|_{\Gamma}\leq C(3)$ and similarly, if $v\in \cup_{i=1}^m F(S_i)$ then $|\gamma_{v}|_{\Gamma}\leq C(3)$. 
If $h$ is a shortest conjugating element for $u$ and $v$ then $h\in B(K_{4\delta}, K)$, where the bound on the relative length $K_{4\delta}=K^{(hyp)}_{4\delta}+\sum\limits_{i=1}^{m} |S_i|^{C(3)}$ does not depend on $u$ and $v$, $K^{(hyp)}_{4\delta}$ is as in Lemma~\ref{lem:conjugatorforhyperbolics} and $K=\max\{C(3),K_1,K_2,\dots, K_m\}$ for $K_1,K_2,\dots, K_m$ as in Definition~\ref{def:Ki}. More precisely, the length of $h$ can be bounded as follows:
\[
|h|_{\Gamma}\leq \# B(4\delta, 2C(3))(16\delta+2)C(3)+\frac 12\sum\limits_{i=1}^{m}K_i\cdot|S_i|^{C(3)}.
\]
\end{lem}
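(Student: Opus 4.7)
The plan is to adapt the tower construction from the proof of Lemma~\ref{lem:conjugatorforhyperbolics}, decomposing $Q_h$ into a sequence $Q_1,Q_2,\dots,Q_s$ of minimal short base quadrilaterals glued along their horizontal sides, with at most a $4\delta$ cyclic shift between adjacent layers absorbed into a single vertical side as in Figure~\ref{fig:F2}(Right). Each $Q_i$ then has horizontal labels of relative length at most $4\delta$, and by Corollary~\ref{cor:72delta} together with Lemma~\ref{lem:isolated}(1) and Corollary~\ref{cor:conjugatorforparabolic}, the parabolic components of those horizontal labels can be taken of $\Gamma$-length at most $2C(3)$.

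I next split the layers into two classes. Call $Q_i$ \emph{hyperbolic} if neither horizontal label lies in $\bigcup_j F(S_j)$, and \emph{$P_j$-parabolic} if both horizontal labels lie in $F(S_j)$ for some $j$; the residual semi-parabolic case (the two horizontals lie in different $F(S_j)$'s, which can only happen at the top or bottom of the tower) is handled in exactly the same way as the parabolic case by invoking Lemma~\ref{lem:conjugateparabolic} on each horizontal side separately. For hyperbolic $Q_i$, Corollary~\ref{cor:shortbasetower} applies after the cyclic adjustment, yielding a vertical side in $B(16\delta+2,C(3))$. For $P_j$-parabolic $Q_i$, I invoke Lemma~\ref{lem:conjugateparabolic} to replace each horizontal label by a $P_j$-conjugate of $\Gamma$-length $\leq C(3)$ (the boundary cases $\gamma_u,\gamma_v$ already satisfy this by the lemma's hypothesis), so that both horizontals lie in $B_j$; Definition~\ref{def:Ki} then produces a vertical label of $\Gamma$-length $\leq K_j$ and relative length $1$.

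Since $h$ is shortest, no two distinct $Q_i$'s of the same type can share the same pair of horizontal labels, for otherwise the intermediate sub-tower could be collapsed, contradicting minimality of $|h|_{\hat\Gamma}$. Hence the number of hyperbolic layers is at most $\#B(4\delta,2C(3))$, contributing at most $K_{4\delta}^{(hyp)}$ to $|h|_{\hat\Gamma}$ and $K_{4\delta}^{(hyp)}\cdot C(3)$ to $|h|_\Gamma$. For each $j$, consecutive $P_j$-parabolic layers form sub-towers whose horizontal sides are distinct elements of $B_j$; counting them by pairing each layer with the two horizontals it consumes (and noting each horizontal is shared with one neighbor except the extremes) gives at most $\tfrac{1}{2}|B_j|\leq\tfrac{1}{2}|S_j|^{C(3)}$ such layers. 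Summing, the relative length is at most $K_{4\delta}^{(hyp)}+\sum_j|S_j|^{C(3)}=K_{4\delta}$, and the $\Gamma$-length is bounded by $\#B(4\delta,2C(3))(16\delta+2)C(3)+\tfrac{1}{2}\sum_{j=1}^{m}K_j|S_j|^{C(3)}$, proving the lemma.

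The main obstacle will be the bookkeeping at transitions between layers of different types: whenever a $P_j$-parabolic layer is adjacent to a hyperbolic layer or a $P_k$-parabolic layer with $k\neq j$, the cyclic shift needed to reconcile the shared horizontal side must be absorbed into a single vertical side without inflating either the relative length beyond $16\delta+2$ or the $\Gamma$-length of any parabolic component beyond $C(3)$. One must also verify that after these shifts the "distinct horizontals" count survives intact, so that the factor $\tfrac{1}{2}$ in the parabolic count is not lost when $P_j$-parabolic sub-towers are interspersed with layers of other types.
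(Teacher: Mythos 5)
Your proposal follows the paper's own proof essentially step for step: the same decomposition of $Q_h$ into minimal short base quadrilaterals glued (after cyclic adjustment) along horizontal sides, Corollary~\ref{cor:shortbasetower} for the layers with hyperbolic horizontal labels, Lemma~\ref{lem:conjugateparabolic} together with Definition~\ref{def:Ki} for the $P_j$-parabolic layers, and the same distinctness-of-horizontal-sides count giving $\#B(4\delta,2C(3))$ hyperbolic layers and $\tfrac12|S_j|^{C(3)}$ parabolic $P_j$-layers. The only case your trichotomy (hyperbolic / $P_j$-parabolic / semi-parabolic) omits is a layer whose two horizontal labels are one hyperbolic word and one parabolic word, which the paper disposes of by citing Lemma~\ref{lem:isolated}(2) and which is absorbed into the hyperbolic-layer count with the same $B(16\delta+2,C(3))$ estimate, so this does not affect the bound.
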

\begin{proof} With the notation from the proof of Lemma~\ref{lem:conjugatorforhyperbolics}, the minimal quadrilaterals $Q_i$ can have parabolic sides glued to parabolic quadrilaterals. In particular, some $Q_i$ may be semi-parabolic. By Lemma~\ref{lem:isolated}(2), Corollary~\ref{cor:conjugatorforparabolic} and Corollary~\ref{cor:shortbasetower}, the estimates provided in Lemma~\ref{lem:conjugatorforhyperbolics} apply to the minimal quadrilaterals $Q_i$. Since by Lemma~\ref{lem:conjugateparabolic} and by the assumption, the parabolic horizontal sides of all  quadrilaterals are always bounded by $C(3)$, the $\Gamma$-length of a vertical side in each parabolic quadrilateral is bounded by $K_i$ for the corresponding $i$. Note that since $h$ is a shortest conjugator, the intermediate parabolic horizontal sides of the quadrilaterals are all distinct. Hence, the number of parabolic $P_i$-quadrilaterals that may occur is bounded by $\frac 12|S_i|^{C(3)}$. The claim follows.
\end{proof}

\begin{thm}\label{thm:boundforshortuv} Let $G$ be a group hyperbolic relative to the set of subgroups $\mathcal{P}=\{P_1,\dots,P_m\}$, and let $u$ and $v$ be two elements conjugate in $G$. Let $g\in G$ be a shortest conjugator for cyclic conjugates $u'$ and $v'$ of $u$ and $v$. Let $\alpha$ and $\beta$ be $(8\delta+1)$-local geodesics representing $u'$ and $v'$, correspondingly. If $\max\{l_{\alpha},l_{\beta}\}\leq 86\delta+3$ and $Q_g$ is not a parabolic quadrilateral then 
\begin{enumerate}
\item $g\in B(176\delta+8,C(3))$, or else
\item  $u$ is conjugate to $z_u\in B(4\delta, C(3))$ by an element $g_u\in B(94\delta+5,C(3))$ and $v$ is conjugate to $z_v\in B(4\delta, C(3))$ by an element $g_v\in B(94\delta+5,C(3))$.
\end{enumerate}
\end{thm}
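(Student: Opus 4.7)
The plan is to follow the dichotomy of Lysenok's Lemma~\ref{lem:linearbound}, enhanced with BCP-based bounds on parabolic components. Since $\alpha,\beta$ are relative $(3,2\delta)$-quasi-geodesics by Corollary~\ref{cor:72delta}, and any relative geodesic representing $u'$ (resp.\ $v'$) is no longer than $\alpha$ (resp.\ $\beta$), the hypothesis $\max\{l_\alpha,l_\beta\}\leq 86\delta+3$ yields $\max\{|u'|_{\hat{\Gamma}},|v'|_{\hat{\Gamma}}\}\leq 86\delta+3$, whence $|u'|_{\hat{\Gamma}}+|v'|_{\hat{\Gamma}}+4\delta+2\leq 176\delta+8$. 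I split the proof based on whether $|g|_{\hat{\Gamma}}$ exceeds this threshold.

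If $|g|_{\hat{\Gamma}}\leq 176\delta+8$, I aim for conclusion (1). The relative length bound on $g$ is in hand, so it remains to bound the $\Gamma$-length of every parabolic component of $g$ by $C(3)$. I would pass to the geodesic quadrilateral $Q_g$ (based on $\gamma_u,\gamma_v$) and invoke Lemma~\ref{lem:noselfintersection}(2), which ensures that $\sigma\circ\gamma_u$ and $\tau^{-1}\circ\gamma_v^{-1}$ are $(3,0)$-quasi-geodesics with common endpoints. Since $Q_g$ is not parabolic, the configuration is either hyperbolic (both $u,v$ hyperbolic words), mixed (one of $u,v$ a parabolic word), or semi-parabolic; in the first two cases Lemma~\ref{lem:isolated} (applied to whichever side is a hyperbolic word) forces every parabolic component of $g$ to appear isolated on $\sigma\circ\gamma_u$ or on $\gamma_v\circ\tau$, while in the semi-parabolic case Corollary~\ref{cor:conjugatorforparabolic} gives the same conclusion. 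The BCP property (Definition~\ref{def:BCP}) with parameters $(3,0)$ then caps the $\Gamma$-length of each such isolated component by $C(3)$, yielding $g\in B(176\delta+8,C(3))$.

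If $|g|_{\hat{\Gamma}}>176\delta+8$, I aim for conclusion (2) via Lysenok's construction from the proof of Lemma~\ref{lem:linearbound}. Splitting $\sigma=\sigma_1\circ\sigma_2$ and $\tau=\tau_1\circ\tau_2$ with $|\sigma_1|_{\hat{\Gamma}}=|\tau_1|_{\hat{\Gamma}}$, $|\sigma_1|_{\hat{\Gamma}}>|v'|_{\hat{\Gamma}}+2\delta$, $|\sigma_2|_{\hat{\Gamma}}>|u'|_{\hat{\Gamma}}+2\delta$, and using $\delta$-slimness of the two triangles produced by the diagonal $\mu$ of $Q_g$ from $A_0$ to $A_2$, I obtain cut points $D$ on $\sigma$ and $T_1$ on $\tau$ at relative distance at most $4\delta$. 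The label $z$ of a relative geodesic between them satisfies $|z|_{\hat{\Gamma}}\leq 4\delta$ and is conjugate to both $u'$ and $v'$. I then let $\bar g_u,\bar g_v$ be shortest conjugators of cyclic conjugates for the pairs $(u,z)$ and $(v,z)$. Applying Lemma~\ref{lem:linearbound} to these pairs (using $|u'|_{\hat{\Gamma}},|v'|_{\hat{\Gamma}}\leq 86\delta+3$ and $|z|_{\hat{\Gamma}}\leq 4\delta$) gives either the direct bound $|\bar g_u|_{\hat{\Gamma}},|\bar g_v|_{\hat{\Gamma}}\leq 94\delta+5$ or a further common intermediate conjugate which I use to replace $z$, iterating until the bound holds. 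A final application of the Case~1 argument to $(u,z)$ with $\bar g_u$ (and to $(v,z)$ with $\bar g_v$) bounds their parabolic components by $C(3)$; absorbing short parabolic prefixes and suffixes of $z$ into the conjugators provides representatives $z_u,z_v\in B(4\delta,C(3))$, establishing (2).

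The main obstacle I anticipate is ensuring that the intermediate quadrilaterals $Q_{\bar g_u}$ and $Q_{\bar g_v}$ are themselves not parabolic, so that the Case~1 machinery can be re-invoked to bound their parabolic components by $C(3)$. A parabolic such quadrilateral would confine $u$ (or $v$) and $z$ into a common parabolic subgroup with a parabolic conjugator, which, by tracing the Lysenok reduction backwards, should contradict the hypothesis that $Q_g$ is not parabolic. Making this implication rigorous in all the sub-configurations -- including the subtle point that a cyclic conjugation can interchange a parabolic-word and a hyperbolic-word representation of the same element -- is the technical heart of the argument.
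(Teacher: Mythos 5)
Your proposal follows essentially the same route as the paper's proof: Lysenok's Lemma~\ref{lem:linearbound} supplies the relative-length dichotomy with the same arithmetic ($2(86\delta+3)+4\delta+2=176\delta+8$ for $g$, and $(86\delta+3)+4\delta+4\delta+2=94\delta+5$ for $g_u,g_v$ after a second application to the pairs $(u',z)$ and $(v',z)$), and the $\Gamma$-length of the parabolic components is capped at $C(3)$ via Corollary~\ref{cor:72delta}, Lemma~\ref{lem:isolated} and Corollary~\ref{cor:conjugatorforparabolic} together with BCP. Incidentally, your ``iterate until the bound holds'' step in case~(2) is not needed: if $\bar g_u$ is chosen shortest among conjugators of a cyclic conjugate of $u'$ into the relative $4\delta$-ball, a second application of Lysenok's construction would produce a strictly shorter such conjugator, so minimality terminates the argument immediately.

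There is, however, one sub-case your argument does not cover: the situation in which a parabolic component of $\gamma_u$ is \emph{connected} to a parabolic component of $\gamma_v$ across the quadrilateral $Q_g$. This can occur even when both $u$ and $v$ are written as hyperbolic words --- it is exactly the configuration of Lemma~\ref{lem:veryshortg} --- and in that case the shortest conjugator $g$ lies in a single parabolic subgroup $P_i$ and consists of a single parabolic component connected to components of both horizontal sides. That component is therefore not isolated in the closed path $(\sigma\circ\gamma_u)\circ(\gamma_v\circ\tau)^{-1}$, so neither Lemma~\ref{lem:isolated} nor a direct appeal to BCP bounds its $\Gamma$-length, and your case-division (hyperbolic/mixed/semi-parabolic) silently assumes this does not happen. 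The paper disposes of it separately by Corollary~\ref{cor:veryshortg}, which gives $1\leq|g|_{\Gamma}\leq C(3)$, hence $g\in B(1,C(3))\subset B(176\delta+8,C(3))$ and conclusion~(1) still holds. Your closing paragraph anticipates a related difficulty for the intermediate quadrilaterals in case~(2), but the connected-components configuration must be excised explicitly in case~(1) as well before Lemma~\ref{lem:isolated} can legitimately be invoked.
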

\begin{proof} Consider the geodesic quadrilateral $Q_g$. The estimate for the relative length of $g$ in (1) and for the relative length of $g_u$ and of $g_v$ in (2), as well as the existence of $z_u$ and $z_v$ in (2), follow from Lemma~\ref{lem:linearbound}. The bound on the $\Gamma$-length of the parabolic components of $g$, $g_u$ and $g_v$ follows from Corollary~\ref{cor:72delta}, Lemma~\ref{lem:isolated} and Corollary~\ref{cor:conjugatorforparabolic} if the parabolic components of $\gamma_u$ and $\gamma_v$ are not connected, and from Corollary~\ref{cor:veryshortg} if parabolic components of $\gamma_u$ and $\gamma_v$ are connected in $Q_g$.
\end{proof}
\subsection{Upper bound on the length of a conjugating element} Suppose that $u\in F(S_i)$, for some $i$.  If $[u]_{P_i}\cap B_i\neq\emptyset$ (see section~\ref{sec:notation}) then we set 
$M_u=\min_{t\in[u]_{P_i}\cap B_i} \{|y|_{\Gamma}\mid y\in P_i,\ u=yty^{-1} \};$
otherwise, we set $M_u=0$. Note that if $u\in B_i$ then $M_u=0$ as well. We also set $M_u=0$ if $u\notin F(S_1)\cup\dots\cup F(S_m)$. 

Informally, $M_u\neq 0$ if and only if $u\in P_i$ for some $i$, $u\notin B_i$, so that $u$ itself is not ``very short'', whereas the conjugacy class $[u]_{P_i}$ of $u$ in $P_i$ contains ``very short'' elements. In this case, $M_u$ equals the length of a shortest $y\in P_i$ that conjugates $u$ into $B_i$.  We define $M_v$ similarly. 
\begin{thm}\label{thm:upperboundconst} Let $G$ be a finitely generated group hyperbolic relative to the set of subgroups $\mathcal{P}=\{P_1,\dots,P_m\}$, and let $S_1,S_2,\dots,S_m$ be finite generating sets for the parabolic subgroups, so that $P_i=\left\langle S_i\right\rangle\forall i $. Let $u$ and $v$ be conjugate in $G$, and let $g$ be a shortest conjugating element for cyclic permutations of $u$ and $v$.
\begin{enumerate}
\item If $u$ and $v$ are hyperbolic elements of $G$ then there is an upper bound on the $\Gamma$-length of $g$ independent of $u$ and $v$.
\item Let $u$ and $v$ be parabolic, and if $u\in F(S_i)$ for some $i$ then suppose that $v\notin[u]_{P_i}$. Then
$$|g|_{\Gamma}\leq\max\{M_u+M_v, 2(94\delta+5)C(3)\}+\# B(4\delta, C(3))(16\delta+2)C(3)+\frac 12K\sum\limits_{i=1}^{m} |S_i|^{C(3)},$$ where $M_u$ and $M_v$ are as above, and $K$ is as in Lemma~\ref{lem:conjugatorfor4delta}.
\end{enumerate}
\end{thm}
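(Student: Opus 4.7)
The plan is to combine Theorem~\ref{thm:conjugatorforlongelts} (handling ``long'' $u,v$) with Theorem~\ref{thm:boundforshortuv} (handling ``short'' $u,v$), and in the short case chain the conjugators through ``short base'' representatives, where the final comparison is bounded by Lemma~\ref{lem:conjugatorforhyperbolics} in the hyperbolic case or by Lemma~\ref{lem:conjugatorfor4delta} in the parabolic case.

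For part (1), with $u,v$ hyperbolic, I split into two cases depending on whether $\max\{l_\alpha,l_\beta\}$ exceeds $86\delta+3$. If it does, Theorem~\ref{thm:conjugatorforlongelts} immediately gives $|g|_\Gamma\leq(7\delta+1)C(7,2\delta)$, an absolute constant. Otherwise $Q_g$ cannot be parabolic (because $u$ and $v$ are hyperbolic), so Theorem~\ref{thm:boundforshortuv} applies. Its alternative (1) yields $|g|_\Gamma\leq(176\delta+8)C(3)$ outright. Its alternative (2) furnishes $g_u,g_v\in B(94\delta+5,C(3))$ conjugating $u$ and $v$ to elements $z_u,z_v\in B(4\delta,C(3))$; since $u$ and $v$ are conjugate in $G$, so are $z_u$ and $z_v$, and they are still hyperbolic. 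Hence Lemma~\ref{lem:conjugatorforhyperbolics} bounds a shortest conjugator $h$ between $z_u$ and $z_v$ by $\#B(4\delta,2C(3))(16\delta+2)C(3)$. Then $g_u h g_v^{-1}$ conjugates a cyclic conjugate of $u$ to one of $v$, so by the shortness of $g$ its $\Gamma$-length is bounded by the sum $|g_u|_\Gamma+|h|_\Gamma+|g_v|_\Gamma$, which is an absolute constant independent of $u,v$.

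For part (2), with $u,v$ parabolic, Theorem~\ref{thm:conjugatorforlongelts} forces $\max\{l_\alpha,l_\beta\}\leq 86\delta+3$. The hypothesis that $v\notin[u]_{P_i}$ whenever $u\in F(S_i)$ excludes the possibility that $Q_g$ is a $P_i$-parabolic quadrilateral, so Theorem~\ref{thm:boundforshortuv} again applies; its alternative (1) is absorbed in the claimed bound. In alternative (2) I obtain $z_u,z_v\in B(4\delta,C(3))$ together with conjugators of relative length at most $94\delta+5$ and parabolic components of $\Gamma$-length at most $C(3)$. If $u\in F(S_i)$ I instead use the element $y_u\in P_i$ of $\Gamma$-length $M_u$ realizing the minimum in the definition of $M_u$, which conjugates $u$ into $B_i$; symmetrically for $v$. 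The combined contribution to $|g|_\Gamma$ of these two ``reduction'' conjugators is then at most $\max\{M_u+M_v,\,2(94\delta+5)C(3)\}$, which accounts for every mixed case. Because the hypotheses of Lemma~\ref{lem:conjugatorfor4delta} are met by the resulting short forms (relative length at most $4\delta$, and $\Gamma$-length at most $C(3)$ on any parabolic horizontal side), it bounds the remaining conjugator between them by $\#B(4\delta,2C(3))(16\delta+2)C(3)+\tfrac12\sum_i K_i|S_i|^{C(3)}\leq\#B(4\delta,2C(3))(16\delta+2)C(3)+\tfrac12 K\sum_i|S_i|^{C(3)}$. Summing these bounds gives the stated estimate.

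The main obstacle is the chaining step: a shortest $g$ need not literally factor as a composition of the reduction conjugators with $h$, so the argument has to produce \emph{some} conjugator of that form and then invoke that $g$ is shortest. One must also check, in part (2), that in every parabolic configuration the hypothesis $v\notin[u]_{P_i}$ is strong enough to rule out a parabolic $Q_g$: when $u\in F(S_i)$ and $v\in F(S_j)$ with $i\neq j$ the quadrilateral is semi-parabolic (hence short base but not parabolic), and when only one of $u,v$ is written as a parabolic word the quadrilateral is automatically non-parabolic. Finally, some bookkeeping is needed to see that after each reduction step the resulting short forms still satisfy the bounds on parabolic components required by Lemma~\ref{lem:conjugatorfor4delta}, so that it applies cleanly to close out the estimate.
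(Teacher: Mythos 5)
Your proof is correct and follows essentially the same route as the paper: the long case is dispatched by Theorem~\ref{thm:conjugatorforlongelts}, and in the short case the conjugator is chained as $g_u h g_v^{-1}$ with $g_u,g_v$ supplied by Theorem~\ref{thm:boundforshortuv}(2) and the middle conjugator bounded by Lemma~\ref{lem:conjugatorforhyperbolics} (hyperbolic case) or Lemma~\ref{lem:conjugatorfor4delta} (parabolic case), with $\max\{M_u+M_v,\,2(94\delta+5)C(3)\}$ accounting for the parabolic reductions exactly as in the paper's inequality $|g|_{\Gamma}\leq\max\{M_u+M_v,|g_u|_{\Gamma}+|g_v|_{\Gamma}\}+|h|_{\Gamma}$. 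Your observation that the lemmas actually yield $\#B(4\delta,2C(3))$ rather than the $\#B(4\delta,C(3))$ appearing in the theorem's displayed bound correctly flags a minor inconsistency in the paper's own statement, not a gap in your argument.
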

\begin{proof} If $\max\{l_{\alpha},l_{\beta}\}> 86\delta+3$ then the claim follows from Theorem~\ref{thm:conjugatorforlongelts}. Note that in this case, $u$ and $v$ are hyperbolic. 
If $u$ and $v$ are hyperbolic and short, that is, $\max\{l_{\alpha},l_{\beta}\}\leq 86\delta+3$, then $|g|_{\Gamma}\leq |g_u|_{\Gamma} +|g_v|_{\Gamma} +|f|_{\Gamma},$
where a constant bound on the length of $g_u,g_v$ is given in Theorem~\ref{thm:boundforshortuv}(2), and $f$ is as in Lemma~\ref{lem:conjugatorforhyperbolics}. This proves (1).

To prove (2), note that $|g|_{\Gamma}\leq\max\{M_u+M_v, |g_u|_{\Gamma} +|g_v|_{\Gamma}\}+|h|_{\Gamma}$, where $h$ is as in Lemma~\ref{lem:conjugatorfor4delta}, and a constant bound on the length of $g_u,g_v$ is given in Theorem~\ref{thm:boundforshortuv}(2). It depends on whether $u$ and $v$ are written as parabolic or hyperbolic words, what part of the estimate applies actually.
\end{proof}
\begin{rem}\label{rem:conjparabolic}
In general, if $u$ and $v$ are written as parabolic words in $G$ then no upper bound on the length of $g$ independent of $u$ and $v$ can be given, even if $u$ and $v$ are not conjugate in a parabolic subgroup, as we assume in Theorem~\ref{thm:upperboundconst}(2). Indeed, $M_u$ and $M_v$ depend on $u$ and $v$, unless both $u$ and $v$ are ``very short''. If $u$ and $v$ are conjugate in a parabolic subgroup, so that $v\in[u]_{P_i}$ for some $i$ then $g\in P_i$ and the length of $g$ is completely determined by the properties of $P_i$, which we have no control over. 

However, if the parabolic subgroups are abelian then $K_i=0, \forall i$ (see Definition~\ref{def:Ki}) and $M_u=M_v=0$. Therefore, the upper bound on the length of $g$ in Theorem~\ref{thm:upperboundconst}(2) does not depend on $u$ and $v$. Thus, we have a constant bound also in the case when $u$ and $v$ are parabolic.
\end{rem}

\section{Preliminary computations}\label{sec:Preliminary}
In this section we explain how we compute data used in our algorithms. Neither the data nor the computations depend on $u$ and $v$.

\subsection{Solution to the word problem in parabolic subgroups is given} 
In this section and in what follows we assume that solution(s) to the word problem in the parabolic subgroups of $G$ is part of the input of our algorithms.

 In the following theorem we collect some known results.
\begin{thm}\label{prop:precomputation}
Let $G$ be a finitely presented group hyperbolic relative to subgroups $P_1,P_2,\dots,P_m$. Given a finite presentation for $G$, generating sets $S_1,S_2,\dots,S_m$ for the parabolic subgroups and solution to the word problem in the parabolic subgroups, one can compute the following data.
\begin{enumerate}
\item A relative Dehn presentation for $G$ and a factor $N$ for a relative linear isoperimetric inequality for $G$ (Dahmani~\cite[Theorem 0.1]{DahmaniFindingRelHypStructures}, see also~\cite{DahmaniGuirardel}).
\item A hyperbolicity constant $\delta$ for the coned-off Cayley graph $\hat\Gamma$ of $G$.
\item Constants $C(2)$, $C(3)$ and $C(7,2\delta)$ (Osin ~\cite{OsinRelHypGps}).
\end{enumerate}
\end{thm}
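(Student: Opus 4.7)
The plan is to invoke three known results in sequence, each building on the previous, since the theorem is essentially a compilation of effective versions of standard facts.

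First I would apply Dahmani's theorem from \cite{DahmaniFindingRelHypStructures} directly: given the finite presentation of $G$, the generating sets $S_1,\dots,S_m$, and solutions to the word problem in each $P_i$, Dahmani's algorithm outputs both a relative Dehn presentation $\langle S_0, P_1,\dots,P_m \mid \mathcal R \rangle$ and the factor $N$ for the relative linear isoperimetric inequality. This is precisely assertion (1), and the inputs required by the algorithm match those given in the hypothesis.

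Next, for assertion (2), I would feed $N$ and $\mathcal R$ into the standard effective conversion from a linear relative isoperimetric inequality to a hyperbolicity constant for $\hat\Gamma$. The argument is analogous to the classical case of hyperbolic groups: a linear isoperimetric inequality with factor $N$ in $\hat\Gamma$ implies thinness of geodesic triangles, with thinness constant explicitly bounded in terms of $N$ and the maximal length $K$ of a word in $\mathcal R$ (a concrete bound of the form $\delta \leq f(N, K)$ can be extracted from, e.g., the argument in \cite{BridsonHaefliger} adapted to the coned-off setting). One then selects $\delta$ as any integer at least as large as this bound.

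Finally, for assertion (3), I would invoke Osin's quantitative version of the BCP property in \cite{OsinRelHypGps}: the BCP constants $C(\lambda,\varepsilon)$ are given by explicit formulas in terms of $\delta$, the quasi-geodesic parameters $(\lambda,\varepsilon)$, and the relative Dehn presentation. Substituting $(\lambda,\varepsilon)=(2,0)$, $(3,0)$, and $(7,2\delta)$ yields the three specific constants $C(2)$, $C(3)$, and $C(7,2\delta)$, each computable since all ingredients are computable from the previous two steps.

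The main obstacle here is really only bookkeeping: the statements in \cite{DahmaniFindingRelHypStructures} and \cite{OsinRelHypGps} are proved in forms that yield effective, if complicated, bounds, so no new mathematical content is required. The nontrivial point worth emphasizing is the passage from a numerical factor $N$ to a concrete $\delta$; tracing through the van Kampen diagram argument typically yields a bound that is polynomial in $N$ and $K$, and this is the step where one must be careful to produce an \emph{explicit} $\delta$ rather than merely asserting its existence.
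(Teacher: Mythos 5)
Your proposal is correct and follows essentially the same route as the paper: (1) is quoted directly from Dahmani, (2) converts the linear relative isoperimetric factor $N$ into a hyperbolicity constant $\delta$ for $\hat\Gamma$ via the standard effective argument (the paper cites Alonso et al., Lysenok, Ol'shanskii, Rebbechi and \cite[III.H Theorem 2.1]{BridsonHaefliger} for this), and (3) extracts the constants $C(2)$, $C(3)$, $C(7,2\delta)$ from Osin's quantitative bounds on isolated parabolic components of $(\lambda,\varepsilon)$-quasi-geodesics, exactly as you describe.
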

\begin{proof} (2) Using the factor $N$ from (1), we can apply results from \cite{AlonsoEtAl}, \cite{Lysenok90}, \cite{Olshanskii91} or  \cite{Rebbechi} (see also~\cite[III.H Theorem 2.1]{BridsonHaefliger} for details) to compute a hyperbolicity constant $\delta$ for the coned-off Cayley graph $\hat\Gamma$ of $G$. 

(3) Given a relative Dehn presentation $G=\left\langle X,P_1,\dots,P_k\mid R=1,\ R\in\mathcal{R}\right\rangle, $ we find the maximum relative length $M=\max_{R\in\mathcal{R}} l_{\hat{\Gamma}}(R);$ note that $M\leq 8\delta$. Next, we can compute a bound $B=B(\delta,\lambda,\varepsilon,D)$ on the relative Hausdorff distance between two relative $(\lambda,\varepsilon)$-quasi-geodesics, whose endpoints are distance $D$ apart (see for instance,~\cite[Lemma 3.8]{OsinRelHypGps}). Now, the proof of \cite[Proposition 3.15]{OsinRelHypGps} shows that the $\Gamma$-length of an isolated parabolic component $p$ of a  relative $(\lambda,\varepsilon)$-quasi-geodesic is bounded as follows:
$l_{\Gamma}(p)\leq (8\lambda B+2\varepsilon+2B)MK.$ Here, and only here, in this proof, $K$ denotes the constant from \cite[Lemma 3.8]{OsinRelHypGps}.

The paths that we consider are $(7,2\delta)$-, $(3,0)$-, or $(2,0)$-quasi-geodesics. Therefore, we are interested in the following  upper bounds on the length of an isolated parabolic component: 
\begin{align*}
C(7,2\delta) &=(56 B_7+4\delta+2B_7)MK,\ \text{where }\ B_7=B(\delta,7,2\delta,0), \\
C(3) &=C(3,0)=24B_3MK,\ \text{where}\ B_3=B(\delta,3,0,0),\\ 
C(2) &=C(2,0) =18B_{2}MK, \ \text{where}\ B_{2}=B(\delta,2,0,0).
\end{align*}
\end{proof}
We use the constants from Theorem~\ref{prop:precomputation}(3) and the solution to the word problem in parabolic subgroups to compute the following lists of parabolic elements:
\begin{align*}
\mathcal{L}_1 &= \{p\in F(S_1)\cup F(S_2)\cup\dots\cup F(S_m)\mid |p|_{\Gamma}\leq C(2)\} \\
\mathcal{L}_2 &= \{q\in F(S_1)\cup F(S_2)\cup\dots\cup F(S_m)\mid |q|_{\Gamma}\leq C(7,2\delta)\} \\
\mathcal{L}_3 &=\{p\in F(S_1)\cup F(S_2)\cup\dots\cup F(S_m)\mid |p|_{\Gamma}\leq C(3)\}.
\end{align*}
Recall that $B(r_1,r_2)$ denotes the set of those elements of $G$ whose relative length does not exceed $r_1$ and the length of each parabolic component is bounded above by $r_2$ (see section~\ref{sec:notation}). Clearly, there are finitely many such elements and we can effectively enumerate all of them. So, we use the solution to the word problem in $G$ and the lists $\mathcal{L}_1$ and $\mathcal{L}_2$ to compute the following subsets of the Cayley graph $\Gamma$ of $G$:
\begin{align*}
\mathcal{L}_4 &= B(7\delta+1,C(7,2\delta)) \\
\mathcal{L}_5 &= B(16\delta+1,C(2)) \\
\mathcal{L}_6 &= B(2(274\delta+9),C(7,2\delta))\\
\mathcal{L}_8 &=\{w\in F(S)\mid w\in B(86\delta+3,2C(3)),\ \text{and}\ w\ \text{is a relative geodesic}\};\\
\mathcal{L}_9 &=\{w\in F(S)\mid |w|_{\Gamma}\leq 4\delta C(3)\}.
\end{align*}
% % % % %

\subsection{Solution to the conjugacy problem in parabolic subgroups is given} 
\begin{prop}\label{parabolicconjclasses} Let $G$ be a group hyperbolic relative to $\mathcal{P}=\{P_1,\dots,P_m\}$, and let $S=S_0\cup S_1\cup\dots\cup S_m$ be a generating set for $G$ such that $P_i=\left\langle S_i\right\rangle $, for all $i=1,2,\dots,m$. Suppose that solutions to the word problem and to the conjugacy problem in each one of the parabolic subgroups are given. Then there is an algorithm to compute the bounded conjugacy classes $$\mathcal{BCC}=\{[w]_{B(4\delta,C(3))}\mid w\in B(4\delta,C(3)) \}.$$ In particular, we can compute the list $\mathcal{L}_{11}\subset\mathcal{L}_3\times\mathcal{L}_3$ of all the pairs $(p,q)$ of conjugate elements $p,q\in\mathcal{L}_3$ and a set of conjugating elements $\mathcal{L}_7=\{g_{pq}\in G\mid q=g_{pq}pg_{pq}^{-1}, (p,q)\in \mathcal{L}_{11} \}$.
\end{prop}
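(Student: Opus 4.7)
The plan is to combine a finite enumeration with Theorem~\ref{thm:upperboundconst}.

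First I enumerate $B(4\delta,C(3))$. Every element of $B(4\delta,C(3))$ admits a representative of the form $x_1x_2\cdots x_k$ with $k\leq 4\delta$ and each $x_j$ lying either in $S_0$ or in $\mathcal{L}_3$ (as a parabolic element of some $P_i$ of $\Gamma$-length at most $C(3)$). There are only finitely many such formal products, and sieving them with the word problem in $G$---obtainable from the hypothesized data via Farb's algorithm and Theorem~\ref{prop:precomputation}---yields a finite list of distinct representatives of $B(4\delta,C(3))$. The set $\mathcal{L}_3$ is a sublist of this, and the cardinalities of both sets are now known.

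Second, I compute the constants that enter the upper bound of Theorem~\ref{thm:upperboundconst}. Aside from $\delta$, $C(3)$ and $C(7,2\delta)$, which are provided by Theorem~\ref{prop:precomputation}, the only new ingredient is the family of constants $K_i$ from Definition~\ref{def:Ki}. For each $i$ and each pair $(t_1,t_2)\in B_i\times B_i$, the given solver for the conjugacy problem in $P_i$ decides whether $t_1$ and $t_2$ are $P_i$-conjugate; whenever they are, I brute-force search $P_i$ in order of increasing $\Gamma$-length, using the word problem in $P_i$, until a conjugator is found. This search terminates (since conjugacy has already been confirmed), and $K_i$ is the maximum length obtained over all such pairs. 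Simultaneously this produces a table of $P_i$-conjugators for all conjugate pairs in $B_i\times B_i$. For any $u\in B(4\delta,C(3))$, its single possible parabolic representative (if it has one) lies in some $B_i$, so $M_u=0$; thus Theorem~\ref{thm:upperboundconst} provides a single constant $N_0$, depending only on the precomputed data, that bounds the $\Gamma$-length of a shortest $G$-conjugator whenever either part (1) or part (2) of that theorem applies.

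Third, for each ordered pair $(u,v)\in B(4\delta,C(3))\times B(4\delta,C(3))$, I test $G$-conjugacy in two steps. If $u,v$ both lie in a common $F(S_i)$, I query the conjugacy problem in $P_i$; if it answers positively, I read off a $P_i$-conjugator from the table built above, which is automatically a $G$-conjugator, and record it. In every remaining case---$u,v$ not both in a common parabolic subgroup, or both in $F(S_i)$ but not $P_i$-conjugate---Theorem~\ref{thm:upperboundconst} guarantees that if $u,v$ are $G$-conjugate then there is a conjugator of $\Gamma$-length at most $N_0$. I enumerate all candidates $g$ with $|g|_\Gamma\leq N_0$ and test $gug^{-1}=v$ using the word problem in $G$, recording a conjugator if one is found and otherwise concluding that $u,v$ are not $G$-conjugate. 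Collecting this data across all pairs yields $\mathcal{BCC}$; restricting to $\mathcal{L}_3\times\mathcal{L}_3$ yields $\mathcal{L}_{11}$, and the recorded conjugators form $\mathcal{L}_7$.

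The main obstacle is merely verifying that the hypotheses of Theorem~\ref{thm:upperboundconst} are satisfied and that the resulting bound is truly a computable constant. This hinges on two observations: that any $u\in B(4\delta,C(3))\cap F(S_i)$ automatically has $|u|_\Gamma\leq C(3)$, so $u\in B_i$ and $M_u=0$; and that the case split on ``$P_i$-conjugate in a common parabolic subgroup'' is precisely what makes the residual hypothesis $v\notin [u]_{P_i}$ of Theorem~\ref{thm:upperboundconst}(2) hold in the complementary branch. Once these are checked, correctness of the algorithm is immediate; the procedure is computable (though not necessarily efficient), which is all that the proposition requires.
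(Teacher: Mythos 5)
Your overall plan---precompute the constants $K_i$ and then run an all-pairs conjugacy test over the finite set $B(4\delta,C(3))$ using one aggregated length bound from Theorem~\ref{thm:upperboundconst}---is organized differently from the paper's proof, which builds the classes outward from representatives: it computes $[p]_{P_i}\cap B_i$ and the $K_i$ exactly as you do, but then, for each $p\in\mathcal{L}_3$, sweeps the conjugates $h^{-1}ph$ over all $h$ in the precomputed set $B(K_{4\delta},K)$ of Lemma~\ref{lem:conjugatorfor4delta} to harvest the parabolic classes, and finally treats the leftover hyperbolic elements with the bound of Lemma~\ref{lem:conjugatorforhyperbolics}. The difference is not merely cosmetic: your version has a genuine gap in the case analysis.

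The gap is that your dichotomy is syntactic (``$u$ and $v$ both lie in a common $F(S_i)$'') while the residual hypothesis of Theorem~\ref{thm:upperboundconst}(2) is semantic (``$v\notin[u]_{P_i}$''), and these do not coincide. Nothing in the definition of $B(4\delta,C(3))$ forces its members to be geodesic or even local geodesic words, so it can contain a freely reduced word $v$ that is not a word in any $F(S_i)$ yet represents an element of $P_i$ (for instance a word containing a subword equal to $1$ in $G$, with all parabolic components of $\Gamma$-length at most $C(3)$ and relative length of the underlying element at most $1$). If $u\in F(S_i)$ and such a $v$ satisfies $v\in[u]_{P_i}$, your algorithm lands in the ``remaining case'', but Theorem~\ref{thm:upperboundconst}(2) does not apply there, and by Remark~\ref{rem:conjparabolic} no bound independent of the internal conjugacy geometry of $P_i$ exists: the shortest conjugator essentially lies in $P_i$ and is controlled neither by your $N_0$ nor by your $K_i$ table, which only covers pairs of parabolic \emph{words} in $B_i\times B_i$. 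Your enumeration up to $N_0$ can therefore miss the conjugator and wrongly declare the pair non-conjugate, so $\mathcal{BCC}$ would be computed incorrectly. The paper sidesteps this by never testing such pairs head-on: every parabolic element of $B(4\delta,C(3))$ is reached from some $p\in\mathcal{L}_3$ by a conjugator in $B(K_{4\delta},K)$, a set whose definition already folds in the $K_i$. To repair your argument you would need first to replace each parabolic element of $B(4\delta,C(3))$ by a conjugate written as a word in some $F(S_i)$ (recording the conjugators), or to organize the search around representatives as the paper does. A further, minor, issue: Theorem~\ref{thm:upperboundconst} bounds a conjugator between \emph{cyclic permutations} of $u$ and $v$, so for the literal test $gug^{-1}=v$ you must either also cycle the words or enlarge $N_0$ by $8\delta C(3)$.
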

\begin{proof} Let $B_i=P_i\cap\mathcal{L}_3$ be the set of the elements of length at most $C(3)$ in $P_i$. Firstly, using the soluton to the conjugacy problem in the parabolic subgroups, we compute the bounded conjugacy classes $\{[p]_{P_i}\cap B_i\mid p\in B_i, i=1,2,\dots,m \}$ of very short elements of $P_i$, for each $i$. We can also find a conjugating element  for each pair $q_1,q_2\in [p]_{P_i}\cap B_i$, $\forall p\in B_i, i=1,2,\dots,m$. This allows us to compute the constants $K_i$ from definition~\ref{def:Ki}. Now we can apply Lemma~\ref{lem:conjugatorfor4delta} to compute $[p]_{B(4\delta,C(3))}$ $\forall p\in B_i,\forall i$. More precisely, we conduct exhaustive search of all the elements in $[p]_{B(4\delta,C(3))}$ by taking all the elements $h\in B(K_{4\delta},K)$ (with the notation of Lemma~\ref{lem:conjugatorfor4delta}) and checking for each one of them whether or not $h^{-1}ph=w$ for some $w\in B(4\delta,C(3))$. In particular, if $w\in \mathcal{L}_3$ then we add the pair $(p,w)$ to $\mathcal{L}_{11}$ and $h$ to $\mathcal{L}_7$. According to Lemma~\ref{lem:isolated}(2) and Lemma~\ref{lem:noselfintersection}(2), if $w\in B(4\delta,C(3))$ is a parabolic element of $G$ then necessarily it is conjugate to some $p\in \mathcal{L}_3$. Therefore, the argument in the proof of Lemma~\ref{lem:conjugatorfor4delta} implies that the exhaustive search procedure described above provides all the conjugacy classes of parabolic elements in $\mathcal{BCC}$.

It remains to compute the bounded conjugacy classes $[w]_{B(4\delta,C(3))}$ of all the hyperbolic elements $$w\in B(4\delta,C(3))\setminus \bigcup_{p\in\mathcal{L}_3} [p]_{B(4\delta,C(3))}.$$ This can be done using the estimate from Lemma~\ref{lem:conjugatorforhyperbolics}.
% % % % % % %
%In fact, according to Corollary~\ref{cor:shortbasetower}, we only need to use the estimate from Lemma~\ref{lem:conjugatorforhyperbolics} to find pairs of elements from $B(4\delta,C(3))$ that are conjugate in $G$. 
% % % % % %

\end{proof}

For our solution to the conjugacy problem we also need to pre-compute the following sets:
\begin{align*}
% Lemma~\ref{lem:replacebygeodesics} 4.5
\mathcal{L}_{88} &=\{(w_1,w_2)\in \mathcal{L}_8\times\mathcal{L}_8\mid w_1,w_2\ \text{are conjugate in}\ G\}; \\
\mathcal{L}_{10} &=\{w\in \mathcal{L}_8\mid (w,q)\in \mathcal{L}_{88}\ \text{with}\ q\in\mathcal{L}_{3}  \}.
\end{align*}
For our solution to the conjugacy search problem, we also need to pre-compute the set $$\mathcal{L}_{12}=\{g_{wz}\in G\mid z=g_{wz}wg_{wz}^{-1},\  (w,z)\in \mathcal{L}_{88}\}$$ of conjugating elements for the pairs from $\mathcal{L}_{88}$.

We need the estimate from Lemma~\ref{lem:linearbound} and the set $\mathcal{BCC}$ from Proposition~\ref{parabolicconjclasses} to compute $\mathcal{L}_{88}$. Then it is straightforward to find $\mathcal{L}_{10}$ and $\mathcal{L}_{12}$.

\section{Algorithms}\label{sec:Alg}
\subsection{Curve shortening} 

The complexity of the algorithms below depends on the (maximum) complexity $O(C_w^{(par)}(n))$ of the solution to the word problem in the parabolic subgroups. Some algorithms use the solution to the conjugacy problem in the parabolic subgroups; we denote the (maximum) complexity of that by $O(C_c^{(par)}(n))$. Finally, the algorithm for the conjugacy search problem uses the solution to the conjugacy search problem in parabolic subgroups, the complexity of which we denote by $O(C_{search}^{(par)}(n))$.

The following lemma is proved in~\cite{Farb}.
\begin{lem}\label{lem:Farb} (Farb) Let $G$ be a relatively hyperbolic group defined by a Dehn presentation, and suppose that solution to the word problem in parabolic subgroups is given. There is an algorithm that takes as input a relative Dehn presentation $G=\left\langle S_0,P_1,\dots,P_k\mid R=1,\ R\in\mathcal{R}\right\rangle,$ finite  generating sets $S_1,S_2,\dots,S_m$ for the parabolic subgroups and a word in the generators $w_u\in F(S)$, where $S=\cup_{i=0}^m S_i$, and computes a relative $(8\delta+1)$-local geodesic $\rho$ such that $lab(\rho)=u$ in $G$; here $u$ is the element of $G$ defined by $w_u$.

If the complexity of the word problem in parabolic subgroups is $O(C_w^{(par)}(n))$ then the complexity of the algorithm is $O(C_w^{(par)}(\bar{L})\log \bar{L})$, where $\bar{L}=|w_u|_{S}$ is the length of the word $w_u$ in $F(S)$.
\end{lem}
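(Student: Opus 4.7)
The plan is to follow Farb's original curve-shortening procedure, taking as given the relative Dehn presentation, hyperbolicity constant $\delta$, and word problem oracles for $P_1,\dots,P_m$ produced in Theorem~\ref{prop:precomputation}. First I would scan $w_u$ left to right, maintaining a current path $\rho$ in $\hat\Gamma$ that is a relative $(8\delta+1)$-local geodesic and represents, at each stage, a prefix of $u$. Appending the next generator of $w_u$ to $\rho$, I would then examine the trailing window of $\rho$ of relative length at most $8\delta+1$ and test whether it is still a geodesic in $\hat\Gamma$. Any subword of $\hat\Gamma$-length at most $8\delta+1$ either (i) is already a relative geodesic, (ii) contains a subword that is the longer half of some defining relator and can be Dehn-reduced, or (iii) contains two parabolic letters lying in the same coset of some $P_i$, which can then be fused into a single parabolic component by one call to the word problem oracle in $P_i$.

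The shortening step would apply case (ii) or (iii) to strictly decrease the relative length of the window, then re-examine the new trailing window of length $8\delta+1$; once no further shortening applies I would move on and absorb the next input letter. The fact that this terminates with a globally relative $(8\delta+1)$-local geodesic follows because the check is performed on every window of length $\le 8\delta+1$ along the final $\rho$, and equality $lab(\rho)=u$ in $G$ is preserved by every Dehn replacement and every parabolic fusion. The relative Dehn hypothesis, combined with the standard fact (used also in Theorem~\ref{thm:klocalgeodesics}) that $k$-local geodesics in a $\delta$-hyperbolic space are honest quasi-geodesics once $k>8\delta$, is what guarantees that the local test is strong enough to recognize non-geodesic windows.

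To obtain the running time $O(C_w^{(par)}(\bar L)\log\bar L)$, I would store $\rho$ in a balanced binary tree keyed by position, so that each Dehn replacement (of length $O(\delta)$) and each parabolic fusion is a local edit affecting $O(1)$ nodes in $O(\log\bar L)$ bookkeeping time, and so that locating the trailing window is also $O(\log \bar L)$. Each of the $O(\bar L)$ inserted letters can be involved in only a bounded number of shortenings before being frozen, so the total number of oracle calls is $O(\bar L)$; each oracle call resolves an equality in some $P_i$ between two words of length at most $8\delta+1$ times a parabolic component that has already been reduced, so its cost is bounded by $O(C_w^{(par)}(\bar L))$. Multiplying gives the stated bound.

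The main obstacle is making sure that the amortized counting of shortenings really is $O(\bar L)$ rather than worse, and that each local edit does not cascade into the middle of $\rho$. This is handled exactly as in Farb's paper: a window test failure can always be blamed on a short suffix of the recent input, and the balanced-tree representation ensures that edits remain local. With that amortization in hand, the correctness and complexity claims assemble to give the lemma.
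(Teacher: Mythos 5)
Your overall strategy --- Farb's curve-shortening via local window tests, with the parabolic components pre-reduced by the word-problem oracles --- is the same as the paper's, but two steps do not go through as written. The first is your trichotomy for shortening a bad window. A subword $z$ of relative length at most $8\delta+1$ can fail to be a relative geodesic without containing more than half of a defining relator and without containing two parabolic letters in the same coset: Dehn's algorithm detects words representing the \emph{identity}, not non-geodesic words, and in the identity $zy^{-1}=1$ (where $y$ is a shorter relative geodesic with the same endpoints) the long half of the relator may lie in $y^{-1}$ or straddle the two factors, so it need not be visible inside $z$. The paper's fix is the essential content of the lemma: take $z$ minimal non-geodesic, so that $z$ is a relative $(2,0)$-quasi-geodesic; observe that every parabolic component of the closed path $zy^{-1}$ is isolated; and invoke the BCP property to get $l_{\Gamma}(zy^{-1})\le (16\delta+1)C(2)$, i.e.\ $zy^{-1}$ lies in the finite precomputed set $\mathcal{L}_5=B(16\delta+1,C(2))$. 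The shortening rule is then ``replace the longer part of a word in $\mathcal{L}_5$ by its shorter part,'' a lookup in a finite table built during the preliminary computations; with only the original relators $\mathcal{R}$ as your rule set, the algorithm can stall on a window it cannot shorten and never certify that $\rho$ is an $(8\delta+1)$-local geodesic.

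The second gap is the complexity count. Multiplying $O(\bar L)$ oracle calls by a per-call cost of $O(C_w^{(par)}(\bar L))$ gives $O(\bar L\, C_w^{(par)}(\bar L))$, which is \emph{not} the stated bound $O(C_w^{(par)}(\bar L)\log\bar L)$ --- already for $C_w^{(par)}$ linear your count gives $O(\bar L^2)$ versus the claimed $O(\bar L\log\bar L)$. The paper instead charges all parabolic work against the total length of the parabolic subwords, which is at most $\bar L$, and uses superadditivity: if the calls involve components of lengths $n_1,\dots,n_t$ with $n_1+\dots+n_t\le\bar L$, then $\sum_i C_w^{(par)}(n_i)\le C_w^{(par)}(n_1+\dots+n_t)\le C_w^{(par)}(\bar L)$, so the oracle calls cost $O(C_w^{(par)}(\bar L))$ in total; the remaining $\log\bar L$ factor is the bookkeeping overhead from Farb's Theorem~\ref{thm:Farb}, incurred in the worst case when the input path backtracks to the same coset many times. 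You would need to replace your ``number of calls times worst-case cost'' estimate by an amortized charge of this kind to reach the claimed running time.
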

\begin{proof} We give a sketch of the proof here. Let $k=8\delta+1$.

First, using the solution to the word problem in parabolic subgroups, we replace every maximal parabolic component of $w_u$ by a geodesic word in $\mathcal{L}_1$, whenever possible; we call $\bar{w}_u$ the word that we obtain in this way. A maximal parabolic component of $w_u$ is identified as a maximal subword in $F(S_i)$ for some $i$; the total length of the parabolic subwords of $w_u$ is bounded above by $\bar{L}=|w_u|_{S}$. Therefore, the complexity of this procedure is $O(C_w^{(par)}(\bar{L}))$.

We replace $\bar{w}_u$ with a relative $k$-local geodesic $\rho$, as follows. If the word $\bar{w}_u$ is not (the label of) a relative $k$-local geodesic then it has a subword $z$ of length at most $k$ so that every subword of $z$ is a relative geodesic but $z$ is not a relative geodesic. It follows that $z$ is a relative 2-quasi-geodesic. Let $y$ be a relative geodesic joining the endpoints of $z$. One shows that every parabolic component in the path $zy^{-1}$ is isolated. Therefore, $l_{\Gamma}(zy^{-1})\leq (k+k-1)C(2)=(16\delta+1)C(2)$, or $zy^{-1}\in\mathcal{L}_5$. So, to obtain a relative $(8\delta+1)$-local geodesic $\rho$, it suffices to replace every longer part of a word in $\mathcal{L}_5$ by its shorter part, whenever it occurs in $\bar{w}_u$. It may happen that after a replacement two or three parabolic components merge: for instance,  having replaced $x$ in a subword $xp$, we could obtain $wqp$ with $q$ and $p$ in the same parabolic subgroup. In this case we replace the new maximal parabolic component $qp$ by a geodesic  word in $\mathcal{L}_1$, if possible, and continue the computation. According to~\cite[Theorem 3.7]{Farb}, the computation time is $O(C_w^{(par)}(\bar
L)\log \bar{L})$, and the worst case scenario occurs when the path labelled by $w_u$ backtracks often, that is, it visits the same left coset of a parabolic subgroup many times.
\end{proof}
\begin{cor} \label{cor:lengthofrho}
With the notation of Lemma~\ref{lem:Farb}, the $\Gamma$-length of the relative $(8\delta+1)$-local geodesic $\rho$ can be bounded as follows: $l_{\Gamma}(\rho)<C(2)|w_u|_{S}$.
\end{cor}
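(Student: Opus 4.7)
The plan is to track the $\Gamma$-length of the current word throughout Farb's curve-shortening procedure, and to verify that every step of the algorithm is $\Gamma$-nonincreasing. Since the initial word $w_u \in F(S)$ is freely reduced, the path it traces in $\Gamma$ has length exactly $|w_u|_S$; if every replacement in the algorithm can only shorten the $\Gamma$-length of the path, then the final $\rho$ will satisfy $l_{\Gamma}(\rho) \leq |w_u|_S$, and the desired bound follows at once since $C(2) > 1$ (visible from the defining formula $C(2) = 18 B_2 M K$ in the proof of Theorem~\ref{prop:precomputation}).

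First, I would handle the preparation step $w_u \mapsto \bar{w}_u$: each maximal parabolic subword of $w_u$, representing some element $p \in P_i$, is replaced by a geodesic word for $p$ in $\Gamma_i$ (drawn from $\mathcal{L}_1$ whenever possible). Since geodesic representatives are $\Gamma$-shortest, this step cannot increase the $\Gamma$-length, so $l_{\Gamma}(\bar{w}_u) \leq |w_u|_S$. The remainder of the plan addresses the curve-shortening loop. Each iteration identifies a subword $z \in \mathcal{L}_5$ that fails to be a relative geodesic and swaps it for a relative geodesic $y$ between the same endpoints. By drawing every parabolic component of $y$ (and every merged parabolic component created by the swap) as a $\Gamma$-geodesic in the appropriate $P_i$, I would argue that $l_{\Gamma}(y) \leq l_{\Gamma}(z)$: the relative length strictly drops, and because $z \in \mathcal{L}_5 = B(16\delta+1, C(2))$ already has parabolic components of bounded $\Gamma$-length, a judicious choice of $y$ keeps the total $\Gamma$-length under control.

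Stringing the two stages together gives $l_{\Gamma}(\rho) \leq l_{\Gamma}(\bar{w}_u) \leq |w_u|_S$, and the strict inequality $l_{\Gamma}(\rho) < C(2)|w_u|_S$ then follows from $C(2) > 1$.

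The main obstacle is the $\Gamma$-nonincreasing property of the replacement $z \mapsto y$ in the curve-shortening loop. A relative geodesic need not be a $\Gamma$-geodesic, and, a priori, a shorter relative length does not imply a shorter $\Gamma$-length. The key observation that should unlock this is that $z$ itself is a path of $\Gamma$-length $l_{\Gamma}(z)$ from $z_-$ to $z_+$ whose parabolic components are already bounded by $C(2)$; so among the relative geodesics joining these endpoints, one can select one whose parabolic components are drawn as $\Gamma$-geodesics in their parabolic subgroups, ensuring the replacement is $\Gamma$-nonincreasing. Verifying this carefully, and handling the subsequent merging of newly-adjacent parabolic components without inflating $\Gamma$-length, is where the technical work lies.
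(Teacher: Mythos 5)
There is a genuine gap: the central claim that each curve-shortening replacement $z\mapsto y$ is $\Gamma$-nonincreasing is false, and consequently the intermediate bound $l_{\Gamma}(\rho)\leq |w_u|_S$ that your whole argument rests on is stronger than what is actually true. A relative geodesic minimizes the number of edges in $\hat{\Gamma}$, not the length in $\Gamma$, and by Definition~\ref{def:paraboliccomponent} each parabolic component of $y$ must be drawn as a geodesic \emph{in $\Gamma_i$}, i.e.\ written in the letters of $S_i$ only. Consider a subword $z$ consisting of two letters of $S_0$ whose product happens to lie in some $P_i$: the relative geodesic $y$ replacing it is a single parabolic component whose $\Gamma_i$-geodesic spelling can be much longer than $2$ --- the BCP property only caps it at $C(2)$, not at $l_{\Gamma}(z)$. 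So a single replacement can multiply the $\Gamma$-length of a relative unit by up to $C(2)$, and no ``judicious choice of $y$'' avoids this, since \emph{every} admissible $y$ must spell its parabolic components in the $S_i$-letters. This is precisely why the factor $C(2)$ appears in the statement and cannot be absorbed into a trivial ``$C(2)>1$'' observation at the end.

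The paper's proof goes the other way around: it tracks the \emph{relative} length, which genuinely is nonincreasing under the procedure, so $l_{\hat{\Gamma}}(\rho)\leq l_{\hat{\Gamma}}(w_u)\leq |w_u|_S$; it then converts relative length to $\Gamma$-length by noting that each subpath of $\rho$ of relative length $1$ is either a single generator or a parabolic component of $\Gamma$-length at most $C(2)$ (the latter because the components of $\rho$ are geodesic words from $\mathcal{L}_1$, bounded via BCP for the $(2,0)$-quasi-geodesic pairs arising in the shortening). Multiplying the two bounds gives $l_{\Gamma}(\rho)\leq C(2)\,l_{\hat{\Gamma}}(\rho)\leq C(2)|w_u|_S$. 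If you want to salvage your approach, you should replace the invariant ``$\Gamma$-length is nonincreasing'' by ``relative length is nonincreasing and every parabolic component stays bounded by $C(2)$,'' which is exactly the paper's argument.
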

\begin{proof}
Clearly, the relative length of $\rho$ does not exceed the relative length of the path labelled by $w_u$. The upper bound on the $\Gamma$-length of $\rho$ could be attained if $w_u$ had no parabolic components of length greater than one, while every subword of $\rho$ of relative length 1 was a parabolic word of $\Gamma$-length $C(2)$.
\end{proof}
\begin{figure}
\setlength{\unitlength}{.6cm}
\begin{picture}(16, 10)
	%FIRST COPY
		\qbezier(2, 8)(3, 6)(8, 2)					%curve
		\put(4, 5.5){\vector(1, -1){0}}				%arrowhead
		\put(2,8){\circle*{.2}}	                          % left end bold vertex
		\put(8,2){\circle*{.2}}	                          % right end bold vertex
		\put(2.35,7.4){\circle*{.2}}	                    % bold vertex
		\put(5.8,3.8){\circle*{.2}}	                    % bold vertex
		\put(1.6, 7.4){$\eta_i$}						%labeling
		\put(2.8, 5.6){$\bar{\rho}_i$}			   	%labeling
	    \put(6.7, 2.2){$\nu_i$}						%labeling
%
	%SECOND COPY
		\qbezier(8, 2)(12, 5)(15.5, 6)				%curve
		\put(10.9, 4){\vector(1, 1){0}}				%arrowhead
		\put(11.5, 3.8){$\rho_i$}						%labeling
		\put(8.6,2.4){\circle*{.2}}	                          % bold vertex
		\put(15.5,6){\circle*{.2}}	                          % right end bold vertex
%
   %SHORTENING
       	\qbezier(5.8, 3.8)(7, 3)(8.6, 2.4)					%curve
       	\put(7.4, 2.85){\vector(1, -1){0}}				%arrowhead
        \put(7.2, 3.3){$\bar{\nu}_i$}						%labeling
\end{picture}
	\caption{Cyclic curve shortening, see Proposition~\ref{cor:Lemma431}.}\label{fig:CurveShort}
\end{figure}
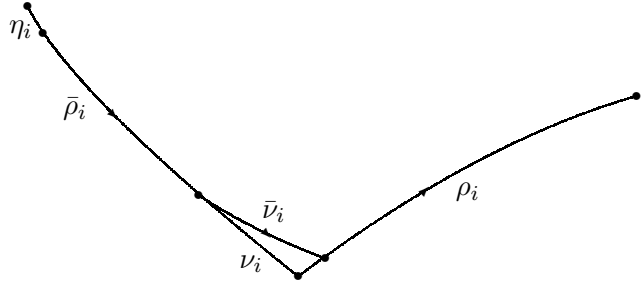
\begin{prop} \label{cor:Lemma431}  With the notation and assumptions of Lemma~\ref{lem:Farb}, there is an algorithm to compute a relative cyclic $(8\delta+1)$-local geodesic $\alpha$ such that $lab(\alpha)\in[u]_G$; the computation time is $O(\bar{L}C_w^{(par)}(\bar{L}))$. Moreover, the algorithm also finds $a\in G$ such that $lab(\alpha)=a^{-1}ua$, and the $\Gamma$-length of $a$ is bounded as follows: $|a|_{\Gamma}\leq C(2)|w_u|_{S}$.
\end{prop}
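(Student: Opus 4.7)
First I would invoke Lemma~\ref{lem:Farb} on the input $w_u$ to produce a relative $(8\delta+1)$-local geodesic $\rho$ with $\mathrm{lab}(\rho)=u$. Corollary~\ref{cor:lengthofrho} gives $|\rho|_{\Gamma}\leq C(2)|w_u|_{S}$, and this step costs $O(C_w^{(par)}(\bar L)\log \bar L)$. The path $\rho$ is locally geodesic along its interior, so the only possible failure of the cyclic local-geodesic condition is in a window crossing the ``seam'' where $\rho_{+}$ meets $\rho_{-}$ under cyclic concatenation.

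Now I would run a second phase of curve-shortening, targeted at the seam, in the spirit of Figure~\ref{fig:CurveShort}. At each iteration, factor the current path as $\rho=\eta_i\,\bar\rho_i\,\nu_i$, where $\eta_i$ is a prefix and $\nu_i$ a suffix with $|\eta_i|_{\hat\Gamma}+|\nu_i|_{\hat\Gamma}\leq 8\delta+1$, chosen maximally. If $\nu_i\eta_i$ is a relative geodesic for every such window, then $\rho\circ\rho$ is a relative $(8\delta+1)$-local geodesic and I set $\alpha:=\rho$. Otherwise, $\nu_i\eta_i$ is a relative $2$-quasi-geodesic of relative length at most $8\delta+1$, so exactly as in the proof of Lemma~\ref{lem:Farb} the closed path $\nu_i\eta_i\,y^{-1}$, with $y$ a relative geodesic joining their endpoints, lies in $\mathcal{L}_5$; looking this up and invoking the parabolic word problem on any newly-merging components yields a shorter $\bar\nu_i$ representing $\nu_i\eta_i$ in $G$. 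Since $u=\eta_i\bar\rho_i\nu_i$, conjugating by $\eta_i$ gives $\eta_i^{-1}u\eta_i=\bar\rho_i\,\nu_i\eta_i=\bar\rho_i\,\bar\nu_i$, a strictly shorter relative $(8\delta+1)$-local geodesic (after a local Farb re-shortening in an $(8\delta+1)$-neighborhood of the new seam). I update $\rho\leftarrow\bar\rho_i\,\bar\nu_i$ and accumulate $a\leftarrow a\cdot\eta_i$.

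For the bound $|a|_\Gamma\leq C(2)|w_u|_S$, I would arrange that the seam-pointer only advances cyclically: each $\eta_i$ is a prefix of the current $\rho$, and after the replacement, the new seam sits strictly further along $\rho$ than the previous one (the portion of $\rho$ lying strictly before the seam has been locally re-checked and declared cyclically geodesic on that side). Because $|\rho|_{\hat\Gamma}$ never increases during the shortening phase, and the total $\Gamma$-length of the cyclic word $\rho$ stays bounded by its initial value $C(2)|w_u|_S$, the lengths $|\eta_i|_\Gamma$ telescope along a single circuit of $\rho$, yielding $|a|_\Gamma\leq\sum_i|\eta_i|_\Gamma\leq C(2)|w_u|_S$. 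For complexity, each shortening strictly decreases $|\rho|_{\hat\Gamma}$ or merges two parabolic components (again reducing $|\rho|_{\hat\Gamma}$ by at least $1$), so there are at most $O(\bar L)$ iterations, each costing $O(C_w^{(par)}(\bar L))$ between table lookups in $\mathcal{L}_5$ and one call to the parabolic word problem; the total is $O(\bar L\,C_w^{(par)}(\bar L))$, which dominates the initial $O(C_w^{(par)}(\bar L)\log\bar L)$ from Lemma~\ref{lem:Farb}.

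The main obstacle will be the book-keeping in the previous paragraph: ensuring that a shortening in one seam-window does not force re-processing of windows already cleared, so that the $\eta_i$ really do correspond to a monotone advance around the cyclic word and the bound on $|a|_\Gamma$ follows cleanly. This is handled by confining the local Farb re-shortening to a window of radius $8\delta+1$ around the edit, and by exploiting the strict monotone decrease of $|\rho|_{\hat\Gamma}$ to rule out infinite loops; once verified, the termination and the linear bounds fall into place.
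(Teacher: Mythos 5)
Your algorithm is essentially the paper's: run Farb's linear curve-shortening to get a relative $(8\delta+1)$-local geodesic $\rho$, then repeatedly detect a failure of the local-geodesic condition in a window crossing the seam of $\rho\circ\rho$, split off a head $\eta_i$ and tail $\nu_i$ with $\ell_{\hat{\Gamma}}(\nu_i\circ\eta_i)\leq 8\delta+1$, replace $\nu_i\circ\eta_i$ by a relative geodesic $\bar{\nu}_i$ (so the new label is $\eta_i^{-1}u'\eta_i$ up to the earlier conjugations), locally re-shorten, and accumulate $a$ as the product of the $\eta_i$. The bound $C(2)$ on the parabolic components of each $\eta_i$ via isolated components of the $(2,0)$-quasi-geodesic bigon, the iteration count $s\leq\bar{L}$ from the strict decrease of $\ell_{\hat{\Gamma}}(\rho)$, and the complexity accounting all match the paper's proof. (You omit the initial free cyclic reduction $u=a'u'a'^{-1}$, but that is cosmetic.)

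The one place you genuinely diverge is the final bound on $|a|_{\Gamma}$, and that is where there is a gap. You assert that the seam pointer advances monotonically through a single circuit of the cyclic word, so that the quantities $|\eta_i|_{\Gamma}$ ``telescope'' to at most $C(2)|w_u|_S$. This is not established and is not obviously true: after a replacement the word becomes $\bar{\rho}_i\circ\bar{\nu}_i$, and the freshly inserted segment $\bar{\nu}_i$ --- as well as previously traversed material whose $(8\delta+1)$-neighbourhood has changed because of nearby edits --- can later be consumed as part of some head $\eta_j$. The monotone decrease of $\ell_{\hat{\Gamma}}(\rho)$ bounds the number of iterations by $\bar{L}$, but it does not bound $\sum_i|\eta_i|_{\hat{\Gamma}}$ by $\ell_{\hat{\Gamma}}(\rho_0)$: each iteration may consume a head of relative length up to $8\delta$ while shortening the word by only $1$, so a priori the pointer can wrap around the shrinking cyclic word on the order of $8\delta$ times. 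The paper does not attempt your sharper claim; it takes the crude product of the iteration count $s\leq\bar{L}$ with the per-step bound $|a_i|_{\Gamma}\leq 8\delta C(2)$, which is what the written argument actually delivers (namely $|a|_{\Gamma}\leq 8\delta C(2)\bar{L}$; the extra factor $8\delta$ relative to the displayed statement is a discrepancy already present in the source). Either supply a real argument that the heads cannot double-count material, or fall back on the product bound, which still gives the linear-in-$|w_u|_S$ estimate that the rest of the paper uses.
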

\begin{proof} If $u$ is not cyclically reduced, so that $u=a'u'a'^{-1}$ for some $a'\neq 1$, then we apply the free reduction; let the subpath $\rho_0$ of $\rho$ be such that $lab(\rho_0)=u'=a'^{-1}ua'$ is cyclically reduced.  To compute the relative cyclic $(8\delta+1)$-local geodesic $\alpha$ and a conjugating element $a$ such that $lab(\alpha)=a^{-1} lab(\rho) a$, we apply the procedure from Lemma~\ref{lem:Farb} to the concatenation $\rho_0\circ \rho_0$.

If the concatenation $\rho_0\circ \rho_0$ is not a relative $(8\delta+1)$-local geodesic then necessarily there is a 'tail' $\nu_0$ and a 'head' $\eta_0$ of $\rho_0$ such that the following conditions hold, see Figure~\ref{fig:CurveShort}:
\begin{enumerate}
\item $\rho_0=\eta_0 \circ\bar{\rho}_0\circ\nu_0$;
\item every proper subpath of $\nu_0 \circ\eta_0$ is a relative geodesic but $\nu_0 \circ\eta_0$ is not a relative geodesic;
\item the relative length of $\nu_0 \circ\eta_0$ does not exceed $8\delta+1$. 
\end{enumerate}
Since $\rho_0$ is a relative $(8\delta+1)$-local geodesic, both $\eta_0 $ and $\nu_0$ are nontrivial paths; in particular, the relative length of $\eta_0$ does not exceed $8\delta$.  We replace $\nu_0 \circ\eta_0$ with a relative geodesic $\bar{\nu}_0$. By an argument due to Farb, $\nu_0 \circ\eta_0$ and $\bar{\nu}_0$ form a pair of relative $(2,0)$ quasi-geodesics that do not have connected parabolic components (cf. the proof of Lemma~\ref{lem:Farb}). It follows that the length of every parabolic component of $\eta_0$ is bounded by $C(2)$. We set $a_0=lab(\eta_0)$, then $a_0\in B(8\delta,C(2))$, in particular, $|a_0|_{\Gamma}\leq 8\delta C(2)$.
 
Let $\rho'_1=\bar{\rho}_0\circ\bar{\nu}_0$. Note that $lab(\rho'_1)=a_0^{-1}lab(\rho_0) a_0=a_0^{-1}u'a_0=a_0^{-1}a'^{-1}ua'a_0$. We replace $\rho'_1$ with a relative $(8\delta+1)$-local geodesic $\rho_1$ with the same endpoints as $\rho'_1$ and continue with $\rho_1\circ\rho_1$ in a similar way. Namely, we identify $\nu_1$, and $\eta_1$ that satisfy the conditions (1)-(3) above, and then find $\bar{\nu}_1$ and $\rho_2$; note that $lab(\rho_2)=a_1^{-1}lab(\rho_1) a_1=a_1^{-1}a_0^{-1}u'a_0a_1$. After a number $s$ of iterations we obtain a relative $(8\delta+1)$-local geodesic path $\rho_s\circ\rho_s$ such that $lab(\rho_s)=a_s^{-1}\dots a_1^{-1}a_0^{-1}a'^{-1}ua'a_0a_1\dots a_s$. We set $\alpha=\rho_s$ and $a=a'a_0a_1\dots a_s$. The argument, that was applied above to the parabolic components of $\eta_0$ and the length of $a_0$, appllies to $\eta_i$ and $a_i$ for all $i$. Therefore, $|a_i|_{\Gamma}\leq 8\delta C(2),\forall i=0,1,\dots,s$. The number $s$ of iterations does not exceed $\bar{L}$ because $l_{\rho_0}\leq \bar{L}$ and $l_i\leq l'_i<l_{i-1}$, where $l_i=l_{\rho_i}$ and $l'_i=l_{\rho'_i}$, for all $i=1,2,\dots,z$. Note that $a'$ is a subword of $w_u$.
Thus, $|a|_{\Gamma}\leq 8\delta C(2)|w_u|_{S}=8\delta C(2)\bar{L}$.

To compute the time complexity, it remains to show that the complexity of the replacement of $\rho'_i$ with $\rho_i$ is bounded above by $O(C_w^{(par)}(n))$. Note that $\rho'_i$ is the concatenation of two relative $(8\delta+1)$-local geodesics. By Lemma~\ref{lem:localgeodnobacktracking}, relative $(8\delta+1)$-local geodesics do not backtrack. Therefore, in the curve shortening procedure, we may only need to replace each parabolic component of $\rho'_i$ with shorter parabolic components a globally bounded number of times. The total length $n_1+n_2+\dots+n_t$ of the parabolic components of $\rho'_i$ does not exceed $\bar{L}$, so that we have
\[
\Sigma_{i=1}^t O(C_w^{(par)}(n_i))\leq O(C_w^{(par)}(n_1+n_2+\dots+n_t))\leq O(C_w^{(par)}(\bar{L})).
\]
\end{proof}

\begin{thm}\label{thm:parabolic} Let $G$ be a group, hyperbolic relative to the set of subgroups $\mathcal{P}=\{P_1,\dots,P_m\}$. Let $S=\cup_{i=0}^{m} S_i$ be a finite generating set for $G$ such that $P_i=\left\langle S_i\right\rangle $ for $i=1,2,\dots,m$. Let $F(S)$ be the free group on $S$.  There is an algorithm that, given a relative Dehn presentation for $G$, solution to the word problem in the parabolic subgroups and a word $w_u\in F(S)$, decides whether or not the element $u\in G$, defined by the word $w_u$, is a hyperbolic or a parabolic element of $G$. Moreover, if $u$ is parabolic then the algorithm finds $q\in F(S_1)\cup\dots\cup F(S_m)$ conjugate to $u$.

The time complexity of the algorithm is $O(\bar{L}C_w^{(par)}(\bar{L}))$, where $\bar{L}=|w_u|_S$.
\end{thm}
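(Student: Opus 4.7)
The plan is to invoke Proposition~\ref{cor:Lemma431} to compute a cyclic $(8\delta+1)$-local geodesic $\alpha$ with $lab(\alpha) = a^{-1}ua\in [u]_G$, and then decide parabolicity by inspecting whether $\alpha$ consists of a single parabolic component.

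\textbf{Algorithm and complexity.} On input $w_u$, apply Proposition~\ref{cor:Lemma431} in time $O(\bar L\, C_w^{(par)}(\bar L))$ to obtain the cyclic $(8\delta+1)$-local geodesic $\alpha$ and an associated conjugator $a\in G$ with $lab(\alpha) = a^{-1}ua$. In a single linear pass over $lab(\alpha)$, taking $O(\bar L)$ time, check whether all letters of $lab(\alpha)$ lie in a common $F(S_i)$ for some $i\in\{1,\dots,m\}$. If so, declare $u$ parabolic and output $q := lab(\alpha)$; otherwise declare $u$ hyperbolic. The total running time is $O(\bar L\, C_w^{(par)}(\bar L))$, matching the target.

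\textbf{Correctness and main obstacle.} If $lab(\alpha)\in F(S_i)$, then $u = aqa^{-1}$ is $G$-conjugate to $q\in P_i$, so $u$ is parabolic and $q$ is the required conjugate. For the converse I would argue contrapositively: if $\alpha$ has relative length $l_\alpha\geq 2$ (hence is not a single parabolic component), then $u$ is hyperbolic. Assume for contradiction that $u' = lab(\alpha) = hph^{-1}$ is parabolic with $p\in P_i$ and $h\in G$. All powers $u'^n = hp^n h^{-1}$ lie in the translate $hP_ih^{-1}$, yielding the uniform bound $|u'^n|_{\hat\Gamma}\leq 2|h|_{\hat\Gamma}+1$ independent of $n$. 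On the other hand, when $l_\alpha\geq 8\delta+1$, an inductive shift-and-translate argument shows that $\alpha^n$ is a $(8\delta+1)$-local geodesic for every $n\geq 1$: any subpath of length at most $8\delta+1$ of $\alpha^n$ fits, after translation by a power of $u'$, inside a single $\alpha\circ\alpha$, which is a local geodesic by hypothesis. Theorem~\ref{thm:klocalgeodesics}(3) then gives that $\alpha^n$ is a $(\lambda, 2\delta)$-quasi-geodesic, forcing $|u'^n|_{\hat\Gamma}\geq (n l_\alpha - 2\delta)/\lambda\to\infty$ and contradicting the uniform bound. The main obstacle is the residual range $2\leq l_\alpha\leq 8\delta$, where the shift-and-translate argument breaks down. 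I plan to close this gap via Lemma~\ref{lem:localgeodnobacktracking} together with the BCP property (Definition~\ref{def:BCP}): the paths $\alpha$ and the parabolic representation $h\cdot p\cdot h^{-1}$ form a pair of relative quasi-geodesics with common endpoints from $1$ to $u'$, and BCP constrains their isolated parabolic components; combined with the no-backtracking of $\alpha$, this forces the short $\alpha$ to lie inside a single coset, contradicting the assumption that $\alpha$ is not a single parabolic component.
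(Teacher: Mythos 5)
Your algorithm is incomplete in a way that makes it return wrong answers: you declare $u$ hyperbolic whenever the cyclic $(8\delta+1)$-local geodesic $\alpha$ is not a single parabolic component, but this implication is false in the residual range you yourself identify. A cyclic $(8\delta+1)$-local geodesic is only guaranteed to be a relative quasi-geodesic when traversed once or twice (Theorem~\ref{thm:klocalgeodesics} applies to $\alpha$ and to $\alpha\circ\alpha$, not to $\alpha^n$ for all $n$ when $l_\alpha\le 8\delta$), so a parabolic element can perfectly well admit a cyclic local geodesic representative of relative length between $2$ and roughly $86\delta+3$ that is written as a hyperbolic word --- this is exactly the second type of ``not obviously parabolic'' element the paper flags after Definition~\refd{parabolicelt}. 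Your proposed patch for this range does not work: the BCP property only bounds the $\Gamma$-length of \emph{isolated} parabolic components of the two quasi-geodesics from $1$ to $u'$; it does not force the short $\alpha$ into a single coset, and Lemma~\ref{lem:localgeodnobacktracking} only rules out backtracking of $\alpha$ itself, not the existence of a conjugator $h$ of relative length up to $O(\delta)$ carrying $u'$ into a parabolic subgroup. So your algorithm would misclassify precisely those parabolic elements, and would also fail to produce the required witness $q$ for them.

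The paper's proof closes this gap with a genuinely different mechanism. It shows (via Theorem~\ref{thm:conjugatorforlongelts}) that the problematic case only occurs when $\ell_\alpha\le 86\delta+3$, and then, using Lemma~\ref{lem:veryshortg}, Lemma~\ref{lem:isolated} and Lemma~\ref{lem:noselfintersection}(2), proves that if such a short $u$ \emph{is} parabolic then a relative geodesic $\gamma_u$ for it has all parabolic components of $\Gamma$-length at most $2C(3)$ and is conjugate to some $q$ with $|q|_\Gamma\le C(3)$; hence $\gamma_u$ lies in the finite, input-independent, precomputed list $\mathcal{L}_{10}$. The decision in the short case is then a membership test against $\mathcal{L}_{10}$ using the word problem in $G$, which both settles parabolicity and supplies the witness $q$. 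Your power-growth argument is a clean (and correct) way to handle $l_\alpha\ge 8\delta+1$, but some precomputed finite data of this kind (or an equivalent bounded search justified by the Section~3 estimates) is unavoidable for the short case; without it the theorem as stated is not proved.
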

\begin{proof} By Proposition~\ref{cor:Lemma431}, we can assume that $u$ is represented by a relative cyclic $(8\delta+1)$-local geodesic path $\alpha$; let $\bar{w}_u=lab(\alpha)$. If $\bar{w}_u$ is a word in $S_i$ for some $i$ then clearly, $u$ is parabolic in $G$. If $\bar{w}_u\notin F(S_i)$ for all $i$, then $u$ is parabolic only if it is conjugate to an element $q$ of one of the subgroups from $\mathcal{P}$. By Theorem~\ref{thm:conjugatorforlongelts}, $u$ is parabolic only if $\ell_{\alpha}\leq 86\delta+3$. So, assume that $\alpha$ is relatively short and does not consist of a single parabolic component. Let $\gamma_u$ be a relative geodesic joining the endpoints of $\alpha$, then by Lemma~\ref{lem:localgeodnobacktracking}, $\gamma_u$ cannot consist of a single parabolic component. Clearly, $|\gamma_u|_{\hat{\Gamma}}\leq 86\delta+3$. Consider the geodesic quadrilateral $Q_g$. By Lemma~\ref{lem:veryshortg}, if in $Q_g$ a parabolic component of $\gamma_u$ is connected to $q$ then necessarily $u=q$ in $G$. In this case, since $\gamma_u$ is a relative geodesic representing $u$, it consists of a single parabolic component, which is a contradiction. Therefore, no parabolic component of $\gamma_u$ is connected to $q$. Hence, we can apply Lemma~\ref{lem:isolated} and Lemma~\ref{lem:noselfintersection}(2) to show that $|q|_{\Gamma}\leq C(3)$, and the $\Gamma$-length of every parabolic component of a shortest conjugating element $g$ is bounded by $C(3)$. We conclude that the $\Gamma$-length of every parabolic component of $\gamma_u$ is bounded by $2C(3)$, so that $\gamma_u\in\mathcal{L}_{8}$. 

Thus, there is an algorithm to decide whether or not $w_u$ represents a parabolic element, as follows: 
\begin{enumerate}
\item Compute a relative cyclic $(8\delta+1)$-local geodesic path $\alpha$, using Lemma~\ref{lem:Farb} and Proposition~\ref{cor:Lemma431}; this takes $O(\bar{L}C_w^{(par)}(\bar{L}))$ steps. If $\ell_{\alpha}>86\delta+3$ then $u$ is not a parabolic element. 
\item If $\ell_{\alpha}\leq 86\delta+3$ but $\alpha$ does not consist of a single parabolic component then check whether there is $w\in \mathcal{L}_{10}$ such that $\bar{w}_u=w$. The given element $u$ is parabolic if and only if such $w$ exists. There are finitely many equalities to check, so that the complexity of this part of the algorithm is $O(C_w^{(par)}(\bar{L})\log \bar{L})$, the same as the complexity of the word problem in $G$.
\end{enumerate}
The complexity of the algorithm is $O(\bar{L}C_w^{(par)}(\bar{L}))$, because the two parts of it apply one after another.
\end{proof}

% % % % % % % % % % % %
\subsection{Conjugacy problem} 
\begin{convention} \label{convention51}
In this section, the following data is considered input of our algorithms.
\begin{enumerate}
\item[(i)] A relative Dehn presentation $\left\langle S_0,P_1,\dots,P_m \mid \mathcal{R} \right\rangle $ of a finitely generated relatively hyperbolic group $G$, along with finite generating sets $S_i$ for parabolic subgroups: $P_i=\left\langle S_i\right\rangle $ for $i=1,2,\dots,m$. We denote by $S$ the finite generating set $S=\cup_{i=0}^{m} S_i$ for $G$;
\item[(ii)] Solution(s) to the word problem in the parabolic subgroups; let $C_w^{(par)}(n)$ denote the (maximum) complexity of these procedures;
\item[(iii)] Two words $w_u, w_v\in F(S)$, where $F(S)$ is the free group on $S$. The maximum length $\bar{L}=\max\{|w_u|_S,|w_v|_S \}$ of these words is considered the length of the input. We denote by $u$ and $v$ the elements of $G$, defined by the words $w_u$ and $w_v$, respectively.
\item[(iv)] We assume that relative cyclic $(8\delta+1)$-local geodesic paths $\alpha$ and $\beta$ with $\bar{w}_u=lab(\alpha)$ and $\bar{w}_v=lab(\beta)$, such that $\bar{w}_u\in [u]_G$ and $\bar{w}_v\in [v]_G$, have already been computed; recall that this computation takes $O(\bar{L}C_w^{(par)}(\bar{L}))$, according to Proposition~\ref{cor:Lemma431}. We have the following inequality:  $L=\max\{l_{\alpha},l_{\beta} \}\leq \bar{L}$.
\end{enumerate}
\end{convention}

In the following theorem we describe an efficient algorithm to solve the conjugacy and the conjugacy search problem for ``long'' elements. Notably, solution to the conjugacy (search) problem in parabolic subgroups is not needed in this case. 
\begin{thm} \label{thm:alglongelts} There is an algorithm which takes as input all of the data listed in Convention~\ref{convention51}, checks the value of $L$ and proceeds as follows. If $L<86\delta+3$ then it stops. If $L\geq 86\delta+3$ then the algorithm decides whether or not $u$ and $v$ are conjugate in $G$. Moreover, if $u$ and $v$ are conjugate then a conjugating element for $\bar{w}_u$ and $\bar{w}_v$ will be found. The time complexity of the algorithm is $O(\bar{L}^2 C_w^{(par)}(\bar{L}) \log \bar{L})$.
\end{thm}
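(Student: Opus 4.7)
Since $L\geq 86\delta+3$, Theorem~\ref{thm:conjugatorforlongelts} applies: if $u$ and $v$ are conjugate in $G$ then both are hyperbolic, and there exist a cyclic permutation $u'=x^{-1}\bar{w}_ux$ of $\bar{w}_u$, a cyclic permutation $v'=y^{-1}\bar{w}_vy$ of $\bar{w}_v$, and an element $g\in B(7\delta+1,C(7,2\delta))=\mathcal{L}_4$ with $gu'g^{-1}=v'$ in $G$. The list $\mathcal{L}_4$ was precomputed in Section~\ref{sec:Preliminary} and its cardinality depends only on $\delta$, so it is a constant independent of $\bar{L}$. The plan is to perform an exhaustive search over all such triples, reducing each check to a word-problem instance in $G$.

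First I would enumerate the cyclic permutations of $\bar{w}_u$ and of $\bar{w}_v$. By Corollary~\ref{cor:lengthofrho}, $|\bar{w}_u|_S\leq C(2)\bar{L}$ and $|\bar{w}_v|_S\leq C(2)\bar{L}$, so each of these lists contains $O(\bar{L})$ entries. Next I would iterate over all triples $(u',v',g)$ in the product of these two lists with $\mathcal{L}_4$, and for each triple test the equality $gu'g^{-1}(v')^{-1}=1$ in $G$ using Farb's algorithm from Theorem~\ref{thm:Farb}. As soon as some triple succeeds, I record the prefixes $x,y$ that defined $u'$ and $v'$ and return $h:=ygx^{-1}$; then
\[
h\bar{w}_uh^{-1}=ygx^{-1}\bar{w}_uxg^{-1}y^{-1}=ygu'g^{-1}y^{-1}=yv'y^{-1}=\bar{w}_v,
\]
so $h$ is a conjugator for $\bar{w}_u$ and $\bar{w}_v$, as required. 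If the loop terminates without success then Theorem~\ref{thm:conjugatorforlongelts} guarantees that $u$ and $v$ are not conjugate in $G$, and the algorithm reports so.

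For the complexity, the number of triples is $O(\bar{L}^2)\cdot|\mathcal{L}_4|=O(\bar{L}^2)$. The word $gu'g^{-1}(v')^{-1}$ has $S$-length $O(\bar{L})$, because $g$ has constant $S$-length while $u',v'$ have $S$-length $O(\bar{L})$; so by Theorem~\ref{thm:Farb} each word-problem test costs $O(C_w^{(par)}(\bar{L})\log\bar{L})$. Multiplying yields the asserted total of $O(\bar{L}^2\,C_w^{(par)}(\bar{L})\log\bar{L})$.

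The main point requiring careful justification is that the enumeration in Step~1 captures every cyclic conjugate to which Theorem~\ref{thm:conjugatorforlongelts} refers. This is straightforward: since $\alpha$ and $\beta$ are cyclic $(8\delta+1)$-local geodesics, any shift of $\bar{w}_u$ (respectively $\bar{w}_v$) obtained by cutting $\alpha$ (respectively $\beta$) at one of its vertices is again a cyclic $(8\delta+1)$-local geodesic of the same relative length representing a cyclic conjugate of $u$ (resp.\ $v$). Hence the hypothesis of Theorem~\ref{thm:conjugatorforlongelts} applies to every such cyclic permutation, and the short conjugator guaranteed by the theorem will be detected when the corresponding triple $(u',v',g)$ is tested.
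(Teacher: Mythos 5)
Your proposal is correct and follows essentially the same route as the paper: invoke Theorem~\ref{thm:conjugatorforlongelts} to restrict the search to the precomputed constant-size set $\mathcal{L}_4$, exhaust over all pairs of cyclic permutations (an $O(\bar{L}^2)$ family, which you bound via Corollary~\ref{cor:lengthofrho} where the paper counts relative shifts of $\alpha$ and $\beta$), and reduce each test to a word-problem instance of length $O(\bar{L})$ costing $O(C_w^{(par)}(\bar{L})\log\bar{L})$. Your explicit reconstruction of the conjugator $h=ygx^{-1}$ for $\bar{w}_u$ and $\bar{w}_v$ is a detail the paper leaves implicit, but it is not a different argument.
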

\begin{proof} If $\bar{w}_u$ and $\bar{w}_v$ are conjugate in $G$ then by Theorem~\ref{thm:conjugatorforlongelts}, there is a conjugating element $g\in\mathcal{L}_4$. The cardinality of $\mathcal{L}_4$ is a constant that does not depend on $u$ and $v$. Try every element $x\in\mathcal{L}_4$, with all the cyclic permutations of $\alpha$ and $\beta$. For each $x$ there are at most $L^2$ products $x\tilde{u}x^{-1}\tilde{v}^{-1}$, where $\tilde{u}=lab(\tilde{\alpha})$ and $\tilde{v}=lab(\tilde{\beta})$ are the labels of cyclic permutations of $\alpha$ and $\beta$, correspondingly. Furtermore, it takes $O(C_w^{(par)}(L)\log L)$ steps to decide whether or not $x\tilde{u}x^{-1}\tilde{v}^{-1}=1$. Since $L\leq\bar{L}$, the claim follows.
\end{proof}
% % 
The case when $u$ and $v$ are ``short'', which means that $L<86\delta+3$, is somewhat different. Note that we only assume that the relative length of $u$ and $v$ is bounded, while their $\Gamma$-length cannot be bounded in general. However, one can overcome this obstacle, as the following lemma shows.
\begin{lem}\label{lem:replacebygeodesics}
Assume that $L<86\delta+3$ and that $u$ and $v$ are hyperbolic elements of $G$. If $u$ and $v$ are conjugate in $G$ then one of the following holds.
\begin{enumerate}
\item There is a conjugating element $g\in\mathcal{L}_2$ for cyclic permutations of $\bar{w}_u$ and $\bar{w}_v$.
\item There are relative geodesics $\gamma_u$ and $\gamma_v$ with the same endpoints as $\alpha$ and $\beta$, correspondingly, such that $lab(\gamma_u),lab(\gamma_v)\in\mathcal{L}_8$.
\end{enumerate}
\end{lem}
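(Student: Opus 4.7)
The plan is to study the geodesic quadrilateral $Q_g=\sigma\circ\gamma_u\circ\tau^{-1}\circ\gamma_v^{-1}$ (Section~\ref{sec:notation}) corresponding to a shortest conjugating element $g$ for cyclic permutations of $\bar{w}_u$ and $\bar{w}_v$, with $\sigma,\tau$ relative geodesics labeled by $g$ and $\gamma_u,\gamma_v$ relative geodesics. After relabeling the cyclic conjugates we may take $\gamma_u$ and $\gamma_v$ to share endpoints with $\alpha$ and $\beta$ respectively. I then distinguish cases according to whether $\gamma_u$ and $\gamma_v$ have a pair of connected parabolic components in $Q_g$.

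In the positive case (or when $\bar{w}_u=\bar{w}_v$, where $g=1$ works trivially), $u$ and $v$ are hyperbolic, so $lab(\gamma_u)$ and $lab(\gamma_v)$ are hyperbolic words, and Corollary~\ref{cor:veryshortg} supplies a conjugating element $h\in P_i$ with $|h|_\Gamma\leq C(3)\leq C(7,2\delta)$, that is, $h\in\mathcal{L}_2$. This gives conclusion~(1).

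In the negative case, I claim $lab(\gamma_u)$ and $lab(\gamma_v)$ already lie in $\mathcal{L}_8$, which is conclusion~(2). The relative-length bound $|\gamma_u|_{\hat\Gamma},|\gamma_v|_{\hat\Gamma}\leq L\leq 86\delta+3$ is automatic, so the only task is to bound every parabolic component of $\gamma_u$ (and symmetrically of $\gamma_v$) in $\Gamma$-length by $2C(3)$. By Lemma~\ref{lem:parabcompsofg}(1), invoked with the hyperbolic element $u$, I may assume that $\gamma_u$ and $\sigma$ have no connected parabolic components in $Q_g$; combined with the case assumption, every parabolic component $p$ of $\gamma_u$ is then either isolated in $Q_g$ or connected to a parabolic component of $\tau$.

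Lemma~\ref{lem:noselfintersection}(2) decomposes $Q_g$ into the pair of $(3,0)$-quasi-geodesics $\sigma\circ\gamma_u$ and $\gamma_v\circ\tau$ with common endpoints $A_0,A_2$, to which the BCP property (Definition~\ref{def:BCP}) applies: every isolated component has $\Gamma$-length at most $C(3)$. For a non-isolated $p$ connected to some component $q$ of $\tau$, the argument proving Lemma~\ref{lem:parabcompsofg}(1) forces $p$ and $q$ to be the terminal components of $\gamma_u$ and $\tau$, both incident to the corner $A_2$. Cyclically conjugating $u$ by $p$ replaces $(u,g)$ with $(pup^{-1},gp^{-1})$; minimality of $g$ prevents any genuine shortening, so the residual parabolic segment $pq^{-1}\in P_i$ appears as an isolated component of a modified $(3,0)$-quasi-geodesic bigon, whence BCP gives $|pq^{-1}|_\Gamma\leq C(3)$. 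Since $q$ is a parabolic component of $g$ and $u,v$ are hyperbolic, Lemma~\ref{lem:isolated}(1) yields $|q|_\Gamma\leq C(3)$; the triangle inequality then gives $|p|_\Gamma\leq |pq^{-1}|_\Gamma+|q|_\Gamma\leq 2C(3)$, as required. The main obstacle lies in this corner-absorption step: one must verify that the cyclic shift by $p$ produces a valid new quadrilateral whose sides still satisfy the $(3,0)$-quasi-geodesic hypothesis of BCP, and that $pq^{-1}$ survives as a single isolated parabolic component so that BCP genuinely bounds its $\Gamma$-length by $C(3)$.
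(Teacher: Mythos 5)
Your proof is correct and follows essentially the same route as the paper's: the same dichotomy on whether $\gamma_u$ and $\gamma_v$ have connected parabolic components in $Q_g$, with Lemma~\ref{lem:veryshortg} (you use its Corollary~\ref{cor:veryshortg}, which only sharpens the bound from $C(7,2\delta)$ to $C(3)$) handling the connected case, and Lemma~\ref{lem:isolated}(1) together with Lemma~\ref{lem:noselfintersection}(2) and the BCP property giving the $2C(3)$ bound on the parabolic components of $\gamma_u,\gamma_v$ in the other case. The ``corner-absorption'' step you flag as the main obstacle is exactly the step the paper leaves implicit in its ``we conclude'' sentence, and your treatment of it (merging the adjacent components at $A_2$ into $pq^{-1}$ and applying BCP plus the triangle inequality) is the intended argument.
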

\begin{proof} Let $\gamma_u$ (or $\gamma_v$) be a relative geodesic connecting the endpoints of $\alpha$ (or $\beta$). Clearly, 
$$\max \{|\gamma_u|_{\hat{\Gamma}},|\gamma_v|_{\hat{\Gamma}}\}\leq L< 86\delta+3.$$ 
By assumption, $\bar{w}_u$ and $\bar{w}_u$ are conjugate, so we consider the corresponding geodesic quadrilateral $Q_g$. If in $Q_g$ parabolic components of $\gamma_u$ and $\gamma_v$ are connected then it follows from Lemma~\ref{lem:veryshortg} that cyclic permutations of $\bar{w}_u$ and $\bar{w}_u$ are conjugate by some $g\in \mathcal{L}_2$. If no parabolic components of $\gamma_u$ and $\gamma_v$ are connected then it follows from Lemma~\ref{lem:isolated}(1) and Lemma~\ref{lem:noselfintersection}(2) that the $\Gamma$-length of every parabolic component of $g$ is bounded by $C(3)$. We conclude that the $\Gamma$-length of every parabolic component of $\gamma_u$ and of $\gamma_v$ is bounded by $2C(3)$, hence $\gamma_u,\gamma_v\in\mathcal{L}_8$.
\end{proof}

\begin{thm} \label{thm:algorithm} There is an algorithm that takes as input all of the data listed in Convention~\ref{convention51}, decides for each one of $u$ and $v$ whether the element is hyperbolic or  parabolic, and then does the following. 
\begin{enumerate}
\item If one of $u$ and $v$ is hyperbolic and the other one is parabolic in $G$ then the algorithm stops. Clearly, $u$ and $v$ are not conjugate. The time complexity of the procedure in this case is $O(\bar{L}C^{(par)}_w(\bar{L}))$.
\item If both $u$ and $v$ are hyperbolic then the algorithm decides, whether or not $u$ and $v$ are conjugate. Moreover, if $u$ and $v$ are conjugate then the algorithm finds a conjugating element for some representatives of the conjugacy classes $[u]_G$ and $[v]_G$. The time complexity of the procedure in this case is $O(\bar{L}^2C^{(par)}_w(\bar{L})\log \bar{L})$.
\item If both $u$ and $v$ are parabolic then the algorithm needs also solution to the conjugacy problem for each one of the parabolic subgroups of $G$ to decide, whether or not $u$ and $v$ are conjugate. If $O(C^{(par)}_c(n))$ is the (maximum) time complexity of solution to the conjugacy problem in a parabolic subgroup of $G$ then the time complexity of this procedure is $\max\{O(C^{(par)}_c(\bar{L})),O(\bar{L}C^{(par)}_w(\bar{L}))\}$, 
\end{enumerate}
\end{thm}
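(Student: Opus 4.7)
The plan is to dispatch on the hyperbolic/parabolic type of $u$ and $v$ and reduce each of the three cases to an already-established subroutine. The very first step is to apply Theorem~\ref{thm:parabolic} separately to $u$ and to $v$. This classifies each element in time $O(\bar{L}C_w^{(par)}(\bar{L}))$ and, in the parabolic case, returns a word $q_u\in F(S_{i_u})$ (respectively $q_v\in F(S_{i_v})$) that is $G$-conjugate to $u$ (resp.\ $v$). Since conjugation preserves the hyperbolic/parabolic dichotomy, if exactly one of $u$ and $v$ is parabolic then they are non-conjugate and the algorithm halts, which yields assertion (1) and matches the claimed cost $O(\bar{L}C_w^{(par)}(\bar{L}))$.

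For (2), I assume both $u$ and $v$ are hyperbolic and work with the cyclic relative $(8\delta+1)$-local geodesics $\alpha,\beta$ provided by Convention~\ref{convention51}(iv). If $L=\max\{l_\alpha,l_\beta\}\geq 86\delta+3$, I invoke Theorem~\ref{thm:alglongelts}, whose stated complexity already matches. If $L<86\delta+3$, I appeal to Lemma~\ref{lem:replacebygeodesics}: whenever $u\sim v$, either some conjugator for a pair of cyclic permutations of $(\bar{w}_u,\bar{w}_v)$ lies in the constant-size list $\mathcal{L}_2$, or the corresponding relative geodesics $\gamma_u,\gamma_v$ have labels in the constant-size list $\mathcal{L}_8$. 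Accordingly, I iterate over all $g\in\mathcal{L}_2$ and all pairs of cyclic permutations $(\tilde{u},\tilde{v})$ of $(\bar{w}_u,\bar{w}_v)$, testing $g\tilde{u}g^{-1}\tilde{v}^{-1}=1$ via Farb's word problem algorithm (Theorem~\ref{thm:Farb}); if nothing is found, I scan $\mathcal{L}_8$ for words $w_1,w_2$ equal in $G$ to some cyclic permutation of $\bar{w}_u,\bar{w}_v$ respectively, and declare conjugacy iff $(w_1,w_2)\in\mathcal{L}_{88}$, which is a precomputed lookup. The lists $\mathcal{L}_2,\mathcal{L}_8,\mathcal{L}_{88}$ have constant cardinality, there are $O(\bar{L}^2)$ candidate equalities, and each test costs $O(C_w^{(par)}(\bar{L})\log\bar{L})$, so the total fits in $O(\bar{L}^2C_w^{(par)}(\bar{L})\log\bar{L})$.

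For (3), with both $u,v$ parabolic, I start from the representatives $q_u,q_v$ produced above. If $i_u=i_v$, I first apply the given solution to the conjugacy problem in $P_{i_u}$ to the pair $(q_u,q_v)$, at cost $O(C_c^{(par)}(\bar{L}))$; a positive answer already gives $G$-conjugacy. Otherwise Lemma~\ref{lem:conjugateparabolic} reduces the question to the existence of short parabolic conjugates: $u$ and $v$ are conjugate in $G$ iff there exist $p_u\in[q_u]_{P_{i_u}}\cap B_{i_u}$ and $p_v\in[q_v]_{P_{i_v}}\cap B_{i_v}$ with $(p_u,p_v)$ in the precomputed list $\mathcal{L}_{11}$ from Proposition~\ref{parabolicconjclasses}. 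Since $B_{i_u},B_{i_v}$ have constant cardinality, I locate such $p_u,p_v$, or rule them out, by running the given parabolic conjugacy decision on each of a constant number of candidates, at total cost $O(C_c^{(par)}(\bar{L}))$; the subsequent membership test in $\mathcal{L}_{11}$ is a constant-time lookup. Combined with the $O(\bar{L}C_w^{(par)}(\bar{L}))$ preprocessing, the complexity is $\max\{O(C_c^{(par)}(\bar{L})),O(\bar{L}C_w^{(par)}(\bar{L}))\}$, as claimed.

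The main obstacle is careful bookkeeping rather than a new idea. I need to check that none of the precomputed lists $\mathcal{L}_2,\mathcal{L}_8,\mathcal{L}_{11},\mathcal{L}_{88}$ introduces a hidden dependence on $u$ or $v$, and that in the short hyperbolic subcase of (2) the search over cyclic permutations is counted correctly: the words $\bar{w}_u,\bar{w}_v$ may have $\Gamma$-length $\Theta(\bar{L})$ even though their relative length is bounded, so the two families of cyclic rotations together yield $\Theta(\bar{L}^2)$ candidate equations, each tested in $O(C_w^{(par)}(\bar{L})\log\bar{L})$, which is exactly the target bound. All the structural ingredients---Lemma~\ref{lem:replacebygeodesics}, Lemma~\ref{lem:conjugateparabolic}, Theorem~\ref{thm:parabolic}, Theorem~\ref{thm:alglongelts}, and the preprocessing of Section~\ref{sec:Preliminary}---are already in place, so the remaining work is purely to assemble them and bound the total cost as above.
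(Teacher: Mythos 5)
Your proposal is correct and follows essentially the same route as the paper's proof: classify $u$ and $v$ via Theorem~\ref{thm:parabolic}, split the hyperbolic case into $L\geq 86\delta+3$ (Theorem~\ref{thm:alglongelts}) versus the short case handled by Lemma~\ref{lem:replacebygeodesics} with the precomputed lists $\mathcal{L}_2,\mathcal{L}_8,\mathcal{L}_{88}$, and reduce the parabolic case via Lemma~\ref{lem:conjugateparabolic} and Proposition~\ref{parabolicconjclasses} to a constant number of parabolic conjugacy queries plus a lookup in $\mathcal{L}_{11}$. The complexity accounting also matches the paper's.
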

\begin{proof} 
By Theorem~\ref{thm:parabolic}, we can decide whether each one of $u$ and $v$ is a hyperbolic or a parabolic element. If the results differ then $u$ and $v$ are not conjugate in $G$, and we have (1). Otherwise, we proceed as follows.

(2) If $L\geq 86\delta+3$ then we apply the algorithm from Theorem~\ref{thm:alglongelts}. If $L< 86\delta+3$ and $\bar{w}_u$ and $\bar{w}_v$ are not conjugate by $g\in \mathcal{L}_2$, then by Lemma~\ref{lem:replacebygeodesics}, for $\bar{w}_u$ and $\bar{w}_v$ be conjugate, the following conditions have to hold. There exist two relative geodesics, $\gamma_u$ and $\gamma_v$, such that their labels $\bar{z}_u=lab(\gamma_u)$ and $\bar{z}_v=lab(\gamma_v)$ are in $\mathcal{L}_8$, and we have that $\bar{z}_u=\bar{w}_u$ and $\bar{z}_v=\bar{w}_v$ in $G$. Moreover, the pair $(\bar{z}_u,\bar{z}_v)$ has to be in $\mathcal{L}_{88}$; if this is the case then there is a conjugating element for $\bar{z}_u$ and $\bar{z}_v$ in $\mathcal{L}_{12}$. If at least one of these conditions fails then $u$ and $v$ are not conjugate in $G$.

One checks in time $O(\bar{L}^2C^{(par)}_w(\bar{L})\log \bar{L})$ whether or not $\bar{w}_u$ and $\bar{w}_v$ are conjugate by some $g\in \mathcal{L}_2$, the complexity is the same as that from Theorem~\ref{thm:alglongelts}. If there is no conjugating element $g\in\mathcal{L}_2$, that is, the case of Lemma~\ref{lem:replacebygeodesics}(2) occurs, then  one only needs to check finitely many equalities, using the results of preliminary computations.

(3) By Theorem~\ref{thm:parabolic}, we have $q_u,q_v\in F(S_1)\cup F(S_2)\cup\dots\cup F(S_m)$ such that $q_u\in[u]_G$ and $q_v\in[v]_G$; possibly, $q_u=\bar{w}_u$ or $q_v=\bar{w}_v$. If $q_u,q_v\in F(S_i)$ for some $i$ then we use the solution to the conjugacy problem in $P_i$ to determine whether they are conjugate in $P_i$. If $[q_u]_{P_i}=[q_v]_{P_i}$ then $u$ and $v$ are conjugate in $G$. If $q_u$ and $q_v$ are not conjugate in $P_i$ then it is still possible that they are conjugate by an element in $G\setminus P_i$. We proceed as follows. Let $q_u\in P_i$ and $q_v\in P_j$ (possibly, $i=j$). We try to construct a semi-parabolic geodesic quadrilateral $Q_g$ with $q_u$ and $q_v$ as its top and bottom horizontal sides. By Lemma~\ref{lem:conjugateparabolic}, if such $Q_g$ exists then $[q_u]_{P_i}\cap\mathcal{L}_3\neq\emptyset$ and $[q_v]_{P_j}\cap\mathcal{L}_3\neq\emptyset$; otherwise, $u$ and $v$ are not conjugate in $G$. So, suppose that there are $p_u\in [q_u]_{P_i}\cap\mathcal{L}_3$ and $p_v\in [q_v]_{P_j}\cap\mathcal{L}_3$. By Proposition~\ref{parabolicconjclasses}, $u$ and $v$ are conjugate if and only if $(p_u,p_v)\in \mathcal{L}_{11}$. Note that conjugating elements for the pairs in $\mathcal{L}_{11}$ are collected in $\mathcal{L}_{7}$.

Using the solution to conjugacy problem in the parabolic subgroups $\# B_i$ and $\# B_j$ times (recall that $B_i=P_i\cap \mathcal{L}_3,\forall i$), it will take $O(C^{(par)}_c(\bar{L}))$ steps to find $p_u\in [q_u]_{P_i}\cap\mathcal{L}_3$ and $p_v\in [q_v]_{P_j}\cap\mathcal{L}_3$, or to make sure that at least one of the intersections is empty. The complexity of the algorithm from Theorem~\ref{thm:parabolic} is $O(\bar{L}C^{(par)}_w(\bar{L}))$. The other  procedures in this case have either the same or lower complexity.
\end{proof}

\subsection{Conjugacy Search Problem} 
Recall that in a countable group with solvable word problem, the conjugacy search problem is always solvable: enumerate all the elements of $G$ and substitute them one after another into the equation $xux^{-1}=v$. Since a solution exists, it will be found. This is why in the Theorem~\ref{thm:Search} below we do not assume that solution to the conjugacy search problem in the parabolic subgroups of $G$ is given. However, the estimate for the time complexity of our algorithm refers to the time complexity of the conjugacy search problem in parabolic subgroups. Indeed, in some cases a better algorithm for the parabolic subgroups may exist. For instance, if a parabolic subgroup $P$ is abelian then the conjugacy search problem in it can be solved instantly, because $[p]_P=[q]_P\Leftrightarrow p=q$ and any $g\in P$ is a conjugating element.  We denote the (best possible) ``parabolic'' complexity by  $O(C^{(par)}_{search}(n))$. 
On the other hand, our algorithm for the conjugacy search problem in $G$ uses solution to the conjugacy problem in parabolic subgroups if the given elements are parabolic, see Theorem~\ref{thm:Search}(2).
% % % % % %
\begin{thm} \label{thm:Search} There is an algorithm which takes as input all of the data listed in Convention~\ref{convention51} and the information that $u$ and $v$ are conjugate in $G$, decides whether $u$ and $v$ are hyperbolic or parabolic, and then does the following. 
\begin{enumerate}
\item If $u$ and $v$ are hyperbolic elements of $G$ then the algorithm finds a conjugating element for $u$ and $v$. The time complexity of the algorithm in this case is $O(\bar{L}^2C^{(par)}_w(\bar{L})\log \bar{L})$.
\item If $u$ and $v$ are parabolic elements of $G$ then the algorithm needs also solution(s) to the conjugacy problem for each one of the parabolic subgroups of $G$ to find a conjugating element for $u$ and $v$. If $O(C^{(par)}_c(n))$ is the (maximum) time complexity of these solutions then the time complexity of the algorithm is $\max\{O(C^{(par)}_c(\bar{L})),O(C^{(par)}_{search}(\bar{L})),O(\bar{L}C^{(par)}_w(\bar{L}))\}$.
\end{enumerate}
\end{thm}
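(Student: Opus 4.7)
The plan is to follow the architecture of Theorem~\ref{thm:algorithm} while carefully recording the conjugators produced along the way. The first step, common to both parts, is to apply Theorem~\ref{thm:parabolic} to each of $u$ and $v$. This classifies them as hyperbolic or parabolic at cost $O(\bar L\,C_w^{(par)}(\bar L))$, and in the parabolic case also returns $q_u\in F(S_i)$, $q_v\in F(S_j)$ and (from the curve-shortening of Proposition~\ref{cor:Lemma431}) short words $c_u,c_v\in G$ with $q_u=c_u^{-1}uc_u$, $q_v=c_v^{-1}vc_v$; in the hyperbolic case it returns cyclic relative $(8\delta+1)$-local geodesics $\alpha,\beta$ representing cyclic conjugates of $u,v$, together with the conjugators $a,b$ with $\bar w_u=a^{-1}ua$ and $\bar w_v=b^{-1}vb$. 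All conjugators returned at the end will be the composition of these ``setup'' words with the conjugator found in the main step.

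For~(1), branch on $L=\max\{l_\alpha,l_\beta\}$. If $L\geq 86\delta+3$, run the procedure of Theorem~\ref{thm:alglongelts}: it already sweeps all $x\in\mathcal L_4$ against all cyclic rotations of $\alpha,\beta$ and returns the first $x$ with $x\tilde u x^{-1}\tilde v^{-1}=1$, which exists by Theorem~\ref{thm:conjugatorforlongelts}. If $L<86\delta+3$, invoke Lemma~\ref{lem:replacebygeodesics}: first test exhaustively whether some $g\in\mathcal L_2$ conjugates a cyclic permutation of $\bar w_u$ to a cyclic permutation of $\bar w_v$; if not, compute relative geodesic representatives $\bar z_u,\bar z_v\in\mathcal L_8$ sharing the endpoints of $\alpha,\beta$, locate the pair $(\bar z_u,\bar z_v)$ in the precomputed table $\mathcal L_{88}$, and read the conjugator $g_{\bar z_u\bar z_v}\in\mathcal L_{12}$. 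Compose with $a,b$ and a cyclic shift to return a conjugator for the original $u,v$. Each substitution is $O(C_w^{(par)}(\bar L)\log\bar L)$ via Theorem~\ref{thm:Farb}, and there are $O(\bar L^2)$ of them, giving the claimed bound $O(\bar L^2\,C_w^{(par)}(\bar L)\log\bar L)$.

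For~(2), having obtained $q_u\in F(S_i)$ and $q_v\in F(S_j)$, first handle the case $i=j$: run the conjugacy algorithm of $P_i$ on $(q_u,q_v)$; if they are conjugate in $P_i$, invoke the conjugacy-search algorithm of $P_i$ to obtain $h\in P_i$ with $q_v=hq_uh^{-1}$, and return $c_v h c_u^{-1}$. Otherwise (including $i\neq j$), appeal to Lemma~\ref{lem:conjugateparabolic}: since $u,v$ are conjugate in $G$ but not in any single parabolic subgroup, there exist $p_u\in[q_u]_{P_i}\cap B_i$ and $p_v\in[q_v]_{P_j}\cap B_j$. Locate them by testing, for each of the constantly many $b\in B_i$ (respectively $B_j$), whether $b$ is conjugate to $q_u$ (respectively $q_v$) via the parabolic conjugacy algorithm; this costs $O(C_c^{(par)}(\bar L))$. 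Then use the conjugacy-search algorithms of $P_i$ and $P_j$ to obtain $h_u,h_v$ with $p_u=h_u^{-1}q_u h_u$ and $p_v=h_v^{-1}q_v h_v$; by Proposition~\ref{parabolicconjclasses} the pair $(p_u,p_v)$ lies in $\mathcal L_{11}$, and the precomputed $g_{p_up_v}\in\mathcal L_7$ conjugates $p_u$ to $p_v$. Return $c_v h_v g_{p_up_v} h_u^{-1} c_u^{-1}$. The dominant costs are the parabolic-subgroup calls and the initial classification, matching the stated maximum.

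The main obstacle I anticipate is not finding the conjugator at any single stage---all the machinery for that is already in place---but rather the bookkeeping: at each step (curve shortening of $u$ to $\bar w_u$, cyclic rotation, passage from $\bar w_u$ to its geodesic representative $\bar z_u$, and, in the parabolic case, reduction of $q_u$ through $P_i$ to a ``very short'' $p_u\in B_i$) the ``current'' word we work with is a distinct representative of $[u]_G$, and a witness of conjugation must be retained and composed in the correct order. One must verify that this composition stays polynomial in $\bar L$ so that both returning the conjugator and optionally verifying it falls within the quoted complexity. A secondary subtlety in~(2) is cleanly separating the case where $q_u,q_v$ are already conjugate inside a common parabolic subgroup from the case where they are merely conjugate in $G$; it is this dichotomy that forces the appearance of both $C_c^{(par)}$ and $C_{search}^{(par)}$ in the complexity bound.
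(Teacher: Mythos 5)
Your proposal is correct and follows essentially the same route as the paper: classify $u,v$ via Theorem~\ref{thm:parabolic}, reuse the decision algorithm of Theorem~\ref{thm:algorithm}(2) (long case via $\mathcal{L}_4$, short case via $\mathcal{L}_2$, $\mathcal{L}_{88}$ and $\mathcal{L}_{12}$) for hyperbolic elements, and in the parabolic case reduce to $q_u,q_v$, then to $p_u\in[q_u]_{P_i}\cap B_i$, $p_v\in[q_v]_{P_j}\cap B_j$ and the precomputed conjugators in $\mathcal{L}_7$ from Proposition~\ref{parabolicconjclasses}, composing all witnesses at the end. The paper's proof is merely more condensed, citing Theorem~\ref{thm:algorithm}(2) and Proposition~\ref{cor:Lemma431} for the conjugators $a_u,a_v$ rather than re-expanding those algorithms, and your explicit bookkeeping of the composed conjugator $c_v h_v g_{p_up_v} h_u^{-1} c_u^{-1}$ matches what the paper leaves implicit.
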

\begin{proof} An algorithm to determine whether a given element $x\in G$ is hyperbolic or parabolic is described in Theorem~\ref{thm:parabolic}.

(1) Recall that by Convention~\ref{convention51}(iv), we assume that we have already computed relative cyclic $(8\delta+1)$-local geodesics $\alpha$ and $\beta$ with the labels $\bar{w}_u=lab(\alpha)$ and $\bar{w}_v=lab(\beta)$, such that $\bar{w}_u\in [u]_G$ and $\bar{w}_v\in [v]_G$. By the proof of Proposition~\ref{cor:Lemma431}, the procedure provides elements $a_u$ and $a_v$ such that $u=a_u \bar{w}_u a_u^{-1}$ and $v=a_v \bar{w}_v a_v^{-1}$. The algorithm described in the proof of Theorem~\ref{thm:algorithm}(2) provides an element $g\in G$ such that $\bar{w}_v=g\bar{w}_ug^{-1}$. We conclude that $v=(a_v g a_u^{-1}) u(a_v g a_u^{-1})^{-1}$. The complexity in this case is $O(\bar{L}^2C^{(par)}_w(\bar{L})\log \bar{L})$, that of the algorithm from Theorem~\ref{thm:algorithm}(2); the complexity of the algorithm from  Proposition~\ref{cor:Lemma431} is lower.

(2) By Theorem~\ref{thm:parabolic}, $u$ and $v$ are conjugate to parabolic elements $q_u$ and $q_v$, correspondingly; possibly, $\bar{w}_u=q_u$ and $\bar{w}_v=q_v$. If the latter equalities fail, which means that  $\bar{w}_u$ and $\bar{w}_v$ are written as hyperbolic words, then conjugating elements for $\bar{w}_u$ and $q_u$ as well as for $\bar{w}_v$ and $q_v$ can be found using the algorithm from  Theorem~\ref{thm:parabolic}.

Now, if $q_u,q_v\in F(S_i)$ for some $i$ then the solution to the conjugacy problem in $P_i$ allows us to determine whether or not $q_u$ and $q_v$ are conjugate in $P_i$. If they are conjugate then we can find a conjugating element for $q_u$ and $q_v$ in $P_i$, using an available algorithm for the conjugacy search problem in $P_i$. If $q_u\in P_i$ and $q_v\in P_j$ are not conjugate in a parabolic subgroup then, by Lemma~\ref{lem:conjugateparabolic}, there are elements $p_u\in [q_u]\cap B_i$ and $p_v\in [q_v]\cap B_j$. Using the solution to the conjugacy problem (finitely many times) and the solution to the conjugacy search problem in the parabolic subgroups, we find $p_u$, $p_v$ and corresponding conjugating elements. Whereas the elements $p_u$ and $p_v$ may not be unique, $(p_u,p_v)\in \mathcal{L}_{11}$ and a conjugating element for $p_u$ and $p_v$ is in $\mathcal{L}_7$ in any case, according to Proposition~\ref{parabolicconjclasses}.

We apply the algorithm from Theorem~\ref{thm:parabolic} and then we use solutions to the conjugacy problem and to the conjugacy search problem in parabolic subgroups. Each algorithm is applied finitely many times; the estimate for the complexity follows.
\end{proof}
% % % % % % % % % % % % % % % % % % % % % % % % % % %
The following theorem is immediate from Theorem~\ref{thm:Search}.
\begin{thm}\label{thm:conjsearch} 
There is an algorithm which takes as input all of the data listed in Convention~\ref{convention51}, solution to the conjugacy problem in the parabolic subgroups and the information that $u$ and $v$ are conjugate in $G$, and finds a conjugating element for $u$ and $v$.
If in parabolic subgroups the word problem can be solved in time $O(C^{(par)}_w(n))$, the conjugacy problem can be solved in time $O(C^{(par)}_c(n))$, and the conjugacy search problem can be solved in time $O(C^{(par)}_{search}(n))$ then the time complexity of the algorithm is 
%\newline 
$T_{search}(\bar{L})=\max\{O(C^{(par)}_{c}(\bar{L})),O(C^{(par)}_{search}(\bar{L})),O(\bar{L}^2C^{(par)}_w(\bar{L})\log \bar{L})\}$.
\end{thm}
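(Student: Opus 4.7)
The plan is to simply dispatch to the two cases already handled by Theorem~\ref{thm:Search} and observe that their complexities combine into the stated maximum. First I would run the algorithm of Theorem~\ref{thm:parabolic} on each of $w_u$ and $w_v$ to decide, in time $O(\bar{L}C^{(par)}_w(\bar{L}))$, whether each given element is hyperbolic or parabolic. Since the hypothesis guarantees that $u$ and $v$ are conjugate in $G$, they must be of the same type (a hyperbolic element is never conjugate to a parabolic one), so the classification step produces a consistent answer.

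If both $u$ and $v$ turn out to be hyperbolic, then I would invoke Theorem~\ref{thm:Search}(1), which produces a conjugating element in time $O(\bar{L}^2 C^{(par)}_w(\bar{L}) \log \bar{L})$. If both turn out to be parabolic, I would invoke Theorem~\ref{thm:Search}(2), which additionally uses the input solutions to the conjugacy problem and to the conjugacy search problem in the parabolic subgroups, and runs in time $\max\{O(C^{(par)}_c(\bar{L})),O(C^{(par)}_{search}(\bar{L})),O(\bar{L}C^{(par)}_w(\bar{L}))\}$. In either case a conjugating element is returned.

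Taking the maximum of the running times across the classification phase and the two possible branches gives
\[
T_{search}(\bar{L})=\max\{O(C^{(par)}_{c}(\bar{L})),O(C^{(par)}_{search}(\bar{L})),O(\bar{L}^2C^{(par)}_w(\bar{L})\log \bar{L})\},
\]
because $O(\bar{L}C^{(par)}_w(\bar{L}))$ from the classification and the parabolic branch is absorbed by the $O(\bar{L}^2C^{(par)}_w(\bar{L})\log \bar{L})$ term from the hyperbolic branch. There is no genuine obstacle here since all the analytic work was done in Theorem~\ref{thm:Search}; the only thing to verify is that we do not need any further preprocessing beyond what is already accounted for in Convention~\ref{convention51}, and that the classification step cleanly disposes of the mixed case, which it does by the conjugacy hypothesis.
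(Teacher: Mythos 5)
Your proposal is correct and matches the paper, which simply notes that this theorem is immediate from Theorem~\ref{thm:Search}: one classifies $u$ and $v$ via Theorem~\ref{thm:parabolic} and dispatches to the appropriate case of Theorem~\ref{thm:Search}, with the stated bound obtained by taking the maximum over the branches. Your additional observation that the $O(\bar{L}C^{(par)}_w(\bar{L}))$ terms are absorbed by $O(\bar{L}^2C^{(par)}_w(\bar{L})\log\bar{L})$ is exactly the bookkeeping the paper leaves implicit.
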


As another application, we have the following generalization of Proposition~\ref{parabolicconjclasses}.

\begin{thm}\label{thm:boundedconjclasses} Let $G$ be a group hyperbolic relative to $\mathcal{P}=\{P_1,\dots,P_m\}$, and let $S=S_0\cup S_1\cup\dots\cup S_m$ be a generating set for $G$ such that $P_i=\left\langle S_i\right\rangle $, for all $i=1,2,\dots,m$. Suppose solutions to the word problem and to the conjugacy problem in each one of the parabolic subgroups are given. Then there is an algorithm that, given $u\in F(S)$ and a positive integer $N$, computes the ball $B_N$ of radius $N$ in the Cayley graph $\Gamma=\Gamma(G;S)$ and the bounded conjugacy class $[u]_G\cap B_N$. Moreover, the algorithm computes a conjugating element for each pair $x,y\in [u]_G\cap B_N$.
\end{thm}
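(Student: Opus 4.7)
The plan is to reduce the theorem to repeated invocations of the algorithms from Theorem~\ref{thm:algorithm} and Theorem~\ref{thm:Search}, together with a brute-force enumeration of the ball $B_N$. There is no new geometric content; the statement essentially packages previous results.

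First I would compute the ball $B_N$. Since $S$ is finite, the set $W_N \subseteq F(S)$ of freely reduced words of length at most $N$ is finite and can be listed explicitly. Using the hypothesized solutions to the word problem in the parabolic subgroups together with Theorem~\ref{thm:Farb} (and, if the input is an arbitrary finite presentation, Theorem~\ref{prop:precomputation} to first extract a relative Dehn presentation and the associated data), the word problem in $G$ is solvable. I would then partition $W_N$ by the equivalence $w_1 \sim w_2 \iff w_1 w_2^{-1} =_G 1$, pick one representative from each class, and discard duplicates. Every element of $B_N$ admits a representing word of length at most $N$ and every such word represents an element of $B_N$, so the resulting list of representatives is in bijection with $B_N$; each representative comes equipped with a word in $F(S)$ of length at most $N$.

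Second, to compute $[u]_G \cap B_N$, I would run the conjugacy problem algorithm of Theorem~\ref{thm:algorithm} on each pair $(u, w)$ as $w$ ranges over the representatives of $B_N$ produced above. This algorithm uses precisely the input supplied by the hypothesis (word problem and conjugacy problem in the parabolic subgroups), and its complexity is irrelevant here since we only need existence of an algorithm. Collecting those $w$ for which the answer is affirmative yields $[u]_G \cap B_N$.

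Third, for each such $w$ I would apply Theorem~\ref{thm:Search} to produce an explicit conjugating element. When $u$ (equivalently $w$) is hyperbolic, Theorem~\ref{thm:Search}(1) returns a conjugator without needing anything beyond the word problem in the parabolic subgroups. When $u$ and $w$ are parabolic, Theorem~\ref{thm:Search}(2) additionally requires the conjugacy search problem in the relevant parabolic subgroup $P_i$; this is the one place where the hypothesis does not literally supply what is needed. The obstacle is mild, however: given the word problem in $P_i$ and the conjugacy problem in $P_i$, the conjugacy search problem in $P_i$ is solvable by the standard enumeration argument recalled at the start of Section~\ref{sec:Alg}. Namely, once the conjugacy problem has certified that a conjugator exists in $P_i$, enumerate the elements of $P_i$ as words in $S_i$ and test each candidate with the word problem; termination is guaranteed. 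Feeding this procedure into Theorem~\ref{thm:Search}(2) produces a conjugator for every pair of parabolic elements in $[u]_G \cap B_N$. Assembling the three steps gives the required algorithm.
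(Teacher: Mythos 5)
Your proposal is correct and follows essentially the same route as the paper: compute $B_N$ using the solvability of the word problem in $G$ (Theorem~\ref{thm:Farb}), decide conjugacy of $u$ against each element of the finite ball via Theorem~\ref{thm:algorithm}, and extract conjugators via Theorem~\ref{thm:Search}/Theorem~\ref{thm:conjsearch}. Your worry about the conjugacy search problem in the parabolic subgroups is already resolved in the paper exactly as you resolve it --- the discussion preceding Theorem~\ref{thm:Search} notes that this problem is always solvable by enumeration given the word problem, which is why Theorem~\ref{thm:conjsearch} only takes the word and conjugacy problems in the $P_i$ as input.
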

\begin{proof} By \cite[Theorem 3.7]{Farb} (see Theorem~\ref{thm:Farb}), $G$ has solvable word problem, so that the ball $B_N$ can be computed. Since the ball $B_N$ is finite, the bounded conjugacy class $[u]_G\cap B_N$ can be computed using the algorithm from Theorem~\ref{thm:algorithm}. 
Conjugating elements can be computed by Theorem~\ref{thm:conjsearch}.
\end{proof}

%%%%%%%%%%%%%%%%%%%%%%%%%%%%%%%%%%%%%%%%%%%%%%%%%%%%%%%%%%%%%%%%%%%%%%%%
%%                  BIBLIOGRAPHY
%%%%%%%%%%%%%%%%%%%%%%%%%%%%%%%%%%%%%%%%%%%%%%%%%%%%%%%%%%%%%%%%%%%%%%%%
\bibliographystyle{plain}  
%\addcontentsline{toc}{chapter}{Reference}
%\begin{thebibliography}{99}
\bibliographystyle{plain}

%\bibliography
\bibliography{References}
%\bibliography{C:\Documents and Settings\inna\My Documents\preprints\ZnfreeCAT0\Znfree}
%\begin{thebibliography}{10}

%\end{thebibliography}

\end{document}